\numberwithin{equation}{section}
\newtheorem{theorem}{Theorem}[section]
\newtheorem{lemma}[theorem]{Lemma}
\newtheorem{proposition}[theorem]{Proposition}
\newtheorem{assumption}{Assumption}
\theoremstyle{definition}
\theoremstyle{remark}
\newtheorem{remark}{Remark}[section]
\def \Th{\Theta}
\def\eps{\varepsilon}
\def\E{{\bf E}}
 \def\tv{\mathrm{TV}}
\def\cA{\mathcal{A}}
\def\cP{\mathcal{P}}
\def\cX{\mathcal{X}}
\def\d{{\mathrm{d}}}
\def\mv{{\mathrm{MV}}}
\def\st{{\mathrm{ST}}}
\def\var{{\mathrm{Var}}}
\def\E{{\mathbb{E}}}
\def\N{{\mathbb{N}}}
\def\P{\mathbb{P}}
\def\R{{\mathbb R}}
\newcommand{\citep}{\cite}
\newcommand{\citet}{\cite}
\DeclareMathOperator*{\logit}{logit}
\DeclareMathOperator*{\softmax}{softmax}
\newcommand{\lc}
{\mathrel{\raise2pt\hbox{${\mathop<\limits_{\raise1pt\hbox
{\mbox{$\sim$}}}}$}}}
\newcommand{\gc}
{\mathrel{\raise2pt\hbox{${\mathop>\limits_{\raise1pt\hbox{\mbox{$\sim$}}}}$}}}
\newcommand{\ec}
{\mathrel{\raise2pt\hbox{${\mathop=\limits_{\raise1pt\hbox{\mbox{$\sim$}}}}$}}}
\def\bb{\begin{equation}} \def\ee{\end{equation}}
\def\bbn{\begin{equation*}} \def\een{\end{equation*}}
\def\beqn{\begin{eqnarray}}  \def\eqn{\end{eqnarray}}
\def\beqnx{\begin{eqnarray*}} \def\eqnx{\end{eqnarray*}}
\def\bn{\begin{enumerate}} \def\en{\end{enumerate}}
\def\bd{\begin{description}} \def\ed{\end{description}}
\begin{document}

\title{Model-free policy gradient for discrete-time mean-field control}

\author{
Matthieu Meunier\thanks{Mathematical Institute, University of Oxford, Oxford OX2 6GG, UK ({\tt matthieu.meunier@maths.ox.ac.uk}). This author gratefully acknowledges the support of the Paul Lévy initiative, which enabled a four-month visit to École Polytechnique  during which part of this work was completed. This author is also supported by the EPSRC Centre for Doctoral
Training in Mathematics of Random Systems: Analysis, Modelling and Simulation (EP/S023925/1).
} 
\and
Huyên Pham\thanks{CMAP, École Polytechnique, Palaiseau 91120, France ({\tt huyen.pham@polytechnique.edu}). 
This author is supported by the Chair ``Financial Risks'', by FiME (Laboratory of Finance and Energy Markets), and the EDF–CACIB Chair ``Finance and Sustainable Development''.
}
\and
Christoph Reisinger\thanks{Mathematical Institute, University of Oxford, Oxford OX2 6GG, UK ({\tt  
christoph.reisinger@maths.ox.ac.uk}).
This author is supported by the EPSRC grant EP/Y028872/1, Mathematical Foundations of Intelligence: An ``Erlangen Programme'' for AI.
} 
}

\date{}
\maketitle

\begin{abstract}
We study model-free policy learning for discrete-time mean-field control (MFC) problems
with finite state space and compact action space.
In contrast to the extensive literature on 
value-based methods for MFC, policy-based approaches remain largely unexplored due to the intrinsic dependence of transition kernels and rewards on the evolving population state distribution, which
prevents the direct use of likelihood-ratio estimators of policy gradients from classical single-agent reinforcement learning.
We introduce a novel perturbation scheme on the state-distribution flow and prove that the gradient of the resulting perturbed value function converges to the true policy gradient as the perturbation magnitude vanishes.
This construction yields  a fully model-free estimator based solely on simulated trajectories and an auxiliary estimate of the sensitivity of the state distribution. 
Building on this framework, we develop MF-REINFORCE, a model-free policy gradient algorithm for MFC, and establish  explicit quantitative bounds on its bias and mean-squared error. 
Numerical expe\-riments on representative mean-field control tasks demonstrate the effectiveness of the proposed approach.  \end{abstract}

\medskip

\noindent
\textbf{Key words:} Mean-field control; Policy gradient methods; Model-free reinforcement learning; Markov decision process; Perturbation analysis.

%

\vspace{5mm}  

\noindent
\textbf{AMS subject classifications:} 93E20; 60J10; 68T05;  60K35; 49N80. 

%
\medskip

\section{Introduction}\label{sec:intro}

Designing algorithms which optimally control large populations of cooperative agents has recently become an important problem due to the breadth and depth of applications. In the face of the complexity of modelling individual interactions between each pair of agents, mean-field control provides a rigorous model particularly suited for problems with many agents interacting in a symmetrical way: instead of considering all possible interactions between every pair of players, one lets the number of agents go to infinity and tries to solve a limiting problem for which it suffices to study the interaction between a typical agent and the probability distribution of its state. 
An important and practical research direction for such decision-making problems consists in model-free reinforcement learning (RL) algorithms.
RL algorithms are based on two elementary approaches: \textit{value-based} learning algorithms, such as $Q$-learning (\citet{watkins1992q}), where one tries to learn a map which allows one to compare the optimal expected reward for different actions and subsequently learn a policy from this map, and \textit{policy-based} algorithms, also known as policy gradient methods, such as REINFORCE (\citet{williams1992simple}), which directly try to optimize the value function with respect to the parameters of a policy. These two approaches are of great importance and are combined in modern deep reinforcement learning algorithms.
The study of such approaches tailored to mean-field control problems has recently received more attention, with many works developing value-based algorithms.
However, there is a notable lack of study of model-free policy-based methods for this setting in the literature.

This paper fills this gap by introducing the first policy gradient algorithm for mean-field control problems in discrete time. As in the classical REINFORCE algorithm for single-agent problems, our algorithm is entirely model-free and can be performed by simply observing trajectories of the system.

\subsection{Outline of Main Results}

In this section, we provide an informal description of the problem at hand and present our main contributions.
We consider a mean-field control problem in discrete time without common noise and time horizon $T \in \N$. We assume that the state space $\cX$ is finite and that the action space $\cA$ is compact and Polish.
Given an admissible control $\alpha = (\alpha_t)_{t = 0}^{T-1}$, the $\cX$-valued state follows the dynamics 
\[
X_0 = \xi, \quad X_{t + 1} \sim P( \cdot \mid X_t, \alpha_t, \mathbb P_{X_t}),
\]
where $\mathbb P_{X_t} \in \mathcal P (\mathcal X)$ denotes the law of $X_t$. We denote by $P(\cdot \mid x,a,\mu)$ the underlying probability kernel. Consider a running reward $r: \mathcal X \times \mathcal A \times \mathcal P (\mathcal X) \rightarrow \R$ and a terminal reward $g: \mathcal X \times \mathcal P (\mathcal X) \rightarrow \R$, which are both assumed to be measurable and bounded.
The goal is to maximize the following objective, for a given initial condition $\xi$ $\sim$ $\mu_0$, 
\[
\mathbb E_{\xi \sim \mu_0} \left[\sum_{t = 0}^{T-1} r(t,X_t, \alpha_t, \mathbb P_{X_t}) + g(X_T, \mathbb P_{X_T})\right],
\]
over a set of admissible controls. For now, we consider randomized controls of feedback form, meaning that the set of admissible controls is parametrized by 
\[
\Pi \coloneq \{ \pi: \{0,\ldots,T-1\} \times \cX \times \mathcal P (\mathcal X) \rightarrow \mathcal P (\mathcal A) \text{ measurable} \}.
\]
Hence a control $\alpha$ is admissible if there exists $\pi \in \Pi$ such that $\alpha_t \sim \pi (\cdot \mid t,X_t, \mathbb P_{X_t})$ for all $t \in \{0, \ldots, T-1\}$.
Therefore, we seek to maximize
\begin{equation}
\label{eq:value-function}
V^\pi (\mu_0) \coloneq \mathbb E^{\pi}_{\xi \sim \mu_0} \left[ \sum_{t = 0}^{T - 1} r(X_t, \alpha_t, \mathbb P_{X_t}) + g(X_T, \mathbb P _{X_T})\right],
\end{equation}
where $\mathbb E^{\pi}$ refers to $\alpha = (\alpha_t)$ being generated by $\pi$.

Our goal is to develop a model-free policy gradient algorithm to solve this mean-field control problem. Consider a set of parametrized policies $\{\pi_{\theta}: \theta \in \Theta \}$ with $\Th \subseteq \R^D$ given, e.g., by a neural network with trainable parameters. 
Then we want to find an estimator for $\nabla_{\theta} V^{\pi_{\theta}}(\mu_0)$ that can easily be computed in a reinforcement learning setting, e.g., by generating trajectories of $X$ with a simulator.
In continuous time, this problem has been studied by \citet{frikha2025actor}, where the It\^o formula along flows of measures is used to write a policy gradient of the value function that can be estimated in a model-free setting.
Our main contributions are as follows:
\begin{itemize}
    \item We rigorously derive a policy gradient formula for the value function in Proposition \ref{proposition:exact-policy-gradient-formula}. This result highlights the computational challenges stemming from the mean-field dependence of the transition kernel and the reward functions. In particular, contrary to the single-agent problem, it is not possible to directly obtain a model-free estimator of the gradient using a likelihood ratio trick.
    \item We introduce a perturbation scheme, along with a perturbed value function, and show in Theorem \ref{theorem:convergence-perturbed-gradient} that the gradient of the perturbed value function converges to the true value function as the size of the perturbation goes to $0$. In particular, the gradient of the perturbation is amenable to model-free estimation, but requires an auxiliary estimation procedure of the gradient (of the logits) of the state distribution.
    \item We present a model-free algorithm to estimate the gradient of the value function, which we call MF-REINFORCE, based on the formula for the gradient of the perturbed value function in Algorithm \ref{alg:reinforce}. We provide a detailed analysis and quantitative bounds on the bias and mean squared error of the newly introduced estimator in Theorems \ref{theorem:main-bias-result} and \ref{theorem:main-mse-result} respectively, based on the size of the perturbation and number of trajectories sampled.
    \item We present numerical experiments of MF-REINFORCE on a few problems in Section \ref{section:numerical-experiments}.
\end{itemize}

\subsection{Related Works}

\paragraph{Policy Gradient Methods in Multi-Agent Problems}

Previous works tackle the challenge of learning optimal policies in problems with many agents, both in the mean-field and finite-player settings.
The work of \citet{carmona2019linear} investigates policy gradient methods for linear-quadratic MFC in discrete time.
The recent work of \citet{soner2025learning} studies a (model-based) policy learning algorithm for MFC by simulating a corresponding $N$-particle system.
Actor--critic algorithms for asymptotic mean-field games and control problems have been introduced and analyzed by \citet{angiuli2023deep} and \citet{fouque2025convergence}. 
Surprisingly enough, continuous-time problems have received more attention (see, e.g., \citet{fouque2020deep}), due to the explicit policy gradient formula one can obtain via Itô's formula along flows of measures (\citet{frikha2025actor}), where policy gradient algorithms have been developed using moment neural networks (\citet{pham2025actor}), and for linear-quadratic problems (\citet{wang2021global, frikha2024full}).
From the perspective of multi-agent reinforcement learning (MARL), \citet{yang2018mean} use mean-field symmetry to design policy gradient methods, \citet{mondal2022approximation} propose a natural policy gradient method for heterogeneous problems based on mean-field control.
There are many works studying policy gradient algorithms in the context of games, that is when agents compete without a central planner, both in the finite player (see, e.g., \citet{hambly2023policy, giannou2022convergence}) and mean-field (see, e.g., \citet{subramanian2019reinforcement, fu2020actor, guo2023general, liang2024actor}) settings.

\paragraph{Value-Based Methods in the Mean-Field Setting}
Many previous works have studied value-based methods, such as $Q$-learning, for MFC. This line of work stems from establishing the dynamic programming principle for the MFC problem (\citet{lauriere2016dynamic, pham2017dynamic, gu2023dynamic}), or an equivalent mean-field Markov decision process (MFMDP) (\citet{motte2022mean,bauerle2023mean, carmona2023model}). Given that equivalent problem, which typically consists of a Markov decision process on augmented state and action spaces, one can define an augmented or integrated $Q$-function and perform $Q$-learning (\citet{angiuli2022unified, gu2023dynamic, carmona2023model}), either by discretizing the augmented spaces or using function approximation. This line of work has proven useful to cooperative multi-agent reinforcement learning (\citet{gu2021mean, gu2025mean}). Value-based methods have also been explored in continuous-time mean-field control settings (\citet{wei2025continuous}).
Our approach is different from these previous works since we do not rely on an equivalent MFMDP, and we do not directly learn the optimal value function.

\paragraph{Perturbation Methods in Reinforcement Learning}
One of the most popular perturbation methods in reinforcement learning consists in adding entropy regularization to the cost functional (see, e.g., \citet{ziebart2008maximum,fox2016tamingnoisereinforcementlearning,schulman2017proximalpolicyoptimizationalgorithms,haarnoja2018sac,geist19atheory}), which has proven effective in mean-field settings (\citet{cui2021approximately, guo2022entropy, anahtarci2023q}).
In our work, we take a different approach by directly perturbing the state probability distribution. More generally, perturbation methods are a key concept in model-free / zeroth-order optimization, and such methods have previously been used in a RL context.
Early works such as simultaneous perturbation stochastic approximation (SPSA) (\citet{spall1992multivariate}) propose to optimize a loss function by stochastic perturbation of the parameters, an idea further developed in parameter-exploring policy gradient (\citet{sehnke2010parameter}) and in the context of black-box policy search (\citet{salimans2017evolution}). These approaches typically perturb the parameters of the policy, rather than the underlying probability distributions.
Distributional perturbations have also appeared in the analysis of robustness in RL (\citet{liu2022distributionally, wang2024samplecomplexityvariancereduceddistributionally}). However, to the best of our knowledge, no existing work investigates perturbations of the state-distribution flow itself, nor uses such perturbations to derive a model-free, policy-gradient estimator tailored to mean-field control.

\subsection{Notations and Paper Structure}

For a compact Polish space $E$, the space of probability distributions $\cP (E)$ is equipped with the total variation distance denoted by $d_\tv(\cdot, \cdot)$.
For a vector $x = (x_1, \ldots, x_D) \in \R^D$, we write $\| x \|_{\infty} = \sup_{i =1, \ldots, D } |x_i|$.
For a continuous bounded function $f: \mathcal E \rightarrow \R^D$, where $\mathcal E$ is a topological space, we write $\|f\|_\infty = \sup_{e \in \mathcal E} \| f(e) \|_\infty$.
For a matrix $A \in \R^{m \times n}$, we denote by $\| A \|$ its operator norm induced by the $1$-norm on $\R^m$ and $\R^n$, i.e., $\| A \| = \max_{1 \leq j \leq n } \sum_{i = 1}^m |A_{ij}|$.
In Section \ref{section:pg-mfrl}, we state the precise working assumptions and present the perturbation technique, we derive the gradient of the perturbed value function and state a convergence result towards the gradient of the true value function.
In Section \ref{section:pg-algorithm}, we present the algorithmic details to implement an estimator of the gradient of the perturbed value function and we give quantitative bounds on the bias and mean-squared error (MSE) of the resulting estimator. 
Section \ref{section:numerical-experiments} is dedicated to numerical experiments. The proofs of the results stated in Sections \ref{section:pg-mfrl} and \ref{section:pg-algorithm} are given in Section \ref{section:proofs}.

\section{Policy Gradient Approximation via Perturbation for Mean-Field Reinforcement Learning}
\label{section:pg-mfrl}

In this section, we first present the logit representation of discrete measures as an important tool for dealing with the measure dependence in the subsequent computations. 
Then, we state the main working assumption and present the exact policy gradient formula.
Finally, we show convergence of a perturbed, computable gradient to the true gradient.

Throughout this section and the rest of the paper, we consider a discrete-time mean-field control problem with state space $\cX$, action space $\cA$, running reward $r(x,a,\mu)$, finite horizon $T \in \N$, terminal reward $g(x, \mu)$ and transition kernel $P(\d x^\prime \mid x, a, \mu)$. For a given initial condition $\xi$ such that $\mathcal L (\xi) = \mu$, the goal is to maximize the value function $V^\pi(\mu)$ defined in \eqref{eq:value-function} over $\pi \in \{ \pi_\theta: \theta \in \Theta\}$ where $\Theta \subseteq \R^d$ is the set of admissible parameters.

\subsection{Parametrization of State Distributions by Logits}
\label{subsection:logits}

Defining a perturbation scheme on $\cP(\cX)$ is a priori non-trivial since $\cP(\cX)$ is not a vector space.
Therefore, we choose to parametrize probability distributions by the vector of log-probabilities, also known as logits. Then, one can perturb the logits linearly and map the perturbed logits back to a probability distribution with a softmax operation.
Assume that  $| \cX | = d < \infty$. Then we can view $\mathcal P (\cX)$ as
\[
\{ \mu \in \R^d: \quad \mu_i \geq 0\:\: \forall i, \quad \sum_{i = 1}^d \mu_i = 1\}.
\]
For $l \in \R^{d}$, we can define the following probability vector 
$$
\softmax (l) \coloneq \left( \frac{e^{l_1}}{\sum_{i=1}^{d} e^{l_i}}, \ldots, \frac{e^{l_{d}}}{\sum_{i=1}^{d} e^{l_i}}  \right) \in \R^{d}.
$$
 The inverse transformation, for a probability vector $\mu \in \mathcal{P}(\cX)$ such that $\mu_i > 0$ for all $i$, is
$$
\logit(\mu) = \left(\log \mu_1, \ldots, \log \mu_{d} \right).
$$
Hence we can parametrize any distribution on $\cX$ with non-zero mass everywhere by a vector of logits in $\R^{d}$.
 Let $ \cP(\cX)^* \coloneq \{ \mu \in \mathcal P (\cX): \mu(x) > 0, \: \forall x \in \cX \}$. 

Since we are interested in differentiating the value function, we need to make the notion of derivative in the measure argument precise. We make the following set of standing assumptions throughout the paper.

\begin{assumption}
\label{assumption:finite-state-space}
The state space $\cX = \{x^{(1)}, \ldots, x^{(d)} \}$ is finite and the action space $\cA$ is compact and Polish. 
Moreover, there exist functions $\tilde r: \cX \times \cA \times \R^d \to \R$, $\tilde g: \cX \times \R^d \to \R$, $\tilde P: \cX \times \cX \times \cA \times \R^d \to \R$, $p: \Theta \times \cA \times \{0, \ldots, T-1\} \times \cX \times \cP (\cX) \to \R$, $\tilde p: \Theta \times \cA \times \{0, \ldots, T-1\} \times \cX \times \R^d \to \R$ such that
    \begin{itemize}
        \item for any $x,x^\prime \in \cX, a \in \cA, l \in \R^d$, we have 
        $$
        \begin{aligned}
        &r(x,a,\softmax (l)) = \tilde r(x,a,l), \quad 
        g(x,\softmax (l)) = \tilde g(x,l), \\
        & P(x^\prime \mid x,a, \softmax(l)) = \tilde P(x^\prime \mid x, a, l).
        \end{aligned}
        $$ 
        \item $(x,a,l) \mapsto \tilde r(x,a,l)$ is bounded in $[-M_0, M_0]$ where $0 \leq M_0 < \infty$, continuous in $a \in \cA$ and continuously differentiable in $l \in \R^d$, 
        similarly $(x,l) \mapsto \tilde g(x,l)$ is bounded in $[-M_0,M_0]$ and continuously differentiable in $l \in \R^d$;
        \item the extended transition probability $(x^\prime, x, a, l) \mapsto \tilde P (x^\prime \mid x, a, l)$ is continuous in $a \in \cA$ and continuously differentiable in $l \in \R^d$;
        \item the parametrized policy $\pi_\theta(\d a \mid t,x, \mu)$ admits a density $a$ $\mapsto$ $p(\theta, a,t,x,\mu)$ with respect to a finite reference measure $\nu_{\cA}$, and we denote $| \cA | = \nu_\cA (\cA)$. We have $p(\theta,a,t,x,\softmax l) = \tilde p (\theta, a, t,x, l)$, and $\tilde p$ is differentiable in $\theta$, continuous in $a \in \cA$ and continuously differentiable in $l \in \R^d$.
         Moreover, $\nabla_\theta \log p$ is bounded, i.e., there exists $C > 0$ such that 
         $$
         \sup_{(\theta, a, x, \mu) \in \Theta \times \cA \times \cX \times \cP (\cX)^*}\| \nabla_\theta \log p (\theta, a, t,x, \mu) \|_\infty \leq C, \qquad t \in \{0,\ldots,T-1\}. 
         $$
        \item the transition probability $P$ and the policies $\{\pi_\theta: \theta \in \Theta\}$ are such that, for any $\mu \in \cP (\cX)^*$ and any $\theta \in \Theta$, we have, for any $t$ $\in$ $\{0,\ldots,T-1\}$, $x^\prime \in \cX$
        $$
         \sum_{i = 1}^d\int_\cA  P(x^\prime \mid x^{(i)}, a, \mu) \pi_\theta (\d a \mid t,x^{(i)}, \mu) \mu (x^{(i)}) > 0, 
        $$
        ensuring that if the initial condition $\mu_0$ has a valid logits representation, then under any admissible policy $\pi_\theta$ the flow of measures $(\mu_t)_{t=0}^T$ has a valid logits representation.
    \end{itemize}
\end{assumption}

\subsection{Policy Gradient Formula: from Single-Agent to Mean-Field}

Let $V(\mu_0, \theta) = V^{\pi_{\theta}}(\mu_0)$. Under Assumption \ref{assumption:finite-state-space}, we have $\pi_{\theta}(\d a \mid t,x,\mu) = p(\theta, a,t,x,\mu) \nu_{\cA}(\d a)$, $\mu_t^{\theta}(\d x) \coloneq  \mathbb P(X_t^\theta = x) \nu_{\cX}(\d x)$, and $l_t^\theta = \logit (\mu_t^\theta)$, where $\nu_{\cX}$ is the counting measure on $\cX$. 
Consider the joint probability of the whole trajectory $\left((X_t^{\theta}, \alpha_t^{\theta})_{t = 0}^{T - 1}, X_{T}^{\theta} )\right)$ under $\pi_{\theta}$, given by 
\[
P_{\theta}(\d x_0, \d a_0, \ldots, \d a_{T - 1}, \d x_T) \coloneq \mu_0(\d x_0) \prod_{\tau = 0}^{T - 1} \pi_{\theta} (\d a_\tau \mid \tau,x_{\tau}, \mu_\tau^{\theta}) P(\d x_{\tau + 1} \mid x_{\tau}, a_{\tau}, \mu_{\tau}^\theta).
\]
We want to emphasize the direct dependence of the law of $X_t^\theta$, denoted by $\mu_t^\theta$, on the parameters $\theta$. In practice, this means that differentiating the value function with respect to $\theta$ induces differentiating the reward functions $r,g$ and transition kernel $P$ with respect to the measure argument.
In the remainder of the paper, as hinted in Section \ref{subsection:logits}, the derivative with respect to the measure argument will be expressed in terms of the derivative with respect to the logits representation of the measure.
We denote by $l_t^\theta = \logit (\mu_t^\theta)$ the logits representation of the law of $X_t^\theta$, and we write $\nabla_\theta l_t^\theta \in \R^{d \times D}$ for the Jacobian matrix of $l_t^\theta$ with respect to $\theta$.

In particular, on top of the standard REINFORCE-like term (\citet{williams1992simple}), we obtain two additional terms corresponding to the measure derivative of the rewards, and the measure derivative of the transition kernel and policy (due to the mean-field nature of the dynamics).
\begin{proposition}
\label{proposition:exact-policy-gradient-formula}
    Let Assumption \ref{assumption:finite-state-space} hold. For any $\mu_0 \in \mathcal P(\cX)^*$ and $\theta \in \Th$, we have
    \begin{equation}
        \label{eq:exact-policy-gradient-formula}
        \nabla_{\theta} V(\mu_0, \theta) =  \mathrm{RF}(\theta) + \mathrm{MD}(\theta)+\mathrm{MFD}(\theta),
    \end{equation}
    where
    \begin{equation}
        \label{eq:decomposition-exact-policy-gradient}
\begin{aligned}
  \mathrm{RF}(\theta) & \coloneq  \int  \left( \sum_{t^\prime = 0}^{T - 1} \nabla_{\theta} \log p(\theta, a_{t^\prime},t^\prime,x_{t^\prime},\mu_{t^\prime}^\theta) \right)
    \left(\sum_{t = 0}^{T - 1} r(x_t, a_t, \mu_t^\theta)  + g(x_T, \mu_T^\theta)\right) \d P_{\theta},\\
\mathrm{MD}(\theta) &\coloneq \int \left(\sum_{t = 0}^{T - 1} \nabla_{l} \tilde r(x_t, a_t, l_t^\theta) \nabla_{\theta}l_t^\theta  + \nabla_{l} \tilde g (x_T, l_T^\theta) \nabla_{\theta}l_T^\theta \right) \d P_{\theta}, \\
\mathrm{MFD}(\theta) &\coloneq \int \left( \sum_{t^\prime = 0}^{T - 1} \nabla_{l} \log \tilde p(\theta, a_{t^\prime},t^\prime,x_{t^\prime},l_{t^\prime}^\theta) \nabla_{\theta} l_{t^\prime}^\theta + \nabla_{l} \log \tilde P(x_{t^\prime + 1} \mid x_{t^\prime},a_{t^{\prime}}, l_{t^\prime}^\theta) \nabla_{\theta} l_{t^\prime}^\theta \right) \\
&\qquad \cdot \left(\sum_{t = 0}^{T - 1} r(x_t, a_t, \mu_t^\theta)  + g(x_T, \mu_T^\theta)\right) \d P_{\theta},
\end{aligned}
\end{equation}
where $\mathrm{MD}(\theta), \mathrm{MFD}(\theta)$ stand for \textbf{M}easure \textbf{D}erivative and \textbf{M}ean-\textbf{F}ield \textbf{D}erivative respectively. 
\end{proposition}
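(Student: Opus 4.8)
The plan is to write $V(\mu_0,\theta) = \int R_\theta(\tau)\,\d P_\theta(\tau)$, where $\tau = (x_0,a_0,\ldots,a_{T-1},x_T)$ denotes a trajectory and $R_\theta(\tau) \coloneq \sum_{t=0}^{T-1} r(x_t,a_t,\mu_t^\theta) + g(x_T,\mu_T^\theta)$ is the total reward, and to observe that $\theta$ enters in two distinct ways: explicitly through the policy density appearing in $P_\theta$, and implicitly through the entire state-distribution flow $(\mu_t^\theta)_t$. Expressing both $R_\theta$ and $P_\theta$ through the logits $l_t^\theta = \logit(\mu_t^\theta)$ via the tilde-functions of Assumption~\ref{assumption:finite-state-space}, the three terms of the decomposition will emerge from the product rule applied to this integral: $\mathrm{MD}$ from differentiating $R_\theta$, and $\mathrm{RF}+\mathrm{MFD}$ from differentiating $P_\theta$ through the score function.

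First I would establish that $\theta \mapsto \mu_t^\theta$, hence $\theta \mapsto l_t^\theta$, is continuously differentiable with a finite Jacobian $\nabla_\theta l_t^\theta \in \R^{d\times D}$. This is the foundational step. The flow obeys the deterministic forward recursion
\[
\mu_{t+1}^\theta(x') = \sum_{i=1}^d \int_\cA P(x'\mid x^{(i)},a,\mu_t^\theta)\,\pi_\theta(\d a \mid t, x^{(i)},\mu_t^\theta)\,\mu_t^\theta(x^{(i)}),
\]
with $\mu_0^\theta = \mu_0$ independent of $\theta$. Since the integrand is a composition of the maps $\tilde P$ and $\tilde p$, each $C^1$ in both $\theta$ and the logits, with $l_t^\theta$, an induction on $t$ using the chain rule yields differentiability of the whole flow; the last bullet of Assumption~\ref{assumption:finite-state-space} keeps every $\mu_t^\theta \in \cP(\cX)^*$, so that $\logit$ is smooth along the flow and all Jacobians $\nabla_\theta l_t^\theta$ are finite.

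Next I would differentiate under the integral sign. Because $R_\theta$ is bounded by Assumption~\ref{assumption:finite-state-space} and the explicit policy score $\nabla_\theta \log p$ is bounded by the constant $C$, while each $\nabla_\theta l_t^\theta$ is a fixed finite matrix and the action integral is over a finite reference measure $\nu_\cA$ on the compact space $\cA$, dominated convergence applies locally in $\theta$ and legitimizes the interchange. The product rule, together with the log-derivative identity $\nabla_\theta \d P_\theta = (\nabla_\theta \log P_\theta)\,\d P_\theta$, then gives $\nabla_\theta V = \int (\nabla_\theta R_\theta)\,\d P_\theta + \int R_\theta\,(\nabla_\theta \log P_\theta)\,\d P_\theta$. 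Since only the logits carry $\theta$ in $R_\theta$, the chain rule gives $\nabla_\theta R_\theta = \sum_t \nabla_l \tilde r(x_t,a_t,l_t^\theta)\nabla_\theta l_t^\theta + \nabla_l \tilde g(x_T,l_T^\theta)\nabla_\theta l_T^\theta$, so the first integral is exactly $\mathrm{MD}(\theta)$.

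Finally I would expand the score. From $\log P_\theta = \log \mu_0(x_0) + \sum_\tau \big[\log \pi_\theta(a_\tau\mid\tau,x_\tau,\mu_\tau^\theta) + \log P(x_{\tau+1}\mid x_\tau,a_\tau,\mu_\tau^\theta)\big]$, differentiating each policy term $\log \tilde p(\theta,a_\tau,\tau,x_\tau,l_\tau^\theta)$ splits into an explicit piece $\nabla_\theta \log p(\theta,a_\tau,\tau,x_\tau,\mu_\tau^\theta)$ (holding the measure fixed) and an implicit piece $\nabla_l \log \tilde p\cdot \nabla_\theta l_\tau^\theta$; the kernel terms, carrying no explicit $\theta$, contribute only $\nabla_l \log \tilde P\cdot \nabla_\theta l_\tau^\theta$. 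Multiplying by $R_\theta$ and integrating, the explicit pieces assemble into $\mathrm{RF}(\theta)$ and the implicit pieces into $\mathrm{MFD}(\theta)$, completing the decomposition. The main obstacle is the first step: cleanly proving differentiability of the implicitly-defined measure flow and controlling its Jacobian, since this is precisely the mean-field feature absent from the classical single-agent argument, where $\mu_t^\theta$ plays no role.
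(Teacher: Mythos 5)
Your proposal is correct and takes essentially the same route as the paper: differentiate under the integral sign, apply the product rule to split $\nabla_\theta V$ into the reward-derivative term (giving $\mathrm{MD}$) and the score term, and then use the likelihood-ratio/chain-rule expansion of $\nabla_\theta \log P_\theta$ through the logits to separate the explicit policy derivative ($\mathrm{RF}$) from the implicit measure-flow derivatives ($\mathrm{MFD}$). The only difference is that you spell out the inductive argument for differentiability of $\theta \mapsto \mu_t^\theta$ via the forward recursion, a step the paper's proof leaves implicit under the umbrella of ``by the chain rule and Assumption \ref{assumption:finite-state-space}.''
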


\begin{remark}
\label{remark:single-agent-remark}
    In the classical single-agent RL setting, the policy gradient formula only contains the term $\mathrm{RF}(\theta)$, 
    which can be estimated in a model-free setting by generating trajectories of the state process. 
    However, in the mean-field setting, we have two additional terms, $\mathrm{MD}(\theta)$ and $\mathrm{MFD}(\theta)$, that involve the derivatives of the rewards, transition kernel and policy with respect to the measure argument. 
    These terms cannot be estimated directly in a model-free setting since they require knowledge of the model coefficients. 
\end{remark}

\subsection{Perturbed Process and Value Function}
\label{subsection:perturbed-process}

We first give an informal presentation of the perturbation argument. For $\pi \in \Pi, \eps > 0,$ and a sequence of perturbations $\lambda$ $=$ $(\lambda_t)_{t = 0}^{T }$, we consider processes $X^\pi_t, Y^{\pi, \lambda}_t$ such that
\begin{equation}
\label{eq:perturbed-process}
\begin{aligned}
X^{\pi}_0 &= \xi, \quad \alpha^{\pi}_t \sim \pi(\cdot \mid t, X^{\pi}_t, \mathbb P_{X_t^{\pi}}),  \quad X^{\pi}_{t + 1} \sim P( \cdot \mid X^{\pi}_t, \alpha^{\pi}_t, \mathbb P_{X_t^\pi}), \\
Y^{\pi,\lambda}_0 &= \xi,\quad  \alpha_{t}^{\pi, \lambda} \sim \pi (\cdot \mid t, Y_t^{\pi, \lambda}, \softmax (l_t^\pi + \eps \lambda_t)), \\ Y_{t + 1}^{\pi, \lambda} &\sim P(\cdot \mid Y_t^{\pi, \lambda}, \alpha_t^{\pi, \lambda}, \softmax (l_t^\pi + \eps \lambda_t)),
\end{aligned}
\end{equation}
where $l_t^\pi \coloneq \logit(\mathbb P_{X_t^\pi})$.
We consider an i.i.d.\ sequence of random perturbations $\Lambda = (\Lambda_t)_{t = 0}^{T}$ of the state distribution at each time step, independent of the idiosyncratic noise, and distributed according to $\nu$. 
As $\mathcal P (\cX)$ is not a vector space, one needs to be careful about the definition of $(\Lambda_t)$. Thanks to the logits parametrization introduced in Section \ref{subsection:logits}, this perturbation can easily be applied in the space of logits. We write $\tilde \pi ( \d a \mid t,x, l) = \pi (\d a \mid t,x, \softmax (l))$. Then, for a fixed $\pi$, $Y^{\pi, \Lambda}$ is a process for which the distributional dependences are randomized by $\Lambda$.
We consider the following approximation of the value function for a policy $\pi$
\begin{equation}\label{eq:def-perturbed-value-function}
    V_{\eps}^{\pi}(\mu_0) \coloneq \mathbb E^{\pi}_{\xi \sim \mu_0} \left[ \sum_{t = 0}^{T - 1} r(Y_t^{\pi, \Lambda}, \alpha^{\pi,\Lambda}_t, \softmax(l_t^\pi + \eps \Lambda_t)) + g(Y^{\pi, \Lambda}_T, \softmax (l_T^\pi + \eps \Lambda_T))\right].
\end{equation}
This perturbed value function can be explicitly written as an integral
\[
\begin{aligned}
    V_{\eps}^{\pi}(\mu_0) &= \int \left(\sum_{t = 0}^{T - 1} \tilde r(y_t, a_t,l^\pi_t + \eps \lambda_t) + \tilde g(y_T, l_T^\pi + \eps \lambda_T) \right) \mu_0(\d y_0)  \\
    & \qquad \cdot \prod_{\tau = 0}^{T - 1} \nu (\d \lambda_\tau)\tilde \pi (\d a_\tau \mid \tau,y_\tau, l^\pi_\tau + \eps \lambda_\tau) 
    \tilde P (\d y_{\tau + 1} \mid y_\tau, a_\tau, l^\pi_\tau + \eps \lambda_\tau).
\end{aligned}
\]
From this expression, the goal is to perform a change of variables in order to remove the dependence on $\theta$ in the measure term of the rewards, policy and transition kernel. The gradient of this perturbed value function will not include the derivatives of the coefficients in the measure argument, which will lead to a model-free approximate policy gradient formula.

 The parametrization of a probability distribution by its logits gives a way of continuously perturbing any probability distribution $\mu \in \cP (\cX)^*$: 
 first, compute $l_{\mu} = \logit (\mu)$, then sample an $\R^{d}$-valued random variable $\Lambda$, and finally define the perturbed distribution as $\softmax (l_\mu + \eps \Lambda) \in \cP (\cX)^*$. This is the main motivation for introducing this parametrization.

We now present a lemma that motivates the choice of this perturbation-via-logits scheme.

\begin{lemma}
    \label{lemma:tv-bound}
    Let $\mu \in \cP (\cX)^*$ and $\eps > 0$. Denote by $\mu_\eps$ the perturbed distribution defined by 
    \[
    \mu_\eps = \softmax (\logit (\mu) + \eps \Lambda), \: \Lambda \sim \mathcal N (0, I_d),
    \]
    hence $\mu_\eps$ is a random variable valued in $\cP (\cX)^*$. Then, we have
    \begin{equation}
        \label{eq:tv-bound}
        \mathbb E[d_\tv(\mu, \mu_\eps)] \leq \frac{\eps}{2}.
    \end{equation}
\end{lemma}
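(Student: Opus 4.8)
The plan is to avoid working with the total variation distance directly (the $\ell_1$ geometry is awkward under the softmax map) and instead dominate it by a Kullback--Leibler divergence via Pinsker's inequality, which turns out to be \emph{linear in the logits} and hence completely transparent in this parametrization. Write $l = \logit(\mu)$, so that $\mu = \softmax(l)$ and $\mu_\eps = \softmax(l + \eps\Lambda)$; since $\mu \in \cP(\cX)^*$ and softmax has full support on the finite set $\cX$, both measures lie in $\cP(\cX)^*$ and their relative entropy is finite. Pinsker's inequality (with the convention $d_\tv = \tfrac12\|\cdot\|_1$) gives, pathwise in $\Lambda$,
\[
d_\tv(\mu,\mu_\eps) \le \sqrt{\tfrac12\,\mathrm{KL}(\mu\,\|\,\mu_\eps)}.
\]

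First I would compute the divergence in closed form. Inserting $\mu_i = e^{l_i}/\sum_k e^{l_k}$ and $(\mu_\eps)_i = e^{l_i+\eps\Lambda_i}/\sum_k e^{l_k+\eps\Lambda_k}$, the normalizing constants factor out of the logarithm and one obtains
\[
\mathrm{KL}(\mu\,\|\,\mu_\eps) = \log\Big(\sum_{i=1}^d \mu_i\, e^{\eps\Lambda_i}\Big) - \eps\sum_{i=1}^d \mu_i\Lambda_i,
\]
that is, the log--moment-generating function of $\eps\Lambda$ under $\mu$ minus its mean (nonnegative by Jensen, as it should be). Next I would take the expectation over $\Lambda\sim\mathcal N(0,I_d)$: the linear term vanishes since $\mathbb E[\Lambda_i]=0$, while by concavity of the logarithm and the standard Gaussian formula $\mathbb E[e^{\eps\Lambda_i}]=e^{\eps^2/2}$,
\[
\mathbb E\Big[\log\sum_{i=1}^d \mu_i\, e^{\eps\Lambda_i}\Big] \le \log\sum_{i=1}^d \mu_i\,\mathbb E\big[e^{\eps\Lambda_i}\big] = \log\big(e^{\eps^2/2}\big) = \frac{\eps^2}{2},
\]
so that $\mathbb E[\mathrm{KL}(\mu\,\|\,\mu_\eps)] \le \eps^2/2$. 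A final application of Jensen to the concave square root then yields
\[
\mathbb E[d_\tv(\mu,\mu_\eps)] \le \mathbb E\Big[\sqrt{\tfrac12\,\mathrm{KL}(\mu\,\|\,\mu_\eps)}\Big] \le \sqrt{\tfrac12\,\mathbb E[\mathrm{KL}(\mu\,\|\,\mu_\eps)]} \le \sqrt{\tfrac{\eps^2}{4}} = \frac{\eps}{2},
\]
which is the claim.

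The computation is short once the route is chosen, so the ``obstacle'' here is conceptual rather than technical: recognizing that Pinsker lets one trade the intractable softmax-$\ell_1$ expression for a relative entropy whose closed form is exactly a Gaussian log-MGF. The two points I would be careful about are (i) taking the divergence in the order $\mathrm{KL}(\mu\,\|\,\mu_\eps)$, with the fixed measure first, since this is what produces the clean expression above; and (ii) orienting the two Jensen steps so that both give \emph{upper} bounds --- concavity of $\log$ to replace the random log-MGF by the deterministic Gaussian MGF, and concavity of $\sqrt{\cdot}$ to pull the expectation inside the root. Integrability of both applications is immediate because $\cX$ is finite, all measures involved have full support, and the Gaussian MGF is finite.
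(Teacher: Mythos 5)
Your proposal is correct and follows essentially the same route as the paper's own proof: Pinsker's inequality, the closed-form expression $\mathrm{KL}(\mu\,\|\,\mu_\eps) = \log\big(\sum_j \mu_j e^{\eps\Lambda_j}\big) - \eps\sum_j \mu_j\Lambda_j$, Jensen's inequality applied to the concave square root and to the logarithm, and the Gaussian moment-generating function $\mathbb E[e^{\eps\Lambda_j}]=e^{\eps^2/2}$. The only difference is the (immaterial) order in which the two Jensen steps are applied.
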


In particular, Lemma \ref{lemma:tv-bound} shows that the perturbation scheme leads to a controllable perturbation of the state distribution in total variation distance, of order $\eps$.

\subsection{Derivation and Convergence of Perturbed Gradient}

We now present a policy gradient formula for the perturbed value function $V_\eps(\mu, \theta) = V_\eps^{\pi_\theta}(\mu)$ in \eqref{eq:def-perturbed-value-function}. 
It is the foundation of our model-free estimator of $\nabla_\theta V(\mu, \theta)$.

\begin{theorem}
    \label{theorem:perturbed-gradient}
    Let Assumption \ref{assumption:finite-state-space} hold.
    Let $\Lambda_t,t = 0, \ldots, T$ be i.i.d. $\mathcal N (0, I_d)$ random variables. 
    Assume that for any $\pi_\theta$, $(X_t^\theta)_{t = 0}^T$ is an $\cX$-valued process independent of $(\Lambda_t)_{t=0}^{T}$ and
    $(Y_t^{\theta, \eps})_{t=0}^T$ is an $\cX$-valued process such that, conditioned on $\Lambda = \lambda = (\lambda_0, \ldots, \lambda_{T}) \in \R^{dT}$,
    $X^\theta$ and $Y^{\theta, \eps}$ are distributed according to \eqref{eq:perturbed-process} with $\pi = \pi_\theta$ 
    and initial condition $\xi$ such that $\mu_0 = \mathcal L (\xi) \in \mathcal P (\cX)^*$.
    Let $(\alpha_t^\theta)_{t=0}^{T-1}$ and $(\alpha_t^{\theta, \eps})_{t=0}^{T-1}$ be the corresponding controls.
    Then, for $\theta \in \Theta$, the perturbed value function $V_\eps (\mu_0, \theta) = V_\eps^{\pi_\theta}(\mu_0)$ is differentiable in $\theta$ with 
    \begin{equation}
        \label{eq:policy-gradient-formula}
\nabla_{\theta} V_\eps(\mu_0, \theta) = \mathbb E \Bigg[ \sum_{t = 0}^{T} \left(\eps^{-1} \Lambda_t \nabla_{\theta} l_t^\theta +  1_{t < T} \nabla_\theta \log \tilde p(\theta, \alpha_t^{\theta,\eps}, t,Y_t^{\theta,\eps}, l_t^\theta + \eps \Lambda_t) \right)G_t^\eps\Bigg],
    \end{equation}
    where $G_t^\eps = \sum_{s=t}^{T-1} \tilde r(Y^{\theta,\eps}_s, \alpha_s^{\theta,\eps}, l_s^\theta + \eps \Lambda_s) + \tilde g(Y_T^{\theta, \eps}, l_T^\theta + \eps \Lambda_T)$ and $l_t^\theta = \logit (\mathbb P_{X_t^\theta})$.
\end{theorem}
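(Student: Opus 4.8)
The plan is to start from the explicit integral representation of $V_\eps(\mu_0,\theta)$ displayed just after \eqref{eq:def-perturbed-value-function}, in which $\theta$ enters the integrand in three intertwined ways: through the policy parameter, through the logit arguments $l_\tau^\theta+\eps\lambda_\tau$ of $\tilde r,\tilde g,\tilde\pi,\tilde P$, and through the centering $l_\tau^\theta$ of the perturbation. The central device is a $\theta$-dependent change of variables that decouples these. For each $\tau$ I substitute $u_\tau = l_\tau^\theta + \eps\lambda_\tau$, i.e.\ $\lambda_\tau = (u_\tau - l_\tau^\theta)/\eps$, whose Jacobian is the constant $\eps^{-d}$. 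Since $\Lambda_\tau\sim\cN(0,I_d)$, the factor $\nu(\d\lambda_\tau)$ becomes a Gaussian density $q_\tau^\theta(u_\tau)\,\d u_\tau$ with mean $l_\tau^\theta$ and covariance $\eps^2 I_d$. After this substitution the functions $\tilde r(y_t,a_t,u_t)$, $\tilde g(y_T,u_T)$, $\tilde P(\d y_{\tau+1}\mid y_\tau,a_\tau,u_\tau)$ no longer depend on $\theta$, and the only remaining $\theta$-dependence sits in (i) the explicit policy parameter of $\tilde p(\theta,a_\tau,\tau,y_\tau,u_\tau)$ and (ii) the means $l_\tau^\theta$ of the Gaussian densities $q_\tau^\theta$. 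Thus $V_\eps(\mu_0,\theta)=\int R(y,a,u)\,\d\Q_\theta$ with a $\theta$-free return $R$ and a joint law $\Q_\theta$ whose density factorizes over time.

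Given this representation I would differentiate under the integral sign and apply the log-derivative (likelihood-ratio) identity: since $R$ is $\theta$-free, $\nabla_\theta V_\eps = \E[R\,\nabla_\theta\log(\text{density})]$, and $\nabla_\theta\log(\text{density})$ splits into a sum over time of a Gaussian score and a policy score. The Gaussian score of $q_\tau^\theta$ is $\eps^{-2}(u_\tau-l_\tau^\theta)^\top\nabla_\theta l_\tau^\theta$, which upon reverting to $u_\tau=l_\tau^\theta+\eps\Lambda_\tau$ becomes exactly $\eps^{-1}\Lambda_\tau\nabla_\theta l_\tau^\theta$; the policy score is $\nabla_\theta\log\tilde p(\theta,\alpha_\tau^{\theta,\eps},\tau,Y_\tau^{\theta,\eps},l_\tau^\theta+\eps\Lambda_\tau)$, and there is no policy factor at time $T$, which produces the indicator $1_{t<T}$ in \eqref{eq:policy-gradient-formula}. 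To obtain the reward-to-go weighting $G_t^\eps$ rather than the full return, I would differentiate the expectation of each summand of $R$ separately: the reward at time $t$ depends only on $(y,a,u)$ up to time $t$, so integrating out the later coordinates turns every factor $q_\tau^\theta$, $\tilde p$ and $\tilde P$ with $\tau>t$ into $1$, and only the scores $S_0,\dots,S_t$ survive. Summing over $t$ and exchanging the order of summation regroups the scores so that $S_\tau$ is multiplied by $\sum_{s\ge\tau}$ of the rewards, i.e.\ by $G_\tau^\eps$, yielding \eqref{eq:policy-gradient-formula}.

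Two points require care. First, the differentiability of $\theta\mapsto l_t^\theta$ is not given a priori: it follows by differentiating the forward recursion $\mu_{\tau+1}^\theta(x')=\sum_i\int_\cA P(x'\mid x^{(i)},a,\mu_\tau^\theta)\pi_\theta(\d a\mid\tau,x^{(i)},\mu_\tau^\theta)\mu_\tau^\theta(x^{(i)})$ and using the smoothness assumptions on $\tilde P,\tilde p$ together with the last bullet of Assumption \ref{assumption:finite-state-space}, which guarantees $\mu_\tau^\theta\in\cP(\cX)^*$ so that $l_\tau^\theta=\logit(\mu_\tau^\theta)$ is a smooth function of $\mu_\tau^\theta$. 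Second, and this is the main obstacle, the interchange of $\nabla_\theta$ and the integral must be justified even though the Gaussian score $\eps^{-2}(u_\tau-l_\tau^\theta)$ is unbounded in $u_\tau$. I would handle this by dominated convergence on a fixed compact neighborhood of $\theta$: the return $R$ is bounded by $(T+1)M_0$, the policy score is bounded by $C$ through Assumption \ref{assumption:finite-state-space}, and the Gaussian score, although linear in $u_\tau$, has all moments under $q_\tau^\theta$ and its product with the rapidly decaying Gaussian density admits an integrable majorant uniform over the neighborhood; continuity of the derivatives in $a$ and $l$ then furnishes a dominating function and legitimizes the differentiation, completing the proof.
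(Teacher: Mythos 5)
Your proposal is correct and follows essentially the same route as the paper: the change of variables $u_\tau=l_\tau^\theta+\eps\lambda_\tau$ turning the perturbation into a Gaussian density centred at $l_\tau^\theta$, differentiation under the integral with the likelihood-ratio identity splitting into the Gaussian score $\eps^{-1}\Lambda_t\nabla_\theta l_t^\theta$ and the policy score, and the causality trick to replace the full return by $G_t^\eps$. Your explicit treatment of the differentiability of $\theta\mapsto l_t^\theta$ via the forward recursion and of the domination argument for the unbounded Gaussian score is in fact more careful than the paper's, which asserts these interchanges without detail.
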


\begin{remark}
    Notice that, contrary to the exact policy gradient formula \eqref{eq:exact-policy-gradient-formula}, the formula \eqref{eq:policy-gradient-formula} only involves derivatives with respect to the trainable parameters $\theta$. However, estimating this quantity numerically is not straightforward due to the gradient of logits $\nabla_\theta l_t^\theta$. To overcome this challenge, one can use the same perturbation method applied to a problem with the same dynamics but with a different reward, as detailed in Section \ref{subsection:model-free-estimator}, 
\end{remark}

\begin{theorem}
    \label{theorem:convergence-perturbed-gradient}
    Assume the same conditions as in Theorem \ref{theorem:perturbed-gradient}. Furthermore, assume that $l \mapsto \tilde P (x^\prime \mid x, a, l) p(\theta, a, t,x, l)$ is twice differentiable, with bounded second derivative uniformly in $t,x, x^\prime, a, \theta$.
    Then, as $\eps \to 0$, we have $\nabla_\theta V_\eps(\mu_0, \theta) \to \nabla_\theta V(\mu_0, \theta)$.
\end{theorem}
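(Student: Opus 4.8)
The plan is to pass to the limit $\eps\to 0$ directly in the perturbed gradient formula \eqref{eq:policy-gradient-formula} and match the limit with the three-term decomposition \eqref{eq:decomposition-exact-policy-gradient}. I split $\nabla_\theta V_\eps(\mu_0,\theta)=A_\eps+B_\eps$, where $B_\eps:=\E[\sum_{t=0}^{T-1}\nabla_\theta\log\tilde p(\theta,\alpha_t^{\theta,\eps},t,Y_t^{\theta,\eps},l_t^\theta+\eps\Lambda_t)\,G_t^\eps]$ collects the REINFORCE-type score and $A_\eps:=\E[\sum_{t=0}^{T}\eps^{-1}\Lambda_t\,\nabla_\theta l_t^\theta\,G_t^\eps]$ collects the singular logit-perturbation term. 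I will show $B_\eps\to\mathrm{RF}(\theta)$ and $A_\eps\to\mathrm{MD}(\theta)+\mathrm{MFD}(\theta)$.

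The term $B_\eps$ is the easy one. As $\eps\to0$ the perturbed logits $l_t^\theta+\eps\Lambda_t$ converge, for each fixed realization, to $l_t^\theta$, so by continuity of $\tilde P,\tilde p,\tilde r,\tilde g$ in $l$ the law of the perturbed trajectory $(Y_t^{\theta,\eps},\alpha_t^{\theta,\eps})$ converges (on the finite state space) to that of $(X_t^\theta,\alpha_t^\theta)$ under $P_\theta$. Using the uniform bound on $\nabla_\theta\log p$ and the boundedness of $\tilde r,\tilde g$ from Assumption \ref{assumption:finite-state-space}, dominated convergence gives $B_\eps\to\E_{P_\theta}[\sum_{t=0}^{T-1}\nabla_\theta\log p(\theta,\alpha_t,t,X_t,\mu_t^\theta)\,G_t^0]$, with $G_t^0$ the unperturbed reward-to-go. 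The standard causality (baseline) argument, namely $\E[\nabla_\theta\log p_{t'}\mid\mathcal F_{t'}]=0$ so that scores are orthogonal to earlier rewards, converts this reward-to-go expression into the full-horizon form and identifies it with $\mathrm{RF}(\theta)$.

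The essential work is the limit of $A_\eps$, where the $\eps^{-1}$ prefactor must be tamed. Fix $t$ and use the integral representation of $V_\eps^{\pi}(\mu_0)$. Conditionally on all data except $\lambda_t$, the only factors depending on $\lambda_t$ are the time-$t$ policy density $\tilde p(\theta,a_t,t,y_t,l_t^\theta+\eps\lambda_t)$, the time-$t$ transition density $\tilde P(y_{t+1}\mid y_t,a_t,l_t^\theta+\eps\lambda_t)$, and the time-$t$ running reward $\tilde r(y_t,a_t,l_t^\theta+\eps\lambda_t)$ inside $G_t^\eps$; all later rewards are, given $y_{t+1}$, independent of $\lambda_t$. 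Since $\Lambda_t\sim\mathcal N(0,I_d)$, Gaussian integration by parts (Stein's identity) converts $\eps^{-1}\E[\Lambda_t\,(\cdot)]$ into the expectation of the $\lambda_t$-gradient of these factors, and the chain rule produces exactly one factor of $\eps$ cancelling the $\eps^{-1}$. Writing $\Phi=\tilde p\,\tilde P$ and $\nabla_l\log\Phi=\nabla_l\log\tilde p+\nabla_l\log\tilde P$, the $\lambda_t$-derivative of $\Phi\,G_t^\eps$ splits into $(\nabla_l\log\tilde p+\nabla_l\log\tilde P)\,G_t^\eps$, which retains the transition measure, plus $\nabla_l\tilde r$. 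Re-assembling the measure and letting $\eps\to0$, the first piece yields the time-$t$ summand of $\mathrm{MFD}(\theta)$ in reward-to-go form and the second the running-reward summand of $\mathrm{MD}(\theta)$; the boundary index $t=T$ is handled identically and gives the terminal $\nabla_l\tilde g$ summand of $\mathrm{MD}(\theta)$. As for $B_\eps$, the conditional-mean-zero identity $\int\nabla_l\log\tilde p\,\tilde p\,\nu_\cA(\d a)=\nabla_l\!\int\tilde p\,\nu_\cA(\d a)=0$ and its analogue for $\tilde P$ upgrade the reward-to-go sums to the full-horizon sums appearing in \eqref{eq:decomposition-exact-policy-gradient}, so $A_\eps\to\mathrm{MD}(\theta)+\mathrm{MFD}(\theta)$.

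The main obstacle is making the previous paragraph rigorous: because of the $\eps^{-1}$ factor one cannot apply dominated convergence to $\eps^{-1}\Lambda_t G_t^\eps$ directly, and must instead show that $\E[\Lambda_t G_t^\eps]$ is genuinely $O(\eps)$ with limiting slope equal to the Stein derivative above. This is exactly where the extra hypothesis of Theorem \ref{theorem:convergence-perturbed-gradient}, that $l\mapsto\tilde P(x'\mid x,a,l)\,p(\theta,a,t,x,l)$ is twice differentiable with second derivative bounded uniformly in $(t,x,x',a,\theta)$, enters: setting $H_t(\eps):=\E[\Lambda_t\,G_t^\eps]$, a second-order Taylor expansion in $\eps$ of the perturbed transition density, combined with $\E[\Lambda_t]=0$ and $\E[\Lambda_t\Lambda_t^\trans]=I_d$, gives $H_t(\eps)=\eps\,H_t'(0)+O(\eps^2)$ with remainder controlled uniformly by the second-derivative bound, so $\eps^{-1}H_t(\eps)\to H_t'(0)$. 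The same bound, together with the positivity guaranteed by the last item of Assumption \ref{assumption:finite-state-space}, provides the domination needed to interchange limit and integral over the unbounded perturbation variable $\lambda_t\in\R^d$ and over the compact trajectory space $\cX^{T+1}\times\cA^T$; summing the finitely many time indices then completes the proof.
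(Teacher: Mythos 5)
Your proposal is correct and reaches the same limit objects (\(\mathrm{RF}\), \(\mathrm{MD}\), \(\mathrm{MFD}\)) by the same overall decomposition of \eqref{eq:policy-gradient-formula}, but the mechanism you use for the singular term \(A_\eps\) differs from the paper's. The paper first transfers the expectation from the perturbed process \(Y^{\theta,\eps}\) back to the unperturbed process \(X^\theta\) via a density-ratio lemma (Lemma \ref{lemma:density-ratio-trick}), then Taylor-expands the global likelihood ratio \(R_T^{\theta,\eps}\) in \eqref{eq:likelihood-ratio-T} to first order with a Lagrange remainder controlled by the uniform second-derivative bound, and finally uses \(\E[\Lambda_s^\trans\Lambda_\tau]=\delta_{s\tau}I_d\) together with \(\E[r\,\Lambda_t]=0\) to extract \(\mathrm{MD}\) and \(\mathrm{MFD}\). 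You instead work directly on the perturbed-process integral representation and apply Gaussian integration by parts (Stein's identity) in each perturbation variable \(\lambda_t\) separately, letting the product rule split the \(\lambda_t\)-derivative of \(\tilde p\,\tilde P\,G_t^\eps\) into the \(\nabla_l\tilde r\) (resp.\ \(\nabla_l\tilde g\)) piece and the \((\nabla_l\log\tilde p+\nabla_l\log\tilde P)\,G_t^\eps\) piece. Your route is more local — it avoids the global ratio and its combinatorial remainder — at the cost of having to justify Stein's identity for the conditional expectation \(\lambda\mapsto\E[G_t^\eps\mid\Lambda_t=\lambda]\) and the integrability of the resulting score terms against the Gaussian; the uniform second-derivative bound on \(l\mapsto\tilde P\,\tilde p\) does supply this (it gives at most linear growth of the first derivative along \(l_t^\theta+\eps\lambda\) and controls the \(O(\eps^2)\) remainder in \(H_t(\eps)=\eps H_t'(0)+O(\eps^2)\)), so this is the same hypothesis doing the same work in a different place. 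Two minor imprecisions worth tightening: the domination needed to pass \(\eps\to0\) inside the \(\lambda_t\)-integral comes from the derivative bounds and Gaussian tails, not from the positivity condition in the last item of Assumption \ref{assumption:finite-state-space} (that condition only ensures the logits flow is well defined); and the "upgrade from reward-to-go to full-horizon" direction should be stated as verifying that the extra full-horizon terms have zero expectation by the conditional-mean-zero identity, which is exactly the causality argument the paper invokes when passing between \eqref{eq:policy-gradient-formula} and \eqref{eq:decomposition-exact-policy-gradient}.
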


\begin{remark}
    We want to highlight that both the gradient representation of Theorem \ref{theorem:perturbed-gradient} and the convergence result Theorem \ref{theorem:convergence-perturbed-gradient} 
    hold independently of the way the processes $X^\theta$ and $Y^{\theta, \Lambda}$ are generated, as long as the assumptions of the theorems are satisfied.
     In particular, we do not make any coupling assumption between $X_t^\theta$ and $Y_t^{\theta, \Lambda}$.
     In practice, using coupled samples might help reduce variance in the gradient estimates when subtracting a baseline depending on $X_t^\theta$ to the cumulated rewards in \eqref{eq:policy-gradient-formula},
     as commonly done in single-agent REINFORCE implementations (\citet{williams1992simple, Sutton1998}).
\end{remark}

\section{Policy Gradient Algorithm}
\label{section:pg-algorithm}

In this section, we introduce a policy gradient estimator for mean-field control problems based on \eqref{eq:policy-gradient-formula}, and analyze its bias and error.

\subsection{Model-Free Estimator}
\label{subsection:model-free-estimator}

For the policy gradient formula \eqref{eq:policy-gradient-formula} to be estimated in a model-free way, one needs to approximate $\nabla_{\theta} l_t^\theta$.
Recall that, under the probability--logit correspondence, we have
\[
\nabla_{\theta} l_t^\theta = \nabla_{\theta}\left( \log \mathbb P(X_t^\theta = x^{(1)}), \cdots, \log \mathbb P(X_t^\theta = x^{(d)})\right).
\]
Notice that 
\[
\nabla_{\theta} \log \mathbb P(X_t^\theta = x^{(i)}) = \frac{\nabla_\theta \mathbb P (X_t^\theta = x^{(i)})}{\mathbb P (X_t^\theta = x^{(i)})}.
\]
Using the same perturbation method as in Section \ref{subsection:perturbed-process}, we can approximate $\nabla_{\theta} \mathbb P(X_t^\theta= x^{(i)})$ by
\[
\begin{aligned}
    \nabla_\theta \mathbb P (X_t^\theta= x^{(i)}) &= \nabla_{\theta} \mathbb E \left[ 1_{X_t^\theta = x^{(i)}} \right] \\
    &\approx  \mathbb E \left[ 1_{Y_t^{\theta,\eps} = x^{(i)}} \sum_{s = 0}^{T} \eps^{-1} \Lambda_s \nabla_\theta l_s^\theta +  1_{Y_t^{\theta,\eps} = x^{(i)}} \sum_{s = 0}^{T-1} \nabla_{\theta} \log \tilde p(\theta, \alpha_s^{\theta, \eps},s,  Y_s^{\theta, \eps},l_s^\theta + \eps \Lambda_s)\right] \\
    &= \mathbb E \left[ 1_{Y_t^{\theta,\eps} = x^{(i)}} \sum_{s = 0}^{t-1} \eps^{-1} \Lambda_s  \nabla_\theta l_s^\theta  +  1_{Y_t^{\theta,\eps} = x^{(i)}} \sum_{s = 0}^{t-1} \nabla_{\theta} \log \tilde p(\theta, \alpha_s^{\theta, \eps},s,  Y_s^{\theta, \eps},l_s^\theta + \eps \Lambda_s)\right].
\end{aligned}
\]
Notice that we used the causality and log-likelihood trick: we are able to remove the terms $s \geq t$ due to the `reward' at time $t$ not depending on the actions at time $s \geq t$. Indeed, for $s \geq t$, by denoting $\mathcal F_s$ the filtration generated by $(Y_{\leq s}^{\theta, \eps}, \Lambda_{\leq s}, \alpha_{ < s}^{\theta, \eps})$, we have
\[
\begin{aligned}
&\mathbb E \left[ 1_{Y_t^{\theta,\eps} = x^{(i)}} \left( \eps^{-1}\Lambda_s  \nabla_\theta l_s^\theta + \nabla_{\theta} \log \tilde p(\theta, \alpha_s^{\theta, \eps},s,  Y_s^{\theta, \eps},l_s^\theta + \eps \Lambda_s) \right)\right] \\ 
& \qquad= \mathbb E \left[ 1_{Y_t^{\theta,\eps} = x^{(i)}} \right] \mathbb E \left[\eps^{-1}\Lambda_s  \nabla_\theta l_s^\theta\right] + \mathbb E \left[ 1_{Y_t^{\theta,\eps} = x^{(i)}} \nabla_{\theta} \log \tilde p(\theta, \alpha_s^{\theta, \eps},s,  Y_s^{\theta, \eps},l_s^\theta + \eps \Lambda_s)  \right] \\ 
&\qquad = \mathbb E \left[ 1_{Y_t^{\theta,\eps} = x^{(i)}} \right] \cdot 0 + \mathbb E \left[ \mathbb E \left[ 1_{Y_t^{\theta,\eps} = x^{(i)}} \nabla_{\theta} \log \tilde p(\theta, \alpha_s^{\theta, \eps},s, Y_s^{\theta, \eps}, l_s^\theta + \epsilon \Lambda_s) \mid \mathcal{F}_s \right] \right] \\
& \qquad= 0 + \mathbb E \left[ 1_{Y_t^{\theta,\eps} = x^{(i)}} \int \nabla_{\theta} \tilde p(\theta, a, s, Y_s^{\theta, \eps}, l_s^\theta + \epsilon \Lambda_s) \, da \right] \\
&\qquad = \mathbb E \left[ 1_{Y_t^{\theta,\eps} = x^{(i)}} \nabla_{\theta} (1) \right] = 0. 
\end{aligned}
\]
Hence, assuming we have access to $\mathbb P(X_t^\theta =x^{(i)})$ for every $i =1,\ldots,d$ (either via direct observation or by sampling many particles), we get $T \times d \times D $ linear equations with unknowns $\frac{\partial }{\partial \theta_k} \mathbb P (X_t^\theta = x^{(i)})$, for $t = 0,\ldots, T- 1,i = 1,\ldots,d, k = 1,\ldots, D$, where the coefficients can be estimated by Monte Carlo sampling.
This system is triangular in the time variable, and it is easy to see that $\nabla_\theta \mathbb P(X_0^\theta = x^{(i)}) = 0$ for all $i$, hence the system can be solved by forward substitution.
Still, the resulting estimator is biased due to the approximation in the perturbation method, but the bias can be made arbitrarily small by taking $\eps$ small enough. 
However, taking $\eps$ too small may lead to high variance in the estimator. 
The bias and variance of this estimator are studied in Sections \ref{subsection:convergence-of-estimator} and \ref{subsection:error} respectively. 
In practice, the linear system might be easier to solve by subtracting a baseline from the first term
\[
\mathbb E \left[ 1_{Y_t^{\theta,\eps} = x^{(i)}} \sum_{s = 0}^{t-1} \eps^{-1}  \Lambda_s \nabla_\theta l_s^\theta \right] = \mathbb E \left[ (1_{Y_t^{\theta,\eps} = x^{(i)}} - b(X^\theta_t) )\sum_{s = 0}^{t-1} \eps^{-1}  \Lambda_s \nabla_\theta \mathbb P_{X_s^\theta} \right].
\]
We leave the study of variance reduction techniques for this estimator to future work.

\subsection{Algorithm}

The perturbed gradient given by \eqref{eq:policy-gradient-formula} is the basis of our policy-gradient algorithm, which we call Mean-Field REINFORCE (abbreviated MF-REINFORCE).
The pseudocode for Mean-Field REINFORCE is given in Algorithm \ref{alg:reinforce}, where we use Algorithm \ref{alg:state-distribution-gradients} to estimate the state distribution gradients $\nabla_\theta l_t^\theta, t = 0, \ldots, T$. In both of these algorithms, we assume that, on top of being able to generate individual transitions, we have access to a population simulator, i.e. a simulator that outputs $\mathbb P_{X_{t+1}}$ given $\mathbb P_{X_t}$ and a policy $\pi$. In practice, one can also estimate $\mathbb P_{X_t}$ by sampling many independent trajectories.

\begin{algorithm}[H]
\caption{Mean-Field REINFORCE for Finite-Horizon MFMDP}
\label{alg:reinforce}
\begin{algorithmic}[1]
\State \textbf{Inputs:} number of training steps $N_\text{train}$; horizon $T$; number of trajectories $N$; number of trajectories for logits gradient estimation $n$; differentiable policy $\pi_\theta(\d a\mid t,x,\mu)$; reward $r(x,a,\mu)$; terminal reward $g(x,\mu)$; transition kernel $P(\cdot\mid x,a,\mu)$; initial state law $\mu_0$.
\State Initialize parameters $\theta$.
\For{$i = 1, \ldots, N_\text{train}$}
\For{$k=1,\ldots, N$}
    \State Sample $X_0,Y_0 \sim \mu_0$ independently.
    \State Set $ l_0 = \mathrm{logit}(\mu_0)$.
    \State Sample $\Lambda_t \sim \mathcal N(0,I_d)$ independently for $t = 0, \ldots, T$.
    \For{$t=0,\ldots, T-1$} \Comment{Sample trajectories for mean-field and perturbed processes}
        \State Sample $\alpha_t \sim \pi_\theta(\cdot \mid t,X_t,\mathbb P_{X_t})$.
        \State Sample $\alpha^\eps_t \sim \tilde \pi_\theta(\cdot \mid t,Y_t, l_t + \eps \Lambda_t)$.
        \State Observe one-step rewards $R_t \gets r(X_t, \alpha_t,\mathbb P_{X_t}), R_t^\eps \gets \tilde r(Y_t, \alpha_t^\eps,  l_t + \eps \Lambda_t)$.
        \State Sample next states $X_{t+1} \sim P(\cdot \mid X_t, \alpha_t, \mathbb P_{X_t}), Y_{t + 1} \sim \tilde P(\cdot \mid Y_t, \alpha_t^\eps,  l_t + \eps \Lambda_t)$.
        \State Observe new state distribution (as logits) $l_{t + 1} = \logit (\mathbb P_{X_{t + 1}})$ given $\mathbb P_{X_t}, \pi_\theta$.
    \EndFor
    \State Set terminal rewards $G_T \gets g(X_T,\mathbb P_{X_T}), G_T^\eps \gets \tilde g(Y_T, l_T + \eps \Lambda_T)$.
    \For{$t=T-1,\ldots, 0$} \Comment{Backward pass to form returns}
        \State $G_t \gets R_t + G_{t+1}, G^\eps_{t} \gets R_t^\eps + G^\eps_{t + 1}.$ 
    \EndFor
    \State Estimate state distribution gradients $\left(\widehat{\nabla_\theta l_t}\right)_{t=0}^{T}$ with Algorithm \ref{alg:state-distribution-gradients} using $n$ trajectories.
    \State $\widehat{\nabla V}(\theta)^{(k)} \gets \displaystyle \sum_{t=0}^{T} \left(\eps^{-1} \Lambda_t \widehat{\nabla_\theta l_t} + \mathbf 1 _{t < T}\nabla_\theta \log \tilde \pi_\theta(\alpha_t^\eps \mid t,Y_t, l_t + \eps \Lambda_t) \right)G_t^\eps$. \Comment{PG estimator}
\EndFor
\State $\widehat{\nabla V}(\theta) \gets \frac{1}{N} \sum_{k=1}^N \widehat{\nabla V}(\theta)^{(k)}$.
\State{Perform gradient ascent step on $\theta$ using $\widehat{\nabla V}(\theta)$.}
\EndFor
\State \textbf{Output:} $\theta$.
\end{algorithmic}
\end{algorithm}

\begin{algorithm}[H]
\caption{State Distribution Gradient Estimation}
\label{alg:state-distribution-gradients}
\begin{algorithmic}[2]
\State \textbf{Inputs:} horizon $T$; number of trajectories $n$; differentiable policy $\pi_\theta(a\mid t,x,\mu)$; transition kernel $P(\cdot\mid x,a,\mu)$; initial state law $\mu_0$.
\State Set $\widehat{\nabla_\theta l_0} \gets \mathbf{0}_{d \times D}$.
\For{$t=1,\ldots, T$} 
\For{$k=1,\ldots, n$}
    \State Sample $X_0^k,Y_0^k \sim \mu_0$ independently.
    \State Set $l_0 = \mathrm{logit}(\mu_0)$.
    \State Sample $\Lambda_t^k \sim \mathcal N(0,I_d)$ independently for $t = 0, \ldots, T-1$.
    \For{$s=0,\ldots, t-1$} \Comment{Sample trajectories}
        \State Sample $\alpha_s^k \sim \pi_\theta(\cdot \mid s,X_s^k,\mathbb P_{X_s})$.
        \State Sample $\alpha^{\eps,k}_s \sim \tilde \pi_\theta(\cdot \mid s,Y_s^k, l_s + \eps \Lambda_s)$.
        \State Sample next states $X_{s+1}^k \sim P(\cdot \mid X_s^k, \alpha_s^k, \mathbb P_{X_s}), Y_{s + 1}^k \sim \tilde P(\cdot \mid Y_s^k, \alpha_s^{\eps,k}, l_s + \eps \Lambda_s^k)$.
        \State Observe new state distribution (as logits) $l_{s + 1} = \logit (\mathbb P_{X_{s + 1}})$ given $\mathbb P_{X_t}, \pi_\theta$.
    \EndFor
\EndFor

    \For{$i = 1, \ldots, d$}
        \State $\widehat{\nabla_{\theta} \mu^{(i)}} \gets \displaystyle \frac{1}{n}\sum_{k = 1}^n 1_{Y^k_t = x^{(i)}} 
        \left( \sum_{s = 0}^{t-1}  \eps^{-1} \Lambda_s^k \widehat{\nabla_\theta l_s}  + \nabla_\theta \log \tilde \pi_\theta(\alpha_s^{\eps,k} \mid s,Y_s^k, l_s + \eps \Lambda^k_s) \right)  $.
    \EndFor
    \State $\widehat{\nabla_\theta l_t} \gets \displaystyle \left(\frac{\widehat{\nabla_{\theta} \mu^{(i)}}}{\mathbb P (X_t = x^{(i)})}\right)_{i = 1}^{d} \in \R^{d \times D}$.
\EndFor
\State \textbf{Output:} $\left(\widehat{\nabla_\theta l_t}\right)_{t=0}^{T}$.
\end{algorithmic}
\end{algorithm}

\begin{remark}
    Notice that for every time step $t=1, \ldots, T$ we resample new trajectories to estimate $\nabla_\theta l_t^\theta$ in Algorithm \ref{alg:state-distribution-gradients},
     in order to reduce the bias induced by the approximation of $\nabla_\theta l_s^\theta$ for $s \leq t-1$. 
    Similarly, we use independent trajectories for estimating $\nabla_\theta l_t^\theta$ and $\nabla_\theta V_{\eps}(\mu, \theta)$.
\end{remark}

\subsection{Bias of Estimator}
\label{subsection:convergence-of-estimator}

We now study the bias of the estimator given by the Mean-Field REINFORCE algorithm.
The first step is to analyze the bias in the estimation of $\nabla_\theta l_t^\theta$ given by Algorithm \ref{alg:state-distribution-gradients}.
As $\nabla_\theta l_t^\theta \in \R^{d \times D}$, we bound the bias in the following matrix norm $\| M \| = \sup_{j = 1, \ldots, D} \sum_{i=1}^d M_{i,j}$ for $M \in \R^{d \times D}$. In order to provide quantitative bounds, we make the following set of assumptions.

\begin{assumption}
    \label{assumption:grad-log-policy}
    The parametrization of the policy $\pi_\theta$ is such that $p$ and $\nabla_\theta \log p$ are globally Lipschitz in the measure argument, and so is the transition kernel $P$. Formally, there exist $L_p, L^\prime_p,L_P > 0$ such that for all  $\theta$, $t$ $\in$ $\{0,\ldots,T-1\}$, $x,x^\prime \in \cX, a \in \cA$ and $\mu, \mu' \in \mathcal P(\cX)$, 
    \begin{equation}
        \begin{aligned}
            &\left| p(\theta, a, t,x, \mu) - p(\theta, a, t,x, \mu') \right| \leq L_p d_\tv(\mu, \mu'), \\
            &\left\| \nabla_\theta \log p(\theta, a, t,x, \mu) - \nabla_\theta \log p(\theta, a, t,x, \mu') \right\|_\infty \leq L^\prime_p d_\tv(\mu, \mu'), \\
            &\sup_{x^\prime \in \cX}\left| P(x^\prime \mid x, a, \mu) - P(x^\prime \mid x, a, \mu') \right| \leq L_P d_\tv(\mu, \mu').
        \end{aligned}
    \end{equation}
    Moreover, $\tilde P$ and $\tilde p$ are twice differentiable in $l \in \R^d$, and there exist $B_1, B_2 \geq 0$ such that 
    for all $\theta \in \Theta, x,x' \in \cX, a \in \cA, t \in \{0, \ldots, T-1\}$ and $l \in \R^d$,
    \[
    \begin{aligned}
        &\left|\frac{\partial }{\partial l_i} \left( \tilde P (x^\prime \mid x, a, l)\tilde p(\theta, a,t, x, l) \right)  \right|\leq B_1, \: \forall i, \\
        &\left|\frac{\partial ^2}{\partial l_i \partial l_j} \left( \tilde P (x^\prime \mid x, a, l)\tilde p(\theta, a,t, x, l) \right) \right| \leq B_2, \: \forall i,j.
    \end{aligned}
    \] 
\end{assumption}

We first prove that the gradient of logits estimator is bounded, starting with a lemma on the true gradient of logits.
\begin{lemma}
    \label{lemma:bound-gradient-logits}
    Assume the same conditions as in Theorem \ref{theorem:perturbed-gradient}, and let Assumption \ref{assumption:grad-log-policy} hold. Then for any $t = 0, \ldots, T$, we have
    \begin{equation}
        \label{eq:bound-gradient-logits}
        \left\| \nabla_\theta l_t^\theta \right\| \leq \begin{cases}
            \frac{3 C (\beta^t - 1)}{2(\beta - 1)}, & \text{if } B_1 \neq 0, \\
            t \left\| \nabla_\theta \tilde p \right\| _\infty | \cA |, & \text{if } B_1 = 0,
        \end{cases}
    \end{equation}
    where 
    \begin{equation}
        \label{eq:definition-beta}
        \beta \coloneq \frac{1}{2} \left( 1 +B_1| \cA | + \sqrt{(1 + B_1| \cA |)^2 + 4 B_1| \cA | (d - 1)} \right).
    \end{equation}
\end{lemma}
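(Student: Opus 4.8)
The plan is to derive a one-step recursion in $t$ for $\|\nabla_\theta l_t^\theta\|$ and then solve it as a geometric series. The natural starting point is the forward (Kolmogorov) equation for the law. Writing $q_t^i := \mathbb P(X_t^\theta = x^{(i)})$, Assumption \ref{assumption:finite-state-space} gives
\[
q_{t+1}^{i'} = \sum_{i=1}^d K_t(i'\mid i)\,q_t^i,\qquad K_t(i'\mid i) := \int_\cA \tilde P(x^{(i')}\mid x^{(i)},a,l_t^\theta)\,\tilde p(\theta,a,t,x^{(i)},l_t^\theta)\,\nu_\cA(\d a),
\]
where $K_t$ is a stochastic matrix ($\sum_{i'}K_t(i'\mid i)=1$, since $\tilde p$ is a density and $\tilde P(\cdot\mid x,a,l)$ a probability). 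Differentiating in $\theta_m$, and using $l_{t,i}^\theta=\log q_t^i$ so that $\partial_{\theta_m}q_t^i = q_t^i\,\partial_{\theta_m}l_{t,i}^\theta$, I would apply the chain rule through both the explicit $\theta$-dependence of $\tilde p$ and the $l_t^\theta$-dependence of the integrand to decompose $\partial_{\theta_m}l_{t+1,i'}^\theta = \partial_{\theta_m}q_{t+1}^{i'}/q_{t+1}^{i'}$ into three pieces: a \emph{policy-score} term, a \emph{mean-field-sensitivity} term carrying the $l_t^\theta$-derivatives of $\tilde P\tilde p$, and a \emph{transport} term that propagates $\nabla_\theta l_t^\theta$ through the Bayes-reverse kernel $\hat K_t(i\mid i'):=K_t(i'\mid i)q_t^i/q_{t+1}^{i'}$.

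Next I would bound each piece in the norm $\|M\|=\max_m\sum_{i}|M_{i,m}|$. The score term is $\int_\cA \tilde P\,\tilde p\,\partial_{\theta_m}\log\tilde p\,\nu_\cA(\d a)$ averaged against $q_t$, hence bounded by $C$ pointwise via the bound on $\nabla_\theta\log p$ in Assumption \ref{assumption:finite-state-space}; the normalization constraint $\sum_i\partial_{\theta_m}q_t^i=0$, equivalently $\langle \nabla_\theta l_t^\theta e_m,\,q_t\rangle=0$, forces this contribution to integrate to zero against $q_{t+1}$, which is what prevents a spurious factor of $d$ and yields the additive constant $\tfrac{3C}{2}$. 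The transport term is governed by $\hat K_t$, which is row-stochastic and maps $q_t$-mean-zero vectors to $q_{t+1}$-mean-zero vectors; combined with the mean-zero constraint this shows it acts as a non-expansion in the relevant norm. Finally the mean-field term is controlled by $\bigl|\partial_{l_n}(\tilde P\tilde p)\bigr|\le B_1$ from Assumption \ref{assumption:grad-log-policy} together with the identity $\sum_{i'}\partial_{l_n}(\tilde P\tilde p)=0$ (obtained by summing the kernel to one), which produces the factor $B_1|\cA|$ and, upon distributing the $1/q_{t+1}^{i'}$ weights over the $d$ states, the $(d-1)$ appearing inside $\beta$.

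Assembling these estimates should yield a recursion of the form $\|\nabla_\theta l_{t+1}^\theta\|\le \tfrac{3C}{2}+\beta\,\|\nabla_\theta l_t^\theta\|$, where $\beta$ is precisely the positive root of $\beta^2-(1+B_1|\cA|)\beta-B_1|\cA|(d-1)=0$, i.e.\ the value in \eqref{eq:definition-beta}; the quadratic is exactly the bookkeeping of the interaction between the sensitivity $B_1|\cA|$ and the spreading over the $d$ states. Since $\mu_0$ is independent of $\theta$, the base case is $\nabla_\theta l_0^\theta=0$, so summing the geometric series gives $\|\nabla_\theta l_t^\theta\|\le \tfrac{3C}{2}\sum_{s=0}^{t-1}\beta^s=\tfrac{3C(\beta^t-1)}{2(\beta-1)}$. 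In the degenerate case $B_1=0$ the mean-field term vanishes, $\beta=1$, the transport is an exact non-expansion, and the score bound specializes to $\|\nabla_\theta\tilde p\|_\infty|\cA|$ per step, producing the linear bound $t\,\|\nabla_\theta\tilde p\|_\infty|\cA|$.

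I expect the main obstacle to be the uniform control of the $1/q_{t+1}^{i'}$ factors: because individual state probabilities can be arbitrarily small, the logit gradient is a priori unbounded, so the estimate must come entirely from the stochasticity of $K_t$ and the normalization constraint, with no recourse to a lower bound on $\min_i q_t^i$. Pinning down the sharp constants $\tfrac{3C}{2}$ and $\beta$ — rather than cruder bounds carrying spurious powers of $d$ — is the delicate step, and is precisely where the cancellations $\sum_i\partial_{\theta_m}q_t^i=0$ and $\sum_{i'}\partial_{l_n}(\tilde P\tilde p)=0$ have to be exploited carefully.
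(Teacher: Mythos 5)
Your route is genuinely different from the paper's: you differentiate the forward Kolmogorov equation $q_{t+1}=K_t q_t$ to get a one-step recursion, whereas the paper applies its exact policy-gradient formula (Proposition \ref{proposition:exact-policy-gradient-formula}) to the indicator reward $x\mapsto \mathbf 1_{x=x^{(i)}}$ at time $t$, which produces a \emph{cumulative} bound $\|\nabla_\theta l_t^\theta\|\le \sum_{s=0}^{t-1}\bigl(c_1\|\nabla_\theta l_s^\theta\|+\mathbf 1_{s\ge 1}c_2\|\nabla_\theta l_{s-1}^\theta\|\bigr)+tc_3$ and then invokes the discrete Gr\"onwall lemma (Lemma \ref{lemma:discrete-gronwall}). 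This difference is not cosmetic, and it is where your plan breaks. The quantity $\beta$ in \eqref{eq:definition-beta} is the root of $\beta^2-(1+B_1|\cA|)\beta-B_1|\cA|(d-1)=0$ precisely because the paper's recursion involves \emph{two} lagged terms: in the trajectory integral the state sum over $x_{s+1}$ contributes an extra factor $d$ when $s<t-1$ but not when $s=t-1$ (the endpoint is pinned to $x^{(i)}$), which is the sole origin of the $(d-1)$. A one-step recursion $x_{t+1}\le a+b\,x_t$ has no mechanism to produce the root of that quadratic as its factor $b$, nor to produce the prefactor $3/2$, which in the paper is an artifact of the Gr\"onwall lemma's conclusion $x_n\le \tfrac{3c_3(\beta^n-1)}{2(\beta-1)}$ and not of any per-step score bound. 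You are reverse-engineering the constants rather than deriving them.

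The second gap is the claimed non-expansion of the transport term. The reverse kernel $\hat K_t(i\mid i')$ is row-stochastic, which makes the transport a non-expansion in the $\sup_i$ norm; but the norm in the lemma is $\sum_i|\cdot|$, i.e.\ the $1/q$-weighted $\ell^1$ norm of $\nabla_\theta q_t$. Markov contraction does give $\sum_{i'}|\sum_i K_t(i'\mid i)\,\partial_{\theta_m}q_t^i|\le \sum_i|\partial_{\theta_m}q_t^i|$, but converting back to logit gradients reintroduces the weights $1/q_{t+1}^{i'}$, and the mean-zero constraint does not control the resulting reweighting from $1/q_t^i$ to $1/q_{t+1}^{i'}$ (a state $i'$ of small mass receiving flow from a state of large mass inflates the weighted norm arbitrarily). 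Your stated ambition to proceed ``with no recourse to a lower bound on $\min_i q_t^i$'' is not achieved by the paper either: its recursion constants all carry $\mathbf s^\theta=\sup_t\sum_{i}\mathbb P(X_t^\theta=x^{(i)})^{-1}$, and something of this kind appears unavoidable. To repair your argument you would either have to accept such a factor in the one-step rate (yielding a different, generally larger $\beta$) or switch to the paper's trajectory representation, where the weighting enters only once through $\mathbf s^\theta$ and the two-lag structure emerges naturally.
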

Before presenting the results on the estimator, we introduce the sampling and perturbation setup of MF-REINFORCE.
\paragraph{Sampling and Perturbation Setup}
Let $\theta \in \Theta$ and $\eps \in (0,1)$.
Let $(\Lambda_t)_{t = 0}^T$ be i.i.d.\ $\mathcal N(0,I_d)$ random variables.
Let $(X_t^\theta)_{t = 0}^T$ and $(Y_t^{\theta,\eps})_{t = 0}^T$ be $\cX$-valued processes such that,
conditionally on $\Lambda = (\Lambda_0,\Lambda_1,\ldots,\Lambda_{T})$,
they are distributed according to the perturbed dynamics \eqref{eq:perturbed-process}
with policy $\pi=\pi_\theta$ and initial condition $\xi$ satisfying
$\mu_0=\mathcal L(\xi)\in\mathcal P(\cX)^*$.
Let $(\alpha_t^\theta)_{t=0}^{T-1}$ and $(\alpha_t^{\theta,\eps})_{t=0}^{T-1}$ denote the associated controls of $X^\theta$ and $Y^{\theta, \eps}$ respectively.

\begin{proposition}
    \label{proposition:bias-state-distribution-gradient}
    Let Assumptions \ref{assumption:finite-state-space} and \ref{assumption:grad-log-policy} hold. 
    Let $n \in \N$ and $\eps \in (0,1)$. 
    Under the sampling and perturbation setup above, let
$\{(X^{\theta,t,k},Y^{\theta,\eps,t,k},\Lambda^{t,k}, \alpha^{\theta, t, k}, \alpha^{\theta, \eps,t ,k})\}_{k=1}^n$
be i.i.d.\ copies truncated at time $t$, for all $t = 1, \ldots, T$.
    Moreover, assume that the trajectories $ ( X^{\theta,t,k}_{\leq t}, Y^{\theta, \eps,t,k}_{\leq t}, \Lambda^{t,k}_{\leq t},)$ for $k=1,\ldots,n$ and $t = 1,\ldots,T$ are independent.
    Define $\widehat{\nabla_\theta l^\theta_0} = 0 \in \R^{d \times D}$ and, for $t = 1, \ldots, T$,
    \begin{equation}
        \label{eq:logit-gradient-estimator}
    \begin{aligned}
        \widehat{\nabla_{\theta} \mu_t^{(i)}} &= \frac{1}{n}\sum_{k = 1}^n 1_{Y^{\theta, \eps,t,k}_t = x^{(i)}}
        \left( \sum_{s = 0}^{t-1}  \eps^{-1} \Lambda_s^{t,k} \widehat{\nabla_\theta l^\theta_s} + \nabla_\theta \log \tilde \pi_\theta \left(\alpha_s^{\theta, \eps,t,k} \mid s, Y_s^{\theta, \eps,t,k}, l_s^\theta + \eps \Lambda_s^{t,k}\right) \right), \\
        \widehat{\nabla_\theta l^\theta_t} &= \left(\frac{\widehat{\nabla_{\theta} \mu_t^{(i)}}}{\mathbb P (X_t^\theta = x^{(i)})}\right)_{i = 1}^{d} \in \R^{d \times D}.
    \end{aligned}
\end{equation}
    Let $\theta \in \Theta$. If $B_1 > 0$, 
    for $t = 0, \ldots, T$, we have 
    \begin{equation}
        \label{eq:bias-logit-gradient-estimator}
        E_t(\theta, \eps) \coloneq \left\| \mathbb E \left[ \widehat{\nabla_\theta l_t^\theta} \right] - \nabla_\theta l_t^\theta \right\|
         \leq 
         \mathbf \eps \left((K_{1, \eps} + K_{2, \eps}) \beta_\eps^t - K_{1, \eps} \beta^t - K_{2, \eps} \right),
    \end{equation}
    where
    $\beta_\eps, K_{1, \eps},K_{2, \eps}$ are constants defined in \eqref{eq:constants-bias-logits} depending on $\eps$ and the problem parameters, bounded as $\eps \to 0$.
     Otherwise, $\left\| \mathbb E \left[ \widehat{\nabla_\theta l_t^\theta} \right] - \nabla_\theta l_t^\theta \right\| \leq \mathbf s^\theta \frac{L_p^\prime}{2} \eps t$ where $\mathbf s^\theta \coloneq \sup_{t \geq 0} \sum_{i=1}^d \mathbb P (X_t^\theta = x^{(i)})^{-1}$.
\end{proposition}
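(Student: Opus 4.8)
The plan is to convert the definition \eqref{eq:logit-gradient-estimator} into a forward-in-time linear recursion for $E_t(\theta,\eps)$, separating the error into a propagated part (coming from reusing the biased estimates $\widehat{\nabla_\theta l_s^\theta}$ for $s<t$) and a genuine perturbation part (coming from the finite-$\eps$ softmax--logit scheme). First I would take expectations in \eqref{eq:logit-gradient-estimator}. Because the trajectories indexed by distinct times are independent, $\widehat{\nabla_\theta l_s^\theta}$ is independent of the pair $(\mathbf 1_{Y_t^{\theta,\eps}=x^{(i)}},\Lambda_s^{t,k})$ used at step $t$, so its expectation factors out of the first sum and gives $\mathbb E[\widehat{\nabla_\theta\mu_t^{(i)}}]=\sum_{s=0}^{t-1}c_s^{(i)}\mathbb E[\widehat{\nabla_\theta l_s^\theta}]+b_t^{(i)}$, where $c_s^{(i)}\coloneq\mathbb E[\mathbf 1_{Y_t^{\theta,\eps}=x^{(i)}}\eps^{-1}\Lambda_s]$ and $b_t^{(i)}$ is the policy-score contribution. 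Applying Theorem \ref{theorem:perturbed-gradient} to the single indicator reward $\mathbf 1_{\cdot=x^{(i)}}$ placed at time $t$, and using the causality argument already displayed in Section \ref{subsection:model-free-estimator} to discard the indices $s\ge t$, produces the exact identity $\nabla_\theta V_\eps^{(i,t)}=\sum_{s=0}^{t-1}c_s^{(i)}\nabla_\theta l_s^\theta+b_t^{(i)}$ with the \emph{same} coefficients $c_s^{(i)},b_t^{(i)}$. Subtracting, dividing each row by $\mathbb P(X_t^\theta=x^{(i)})$, and using the triangle inequality in the norm $\|\cdot\|$ then yields a recursion $E_t\le\sum_{s=0}^{t-1}a_{t,s}E_s+\rho_t$, where $a_{t,s}$ gathers the weights $c_s^{(i)}/\mathbb P(X_t^\theta=x^{(i)})$ and $\rho_t\coloneq\sup_i\|\nabla_\theta V_\eps^{(i,t)}-\nabla_\theta\mu_t^{(i)}\|/\mathbb P(X_t^\theta=x^{(i)})$ is the remaining perturbation bias.

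Next I would estimate the coefficients $a_{t,s}$. The key device is Gaussian integration by parts: setting $F_i(\Lambda)=\mathbb P(Y_t^{\theta,\eps}=x^{(i)}\mid\Lambda)$, which is smooth in $\Lambda$ since the perturbation enters only through the smooth maps $l\mapsto\tilde P,\tilde p$, Stein's identity gives $c_s^{(i)}=\mathbb E[\eps^{-1}\Lambda_s F_i(\Lambda)]=\mathbb E[\eps^{-1}\nabla_{\Lambda_s}F_i(\Lambda)]$, and the chain rule rewrites $\nabla_{\Lambda_s}$ as $\eps$ times an $l$-derivative, cancelling the singular factor $\eps^{-1}$. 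Since $l_s$ enters only the one-step kernel at time $s$, $\nabla_{l_s}F_i$ reduces to the $l$-gradient of $\int_{\cA}\tilde P\,\tilde p\,\nu_\cA(\d a)$ propagated forward to time $t$, and is therefore controlled by $B_1$ and $|\cA|$. Summing the resulting bound against the $1/\mathbb P(X_t^\theta=x^{(i)})$ weights reproduces precisely the $B_1|\cA|$- and $d$-dependent quantity whose associated quadratic root is $\beta$ in Lemma \ref{lemma:bound-gradient-logits}; because the derivatives are evaluated at $l_s+\eps\Lambda_s$ rather than at $l_s$, the effective growth rate is an $\eps$-shifted version $\beta_\eps$ satisfying $\beta_\eps\to\beta$ and remaining bounded as $\eps\to0$.

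For the forcing $\rho_t$ I would make Theorem \ref{theorem:convergence-perturbed-gradient} quantitative. Writing both $\nabla_\theta V_\eps^{(i,t)}$ and the true $\nabla_\theta\mu_t^{(i)}$ in the common form $\sum_{s<t}(\cdot)\nabla_\theta l_s^\theta+(\cdot)$, with the latter recovered as the $\eps\to0$ limit, reduces $\rho_t$ to differences of coefficients evaluated at $l_s+\eps\Lambda_s$ against $l_s$. A Taylor expansion shows these are $O(\eps)$: in the kernel coefficients the $\mathbb E[\Lambda]=0$ symmetry cancels the first-order term and leaves an $O(\eps^2)$ remainder controlled by $B_2$, while the score contribution retains a genuine $O(\eps)$ term, which can be bounded directly by $L_p'$ together with the total-variation estimate $\mathbb E[d_\tv]\le\eps/2$ of Lemma \ref{lemma:tv-bound}. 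Inserting the bound $\|\nabla_\theta l_s^\theta\|\le\tfrac{3C(\beta^s-1)}{2(\beta-1)}$ of Lemma \ref{lemma:bound-gradient-logits} then exhibits $\rho_t$ as $O(\eps)$ with a $\beta^s$-weighted component and a bounded one.

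Finally I would solve the linear recursion $E_t\le a\sum_{s<t}E_s+\rho_t$ in closed form: the homogeneous solution grows like $\beta_\eps^t$, while the particular solution responding to the $\beta^s$-weighted and bounded parts of $\rho_t$ contributes the $\beta^t$ and constant terms, and the initial condition $E_0=0$ fixes the constants $K_{1,\eps},K_{2,\eps}$, giving \eqref{eq:bias-logit-gradient-estimator}. The degenerate case $B_1=0$ is then immediate: here $\nabla_l(\tilde P\tilde p)\equiv0$ forces $c_s^{(i)}=0$, the recursion collapses, and the bias is exactly the accumulated one-step perturbation error of the policy-score term, bounded by $\tfrac{L_p'}{2}\eps$ per step via $L_p'$-Lipschitzness and Lemma \ref{lemma:tv-bound}; summing over the $t$ steps and absorbing the $1/\mathbb P(X_t^\theta=x^{(i)})$ weights into $\mathbf s^\theta$ yields $\mathbf s^\theta\tfrac{L_p'}{2}\eps t$. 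I expect the main obstacle to be the quantitative control of $\rho_t$: one must simultaneously cancel the singular $\eps^{-1}$ factors through Gaussian integration by parts, exploit the $\mathbb E[\Lambda]=0$ cancellation in the kernel terms, and track how the $\beta^s$ growth of $\|\nabla_\theta l_s^\theta\|$ interacts with the geometric amplification of the recursion so that the constants $K_{1,\eps},K_{2,\eps}$ stay bounded as $\eps\to0$.
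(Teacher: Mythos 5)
Your proposal is correct in outline and follows the same architecture as the paper's proof: a forward recursion for $E_t(\theta,\eps)$ whose homogeneous part has growth rate $\beta_\eps$, an $O(\eps)$ forcing split into a score (Lipschitz $+$ Lemma \ref{lemma:tv-bound}) part and a kernel part weighted by $\|\nabla_\theta l_s^\theta\|\lesssim\beta^s$, closed by the discrete Gr\"onwall Lemma \ref{lemma:discrete-gronwall}; the $B_1=0$ case is handled identically. The one genuine methodological difference is how you extract the coefficients $c_s^{(i)}=\mathbb E[\mathbf 1_{Y_t^{\theta,\eps}=x^{(i)}}\eps^{-1}\Lambda_s]$ and tame the $\eps^{-1}$ singularity: you use Stein's identity on $F_i(\Lambda)=\mathbb P(Y_t^{\theta,\eps}=x^{(i)}\mid\Lambda)$, whereas the paper performs a change of measure back to the unperturbed trajectory via the likelihood ratio $R_t^{\theta,\eps}$ of \eqref{eq:likelihood-ratio-t}, Taylor-expands it with an explicit Lagrange remainder $\zeta_t^{\theta,\eps}$, and exploits $\mathbb E[\Lambda_s^{(i)}\Lambda_{s'}^{(j)}]=\delta_{s,s'}\delta_{i,j}$. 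The two devices are equivalent in spirit, but the paper's version yields the explicit remainder bounds of Lemma \ref{lemma:zeta-remainder} (the constants $A_{1,\eps},A_{2,\eps}$) that are needed to pin down $K_{1,\eps},K_{2,\eps},\beta_\eps$ exactly as stated; your Stein route would require an analogous quantitative second-order bound on $\nabla_{\Lambda_s}F_i$. Your intermediate insertion of the perturbed gradient $\nabla_\theta V_\eps^{(i,t)}$ for the indicator reward is a clean reorganization of the paper's direct split $b_t^{(i)}=b_{t,1}^{(i)}+b_{t,2}^{(i)}$ and is valid by Theorem \ref{theorem:perturbed-gradient} plus causality. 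One small imprecision to watch: you assert the kernel-coefficient differences are $O(\eps^2)$ after the $\mathbb E[\Lambda]=0$ cancellation, but the estimator carries an $\eps^{-1}$ prefactor on $\Lambda_s$, so the net contribution is $O(\eps)$ multiplied by $\|\nabla_\theta l_s^\theta\|$ and by $E_s$ itself --- this is precisely the source of the $\eps K_{1,\eps}(\beta_\eps^t-\beta^t)$ term and of the strict gap $\beta_\eps>\beta$; your final accounting does land on the right form, so this is a bookkeeping slip rather than a gap.
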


\begin{remark}
\label{remark:bias-logit-estimation}
    As $\eps \to 0$, the dominating term for the bias is of order $\mathcal O (\eps)$. However, according to \eqref{eq:bias-logit-gradient-estimator}, the bias deteriorates exponentially with the time step $t$ considered when $B_1 > 0$. In practice, one might be able to counter this `curse of time' by discounting future rewards. In the case when $B_1 = 0$, the time dependence is linear. 
    This illustrates the challenge of designing policy gradient algorithms for mean-field dynamics, as $B_1=0$ corresponds to having no mean-field dependence in the state transitions.
\end{remark}

In order to properly bound the bias of the policy gradient estimator, we need to introduce the following regularity assumption on the reward.

\begin{assumption}
    \label{assumption:lipschitz-reward}
    The reward functions $r$ and $g$ are globally Lipschitz in the measure argument, i.e.,
    there exist constants $L_r, L_g > 0$ such that, for all $x \in \cX, a \in \cA, \mu, \mu^\prime \in \cP(\cX)$,
    \[
    | r(x,a,\mu) - r(x,a,\mu^\prime) | \leq L_r d_\tv (\mu, \mu^\prime), \quad | g(x,\mu) - g(x,\mu^\prime) | \leq L_g d_\tv (\mu, \mu^\prime).
    \]
    Moreover, $\tilde r$ and $\tilde g$ are twice differentiable in $l \in \R^d$, and there exist $M_1, M_2 > 0$ such that 
    for all $x \in \cX, a \in \cA$ and $l \in \R^d$,
    \[
    \begin{aligned}
        &\left| \frac{\partial}{\partial l_i} \tilde r(x,a,l) \right| \leq M_1, \: \left| \frac{\partial}{\partial l_i} \tilde g(x,l) \right| \leq M_1, \quad  \forall i, \\
        &\left| \frac{\partial ^2}{\partial l_i \partial l_j} \tilde r(x,a,l) \right| \leq M_2,\: \left|\frac{\partial ^2}{\partial l_i \partial l_j} \tilde g(x,l) \right| \leq M_2,   \quad  \forall i,j.
    \end{aligned}
    \]
\end{assumption}

We are now ready to state the main theorem of this section, which gives an upper bound on the bias
of the policy gradient estimator.

\begin{theorem}
\label{theorem:main-bias-result}
Let Assumptions \ref{assumption:finite-state-space}, \ref{assumption:grad-log-policy} and \ref{assumption:lipschitz-reward} hold.
Let $\eps \in (0,1)$ and $\theta \in \Theta$.
Let $\widehat{\nabla_\theta l_t^\theta}$ be the gradient of logits estimator defined in \eqref{eq:logit-gradient-estimator}.
Let $X^\theta, Y^{\theta, \eps}, \Lambda, \alpha^{\theta}, \alpha^{\theta, \eps}$ be given by the sampling and perturbation scheme above.
Assume that $(X_{\leq T}^\theta,Y_{\leq T}^{\theta, \eps},\Lambda_{\leq T}, \alpha^{\theta}_{\leq T - 1}, \alpha^{\theta, \eps}_{\leq T - 1})$ are independent of $(\widehat{\nabla_\theta l_{\leq T}^\theta})$.
Let 
\begin{equation}
    \label{eq:policy-gradient-estimator}
    \widehat{\nabla V}(\theta) = \sum_{t=0}^{T} \left(\eps^{-1} \Lambda_t \widehat{\nabla_\theta l^\theta_t} + \mathbf 1 _{t < T}\nabla_\theta \log \tilde p (\theta, \alpha_t^{\theta, \eps}, t,Y_t^\theta, l_t^\theta + \eps \Lambda_t) \right)G_t^{\theta, \eps}, 
\end{equation}
where $G_t^{\theta, \eps} = \sum_{s=t}^{T-1} \tilde r(Y_s^{\theta, \eps}, \alpha_s^{\theta, \eps}, l^\theta_s + \eps \Lambda_s) + \tilde g(Y_T^{\theta, \eps}, l_T^\theta + \eps \Lambda_T)$.
Then, 
\begin{equation}
    \label{eq:theorem-bound-bias-pg-estimator}
    \left\| \mathbb E \left[\widehat{\nabla V}(\theta) \right] - \nabla_\theta V (\mu_0, \theta)\right\|_\infty \leq \eps \left(k_{1,\eps} \beta_\eps^T - k_{2,\eps} \beta^T + k_{3, \eps} + \eps k_{4,\eps}\right),
\end{equation}
where $k_{1, \eps}, k_{2, \eps}, k_{3, \eps}, k_{4, \eps}$ are constants defined in \eqref{eq:constant-bias-pg-expression} that depend on $\eps$ and the problem parameters, which are bounded as $\eps \to 0$.
\end{theorem}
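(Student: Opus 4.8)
The plan is to bound the total bias by the triangle inequality
\[
\left\|\mathbb{E}[\widehat{\nabla V}(\theta)] - \nabla_\theta V(\mu_0,\theta)\right\|_\infty
\le \left\|\mathbb{E}[\widehat{\nabla V}(\theta)] - \nabla_\theta V_\eps(\mu_0,\theta)\right\|_\infty
+ \left\|\nabla_\theta V_\eps(\mu_0,\theta) - \nabla_\theta V(\mu_0,\theta)\right\|_\infty,
\]
so that the error splits into an \emph{estimation bias} (I), coming from using the plug-in logit gradients $\widehat{\nabla_\theta l_t^\theta}$ in place of the exact $\nabla_\theta l_t^\theta$, and an \emph{intrinsic perturbation bias} (II), measuring the gap between the perturbed and true gradients. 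Here I use that $\nabla_\theta V_\eps$ is given exactly by the formula \eqref{eq:policy-gradient-formula} of Theorem~\ref{theorem:perturbed-gradient}.

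For (I), I would first observe that the estimator \eqref{eq:policy-gradient-estimator} coincides with the integrand of \eqref{eq:policy-gradient-formula} except that $\widehat{\nabla_\theta l_t^\theta}$ replaces $\nabla_\theta l_t^\theta$. Taking expectations, the score terms $\nabla_\theta\log\tilde p$ are identical in $\mathbb{E}[\widehat{\nabla V}(\theta)]$ and $\nabla_\theta V_\eps(\mu_0,\theta)$ and cancel, while the independence of $(\widehat{\nabla_\theta l_{\le T}^\theta})$ from $(\Lambda,Y^{\theta,\eps},\alpha^{\theta,\eps})$ lets me replace $\widehat{\nabla_\theta l_t^\theta}$ by its expectation and factor out the deterministic matrix $D_t$, so that
\[
\mathbb{E}[\widehat{\nabla V}(\theta)] - \nabla_\theta V_\eps(\mu_0,\theta)
= \sum_{t=0}^{T} \eps^{-1}\,\mathbb{E}\!\left[\Lambda_t\, D_t\, G_t^{\theta,\eps}\right],
\qquad D_t := \mathbb{E}[\widehat{\nabla_\theta l_t^\theta}] - \nabla_\theta l_t^\theta,
\]
with $\|D_t\| = E_t(\theta,\eps)$. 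The delicate point is the factor $\eps^{-1}$, which I would neutralize by a Gaussian integration-by-parts. Conditioning on $\Lambda$ and setting $\Phi_t(\Lambda) := \mathbb{E}[G_t^{\theta,\eps}\mid\Lambda]$, all dependence of $\Phi_t$ on $\Lambda_t$ enters through the perturbed argument $l_t^\theta+\eps\Lambda_t$ in $\tilde p,\tilde P,\tilde r$, so Stein's identity yields
\[
\mathbb{E}[(\Lambda_t)_i\, G_t^{\theta,\eps}] = \mathbb{E}\!\left[\partial_{(\Lambda_t)_i} G_t^{\theta,\eps}\right] = \eps\,\mathbb{E}\!\left[\partial_{l_i}\Phi_t\right],
\]
which is $\mathcal{O}(\eps)$ uniformly thanks to the first-derivative bounds $M_1,B_1$ and $|G_t^{\theta,\eps}|\le (T+1)M_0$. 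The prefactor $\eps^{-1}$ is thereby cancelled, giving a term-wise bound of order $E_t(\theta,\eps)$.

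I would then substitute the estimate \eqref{eq:bias-logit-gradient-estimator} of Proposition~\ref{proposition:bias-state-distribution-gradient}, $E_t(\theta,\eps)\le \eps\big((K_{1,\eps}+K_{2,\eps})\beta_\eps^t - K_{1,\eps}\beta^t - K_{2,\eps}\big)$, and sum the geometric series $\sum_{t=0}^{T}\beta_\eps^t$ and $\sum_{t=0}^{T}\beta^t$; this produces exactly the $\eps k_{1,\eps}\beta_\eps^T$, $\eps k_{2,\eps}\beta^T$ and constant $\eps k_{3,\eps}$ contributions of \eqref{eq:theorem-bound-bias-pg-estimator}. For (II), I would quantify the qualitative convergence of Theorem~\ref{theorem:convergence-perturbed-gradient}: applying the same Stein reduction to \eqref{eq:policy-gradient-formula} converts each $\eps^{-1}\Lambda_t$-weighted term into a Gaussian-smoothed measure derivative $\mathbb{E}[\partial_l(\cdot)(l_t^\theta+\eps\Lambda)]$, and a second-order Taylor expansion in $\eps$ in which the first-order contribution vanishes by $\mathbb{E}[\Lambda_t]=0$ leaves a residual controlled by the second-derivative bounds $M_2,B_2$ (together with the $\mathcal{O}(\eps)$ total-variation estimate of Lemma~\ref{lemma:tv-bound} for the law of $Y^{\theta,\eps}$ relative to $X^\theta$); this residual is the $\eps^2 k_{4,\eps}$ term. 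Adding the bounds for (I) and (II) and taking $\|\cdot\|_\infty$ throughout gives \eqref{eq:theorem-bound-bias-pg-estimator}.

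The main obstacle is the integration-by-parts step for (I): showing, with a constant uniform in $\eps$ and controlled in $t$, that $\mathbb{E}[(\Lambda_t)_i G_t^{\theta,\eps}]=\mathcal{O}(\eps)$ even though $G_t^{\theta,\eps}$ depends on $\Lambda_t$ not only directly but also through the whole downstream trajectory $(Y_s^{\theta,\eps},\alpha_s^{\theta,\eps})_{s\ge t}$. Making the conditional map $\Phi_t$ genuinely differentiable in $\Lambda_t$ and bounding its derivative through the discrete dynamics is what prevents the $\eps^{-1}$ prefactor from destroying the $\mathcal{O}(\eps)$ bias, and it is also the step where the exponential-in-$t$ growth inherited from Proposition~\ref{proposition:bias-state-distribution-gradient} must be carefully reconciled with the geometric summation.
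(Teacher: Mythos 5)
Your decomposition is genuinely different from the paper's: you split the bias through the perturbed gradient, $\mathbb E[\widehat{\nabla V}(\theta)]-\nabla_\theta V_\eps(\mu_0,\theta)$ plus $\nabla_\theta V_\eps(\mu_0,\theta)-\nabla_\theta V(\mu_0,\theta)$, whereas the paper never passes through $\nabla_\theta V_\eps$ at all — it compares $\mathbb E[\widehat{\nabla V}(\theta)]$ directly against the exact decomposition $\mathrm{RF}+\mathrm{MD}+\mathrm{MFD}$ of Proposition \ref{proposition:exact-policy-gradient-formula}, splitting into $\Delta_{\mathrm{RF}}$ and $\Delta_{\mathrm{MD}}+\Delta_{\mathrm{MFD}}$ and neutralizing the $\eps^{-1}$ factors via the likelihood-ratio change of measure \eqref{eq:likelihood-ratio-t} and its explicit expansion \eqref{eq:likelihood-ratio-expansion} with remainder controlled in Lemma \ref{lemma:zeta-remainder}. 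Your part (I) is clean: independence of $\widehat{\nabla_\theta l^\theta_{\le T}}$ from the trajectory does let you factor out $D_t$, and your Gaussian integration by parts is the same cancellation mechanism as the paper's expansion of $R_T^{\theta,\eps}$, executed through the trajectory density rather than the density ratio; the ``main obstacle'' you flag is precisely the $B_1$-bound computation carried out in the proofs of Lemma \ref{lemma:bound-gradient-logits} and Lemma \ref{lemma:zeta-remainder}, so it is resolvable under the stated assumptions, and the subsequent substitution of \eqref{eq:bias-logit-gradient-estimator} and geometric summation matches the paper's final step. The price of your route is part (II): Theorem \ref{theorem:convergence-perturbed-gradient} is purely qualitative, so you must re-derive the entire convergence argument with explicit rates, which amounts to the same work the paper performs inside $\Delta_{\mathrm{RF}}$, $\Delta_{\mathrm{MD}}$, $\Delta_{\mathrm{MFD}}$.

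One quantitative claim is wrong: the intrinsic perturbation bias (II) is not $\mathcal O(\eps^2)$. After your Stein reduction the relevant gap is $\mathbb E[\partial_l\Phi_t(l_t^\theta+\eps\Lambda_t)]-\partial_l\Phi_t(l_t^\theta)$; the vanishing of the first-order term via $\mathbb E[\Lambda_t]=0$ would push the remainder to $\mathcal O(\eps^2)$ only if third derivatives of $\tilde r,\tilde g,\tilde P\tilde p$ in $l$ were bounded, which Assumptions \ref{assumption:grad-log-policy} and \ref{assumption:lipschitz-reward} do not provide; with only $M_2,B_2$ available the remainder is $\mathcal O(\eps)$. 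This is consistent with the paper, where the analogous contributions ($A_3\eps$ and $A_{5,\eps}\eps$, $A_{6,\eps}\eps$, $A_{7,\eps}\eps$) sit at order $\eps$ inside $k_{3,\eps}$, and only the very specific term $\Delta_{\mathrm{RF}}^{(2)}$ — where the zero conditional mean of the score kills the first-order part of $R_T^{\theta,\eps}$, leaving only $\zeta_T^{\theta,\eps}$ — reaches $\mathcal O(\eps^2)$ and produces $k_{4,\eps}$. The slip does not break the theorem, since the overall bound remains $\mathcal O(\eps)$ and your (II) contribution simply lands in $k_{3,\eps}$ rather than $k_{4,\eps}$, but your accounting of where the $\eps^2 k_{4,\eps}$ term originates is not correct.
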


\begin{remark}
    The bias in the gradient of logits \eqref{eq:bias-logit-gradient-estimator} contributes transparently to the bias in the overall policy gradient estimator. As $\eps \to 0$, the dominating term in \eqref{eq:theorem-bound-bias-pg-estimator} is of order $\mathcal O (\eps)$, but we still observe a `curse of time'.
\end{remark}

\subsection{Error of the Estimator}\label{subsection:error}
In this section, we provide an upper bound on the per-coordinate mean squared error of the policy gradient estimator $\widehat{\nabla V}(\theta)$ defined in Theorem \ref{theorem:main-bias-result}, 
corresponding to the output of Algorithm \ref{alg:reinforce}.
We start by giving an upper bound on the second moment of the estimator $\widehat{\nabla_\theta l_t^\theta}$ defined in \eqref{eq:logit-gradient-estimator}.
\begin{proposition}
\label{proposition:l2-logit-gradient-estimator}
Consider the same setting as in Proposition \ref{proposition:bias-state-distribution-gradient}.
Define $\Sigma_{t}^{\theta, \eps}$ as the $d \times D$ matrix with entries 
\[
\left(\Sigma_{t}^{\theta,\eps} \right)_{i,j} \coloneq \var \left[ \left( \widehat{\nabla_\theta l_t^\theta} \right)_{i,j} \right], \quad i = 1,\ldots, d, j=1,\ldots, D.
\]
Then, for all $t = 0, \ldots, T$, $i = 1, \ldots, d$ and $j = 1, \ldots, D$,
\begin{equation}
\label{eq:variance-logit-gradient-bound}
\left\| \Sigma_{t}^{\theta, \eps} \right\| \leq  \frac{\gamma_{1, \eps}}{n \eps^2}  \left(1 + \frac{\gamma_{2} T}{\eps^2}\right)^t,
\end{equation}
where $\gamma_{1, \eps}, \gamma_2$ are constants defined in \eqref{eq:constants-variance-logit-gradient-estimator} that depend on $\eps$ and the problem parameters. 
\end{proposition}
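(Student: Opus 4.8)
\section*{Proof proposal for Proposition \ref{proposition:l2-logit-gradient-estimator}}

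The plan is to set up a discrete Grönwall recursion for $a_t := \|\Sigma_t^{\theta,\eps}\|$ that cleanly separates the \emph{fresh} Monte Carlo variance (damped by $1/n$) from the variance \emph{propagated} through the recursively reused estimates $\widehat{\nabla_\theta l_s^\theta}$, $s<t$. First I would pass to the unnormalized estimator: writing $(Z_t^{(i),k})$ for the $k$-th summand of $\widehat{\nabla_\theta\mu_t^{(i)}}$, we have $(\Sigma_t^{\theta,\eps})_{i,j} = \var[(\widehat{\nabla_\theta\mu_t^{(i)}})_j]/\mathbb P(X_t^\theta=x^{(i)})^2$, so $\|\Sigma_t^{\theta,\eps}\| = \sup_j\sum_i \var[(\widehat{\nabla_\theta\mu_t^{(i)}})_j]/\mathbb P(X_t^\theta=x^{(i)})^2$. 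The structural fact to exploit throughout is that $\sum_i 1_{Y_t^{\theta,\eps}=x^{(i)}}=1$, which collapses the sum over states once the common ($i$-independent) factors are pulled out. The crux of the decomposition is to condition on $\mathcal E := \sigma(\widehat{\nabla_\theta l_s^\theta}:s<t)$: by the independence assumption of Proposition \ref{proposition:bias-state-distribution-gradient}, the batch of trajectories indexed by time $t$ is independent of all trajectories carrying a smaller time index, hence of $\mathcal E$, so the $n$ samples are conditionally i.i.d.\ given $\mathcal E$, and the law of total variance gives $\var[(\widehat{\nabla_\theta\mu_t^{(i)}})_j] = \frac{1}{n}\mathbb E[\var[(Z_t^{(i)})_j\mid\mathcal E]] + \var[\mathbb E[(Z_t^{(i)})_j\mid\mathcal E]]$.

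For the fresh term, write $(Z_t^{(i)})_j = 1_{Y_t^{\theta,\eps}=x^{(i)}}W_j$ with $W_j = \sum_{s<t}\big(\eps^{-1}(\Lambda_s\widehat{\nabla_\theta l_s^\theta})_j + (\nabla_\theta\log\tilde p)_j\big)$, split $W_j$ into its Gaussian part and its score part via $(a+b)^2\le 2a^2+2b^2$, and note that conditionally on $\mathcal E$ the centred Gaussians $\Lambda_s$ are independent of the frozen $\widehat{\nabla_\theta l_s^\theta}$, so the cross terms in $s\neq s'$ vanish and $\mathbb E[(\Lambda_s)_m(\Lambda_s)_{m'}]=\delta_{mm'}$ yields $\mathbb E[W_j^2]\le 2\eps^{-2}\sum_{s<t}\sum_m\mathbb E[(\widehat{\nabla_\theta l_s^\theta})_{m,j}^2]+2T^2C^2$, with $C$ the score bound of Assumption \ref{assumption:finite-state-space}. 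Here $\sum_m\mathbb E[(\widehat{\nabla_\theta l_s^\theta})_{m,j}^2] = \sum_m(\Sigma_s^{\theta,\eps})_{m,j} + \sum_m\mathbb E[\widehat{\nabla_\theta l_s^\theta}]_{m,j}^2 \le \|\Sigma_s^{\theta,\eps}\| + \bar B^2$, where $\bar B$ bounds $\|\mathbb E[\widehat{\nabla_\theta l_s^\theta}]\|$ uniformly via the bias estimate of Proposition \ref{proposition:bias-state-distribution-gradient} together with the a priori bound on $\|\nabla_\theta l_s^\theta\|$ from Lemma \ref{lemma:bound-gradient-logits}. Summing $\mathbb E[\var[(Z_t^{(i)})_j\mid\mathcal E]]\le\mathbb E[1_{Y_t^{\theta,\eps}=x^{(i)}}W_j^2]$ over $i$ against the weights $\mathbb P(X_t^\theta=x^{(i)})^{-2}\le p_{\min}^{-2}$ (with $p_{\min}=\min_{t,i}\mathbb P(X_t^\theta=x^{(i)})>0$ since the flow stays in $\cP(\cX)^*$) collapses the state sum and produces a contribution of the form $\frac{1}{n\eps^2}(\mathrm{const}) + \frac{1}{n\eps^2}(\mathrm{const})\sum_{s<t}\|\Sigma_s^{\theta,\eps}\|$.

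For the propagated term, conditioning freezes the $\widehat{\nabla_\theta l_s^\theta}$ and, since the current trajectory is centred in each $\Lambda_s$, the conditional mean is affine in the frozen estimates: $\mathbb E[(Z_t^{(i)})_j\mid\mathcal E] = \sum_{s<t}\sum_m(\widehat{\nabla_\theta l_s^\theta})_{m,j}\,\eps^{-1}\mathbb E[1_{Y_t^{\theta,\eps}=x^{(i)}}(\Lambda_s)_m] + \mathrm{const}$. Bounding $|\eps^{-1}\mathbb E[1_{Y_t^{\theta,\eps}=x^{(i)}}(\Lambda_s)_m]|\le \eps^{-1}\sqrt{\mathbb P(Y_t^{\theta,\eps}=x^{(i)})}$ by Cauchy--Schwarz, and applying Cauchy--Schwarz once more in the $(s,m)$ indices, gives $\var[\mathbb E[(Z_t^{(i)})_j\mid\mathcal E]]\le t\,d\,\eps^{-2}\mathbb P(Y_t^{\theta,\eps}=x^{(i)})\sum_{s<t}\|\Sigma_s^{\theta,\eps}\|$. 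This term carries no explicit $1/n$, but each $\|\Sigma_s^{\theta,\eps}\|$ already does, so the $1/n$ survives by induction. Summing over $i$ against $\mathbb P(X_t^\theta=x^{(i)})^{-2}$ and bounding $t\le T$ yields a recursion coefficient $\gamma_2 T/\eps^2$, with $\gamma_2$ collecting the factor $d$ and $\sup_{t}\sum_i \mathbb P(Y_t^{\theta,\eps}=x^{(i)})/\mathbb P(X_t^\theta=x^{(i)})^2$, finite since the flow lies in $\cP(\cX)^*$.

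Collecting both bounds gives $a_t \le \frac{\gamma_{1,\eps}}{n\eps^2} + \frac{\gamma_2 T}{\eps^2}\sum_{s=0}^{t-1}a_s$ with $a_0 = 0$ (as $\widehat{\nabla_\theta l_0^\theta}=0$ deterministically), where $\gamma_{1,\eps}$ absorbs the non-recursive fresh contributions; the discrete Grönwall inequality then closes the induction to give $a_t\le \frac{\gamma_{1,\eps}}{n\eps^2}\big(1+\frac{\gamma_2 T}{\eps^2}\big)^t$, which is \eqref{eq:variance-logit-gradient-bound}. I expect the main obstacle to be the law-of-total-variance bookkeeping in the middle steps: correctly identifying which contributions are damped by $1/n$ and which merely inherit it through the earlier $\Sigma_s^{\theta,\eps}$, and verifying that the propagated-variance recursion coefficient is only of order $\eps^{-2}$. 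The latter rests on the fact that at the level of second moments one cannot exploit the $\mathcal O(\eps)$ smallness of $\mathbb E[1_{Y_t^{\theta,\eps}=x^{(i)}}(\Lambda_s)_m]$, so the $\eps^{-1}$ normalization must simply be absorbed, squared, into $\gamma_2$.
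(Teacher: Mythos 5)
Your proposal is correct and follows essentially the same route as the paper's proof: law of total variance conditioned on the earlier estimators $\widehat{\nabla_\theta l_s^\theta}$, the conditional i.i.d.\ structure of the time-$t$ batch to extract the $1/n$ factor on the fresh term, the affine (in the frozen estimates) form of the conditional mean to produce the propagated-variance recursion with coefficient of order $\eps^{-2}$, and a discrete Gr\"onwall closure. The only differences are cosmetic bookkeeping choices (you re-inject the second moment of earlier estimators into the recursion and use a direct induction rather than Lemma \ref{lemma:discrete-gronwall}, yielding slightly different but equally valid constants $\gamma_{1,\eps},\gamma_2$).
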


\begin{remark}
\label{remark:variance-logit-gradient-estimator}
    The $n^{-1}$ factor is expected. As $\eps \to 0$, $\gamma_{1,\eps}$ is of order $\eps^{-2}$ (see the definition of $\gamma_{1, \eps}$ in \eqref{eq:constants-variance-logit-gradient-estimator}). Hence the overall dependence in $\eps$ of the bound \eqref{eq:variance-logit-gradient-bound} is at worst $\eps^{-2T - 4}$.  Here again, we observe a curse of time.
    \end{remark}

\begin{theorem}
\label{theorem:main-mse-result}
Let Assumptions \ref{assumption:finite-state-space}, \ref{assumption:grad-log-policy} and \ref{assumption:lipschitz-reward} hold.
Let $\eps \in (0,1)$, $\theta \in \Theta$, $n \in \mathbb N$ and $N \in \mathbb N$.
Let $\widehat{\nabla V}(\theta)^{(1)}, \ldots, \widehat{\nabla V}(\theta)^{(N)}$ be $N$ independent copies of the policy gradient estimator defined in \eqref{eq:policy-gradient-estimator},
where each estimator $ \widehat{\nabla V}(\theta)^{(k)}$ is computed using an independent version of the estimators $\left(\widehat{\nabla_\theta l_t^\theta} \right)_{t=0}^T$ defined in \eqref{eq:logit-gradient-estimator}.
Let $\widehat{\nabla V}(\theta) = N^{-1} \sum_{k=1}^N \widehat{\nabla V}(\theta)^{(k)}$ be the averaged estimator.
Then, for all $\theta \in \Theta$,
\begin{equation}
\label{eq:mse-bound}
    \mathrm{MSE}(\theta, N, n, \eps) \coloneq \max_{j = 1, \ldots, D}\mathbb E \left[\left| \widehat{\nabla V}(\theta)_j - \nabla_\theta V (\mu, \theta)_j \right|^2 \right] \leq  \mathcal K_{1, \eps} \eps^2 + \frac{\mathcal K_{2, \eps}}{ N \eps^2} + \frac{\mathcal K_{3, \eps}}{n N \eps^4}  \sum_{t = 0}^T  \mathcal{C}_t\eps^{-2 t},
\end{equation}
where
\begin{equation}
    \label{eq:constants-main-mse-result}
    \begin{aligned}
  \mathcal K_{1, \eps} &= \left( k_{1,\eps}  \beta_\eps^T - k_{2,\eps} \beta^T + k_{3,\eps} + k_{\eps} \eps \right)^2, \\ 
  \mathcal K_{2, \eps} &=  2 (T + 1)^3 M_0^2 \sum_{t = 0}^T \left(E_t(\theta, \eps) + \left\| \nabla_\theta l_t^\theta \right\| +  \eps C\right),\\
  \mathcal K_{3,\eps} & =  6 (T + 1)^3 M_0^2 d  \sup_{0 \leq t \leq T} \mathbb E \left[ \left(\sum_{s = 0}^{t-1} \sum_{i^\prime = 1}^d  \left|\widehat{\nabla_\theta l_s^\theta}\right|_{i^\prime,j}  + \eps C\right)^2 \right] \sum_{i=1}^d \mathbb P \left( X_t^\theta = x^{(i)} \right)^{-2}, \\
  \mathcal C_t &= \binom{T + 1}{t + 1}\left(   d T \sup_{0 \leq t \leq T} \sum_{i=1}^d \mathbb P \left( X_t^\theta = x^{(i)} \right)^{-2} \right)^t.
\end{aligned} 
\end{equation}
\end{theorem}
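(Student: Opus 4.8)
The plan is to bound the per-coordinate MSE by a bias--variance decomposition and to feed the two pieces into the results already established. Fix a coordinate $j \in \{1, \ldots, D\}$ and abbreviate $Z := \widehat{\nabla V}(\theta)^{(1)}_j$. Since the $N$ copies $\widehat{\nabla V}(\theta)^{(k)}$ are i.i.d., the averaged estimator satisfies the exact identity
\[
\mathbb E\left[ \left| \widehat{\nabla V}(\theta)_j - \nabla_\theta V(\mu,\theta)_j \right|^2 \right]
= \left( \mathbb E[\widehat{\nabla V}(\theta)_j] - \nabla_\theta V(\mu,\theta)_j \right)^2 + \frac{1}{N}\var[Z].
\]
The squared-bias term is controlled directly by Theorem \ref{theorem:main-bias-result}: squaring \eqref{eq:theorem-bound-bias-pg-estimator} and recognizing the expression \eqref{eq:constants-main-mse-result} for $\mathcal K_{1,\eps}$ yields the contribution $\mathcal K_{1,\eps}\eps^2$. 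The entire remaining work is therefore to bound the single-copy variance $\var[Z]$, which will produce the last two terms, each carrying the expected $N^{-1}$ factor.

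The key structural device for $\var[Z]$ is the independence, assumed in the statement, between the main trajectory $(X^\theta, Y^{\theta,\eps}, \Lambda, \alpha^\theta, \alpha^{\theta,\eps})$ and the family of logit-gradient estimators $W := (\widehat{\nabla_\theta l_t^\theta})_{t=0}^T$. I would split the estimator \eqref{eq:policy-gradient-estimator} into a score part $\sum_t \mathbf 1_{t<T}\nabla_\theta \log \tilde p\, G_t^{\theta,\eps}$ and a logit part $\sum_t \eps^{-1}\Lambda_t \widehat{\nabla_\theta l_t^\theta}\, G_t^{\theta,\eps}$, and then apply the law of total variance conditioning on $W$:
\[
\var[Z]= \mathbb E\big[ \var[\,Z \mid W \,] \big]+ \var\big[ \mathbb E[\,Z \mid W \,] \big].
\]
For the inner (trajectory) variance, $W$ is frozen and one bounds the second moment using that the rewards are bounded by $M_0$ so that $|G_t^{\theta,\eps}| \le (T+1)M_0$, that $\nabla_\theta \log \tilde p$ is bounded by $C$ (Assumption \ref{assumption:finite-state-space}), that $\Lambda_t \sim \mathcal N(0,I_d)$ has unit coordinate variance, and that $\|\mathbb E[\widehat{\nabla_\theta l_t^\theta}]\| \le E_t(\theta,\eps) + \|\nabla_\theta l_t^\theta\|$ by Proposition \ref{proposition:bias-state-distribution-gradient} and Lemma \ref{lemma:bound-gradient-logits}; the $\eps^{-1}$ prefactor produces the $\eps^{-2}$ scaling and collects exactly into the $\mathcal K_{2,\eps}/(N\eps^2)$ term. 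For the outer variance, only the fluctuations of $W$ survive, so the relevant quantities are the coordinatewise variances $\sum_i \var[(\widehat{\nabla_\theta l_t^\theta})_{i,j}] \le \|\Sigma_t^{\theta,\eps}\|$ controlled by Proposition \ref{proposition:l2-logit-gradient-estimator}; the explicit $\eps^{-2}$ prefactor together with the $\eps^{-2}$ already present inside the variance of $\widehat{\nabla_\theta l_t^\theta}$ gives the $\eps^{-4}$ scaling and the $n^{-1}$ factor, while the recursive dependence of $\widehat{\nabla_\theta l_t^\theta}$ on $\widehat{\nabla_\theta l_s^\theta}$ for $s<t$ compounds across time steps to yield the binomial factor $\binom{T+1}{t+1}$ and the power $(\,\cdot\,)^t$ appearing in $\mathcal C_t$.

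I expect the main obstacle to be precisely this last propagation step. The logit-gradient estimator is defined recursively in time, each level multiplying the previous one by a weight of order $\eps^{-2}$ (through the explicit $\eps^{-1}\Lambda_s$ factor) and accumulating a per-step variance from Proposition \ref{proposition:l2-logit-gradient-estimator}, so controlling $\mathbb E\big[(\sum_{s<t}\sum_{i'}|(\widehat{\nabla_\theta l_s^\theta})_{i',j}| + \eps C)^2\big]$ uniformly in $t$ and then summing over $t$ requires carefully unrolling this compounded growth, which is what surfaces in the definition of $\mathcal K_{3,\eps}$ and in the geometric-type sum $\sum_t \mathcal C_t \eps^{-2t}$. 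A secondary difficulty is that the cross-term between the score part and the logit part does not simply vanish, since $\Lambda_t$ enters both the explicit weight and, through the perturbed trajectory, the return $G_t^{\theta,\eps}$; it must be bounded rather than discarded. Once every $\eps$-dependent constant is absorbed into the stated quantities $\mathcal K_{1,\eps},\mathcal K_{2,\eps},\mathcal K_{3,\eps}$ and $\mathcal C_t$, collecting the three contributions gives exactly \eqref{eq:mse-bound}; the argument is therefore conceptually a routine variance analysis whose difficulty is entirely in the bookkeeping of the recursion.
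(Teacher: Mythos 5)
Your proposal follows essentially the same route as the paper: a bias--variance decomposition with the squared bias handled by Theorem \ref{theorem:main-bias-result}, the single-copy variance split into the score part and the logit part (with the cross-term absorbed via a crude $2\var+2\var$ bound rather than computed), the independence of the trajectory from $(\widehat{\nabla_\theta l_t^\theta})_t$ exploited to separate the variance of the product, and Proposition \ref{proposition:l2-logit-gradient-estimator} supplying the $\eps^{-2t}$ compounding whose summation over $t$ produces the binomial coefficients in $\mathcal C_t$. The only cosmetic difference is that you phrase the independence step as a law of total variance conditioning on the logit-gradient estimators, whereas the paper applies the variance-of-a-product formula for independent factors directly; these are equivalent, so the plan is correct and matches the paper's proof.
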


\begin{remark}
    \label{remark:asymptotic-bound-mse}
    Notice that $\mathcal K_{2, \eps}, \mathcal K_{3, \eps}$ are finite due to \eqref{eq:bias-logit-gradient-estimator} and \eqref{eq:variance-logit-gradient-bound} respectively.
    The bound \eqref{eq:mse-bound} provides practical insights into the influence of the hyperparameters $n,N$ and $\eps$.
    First, one chooses $\eps$ small enough for the squared bias term $K_{1, \eps} \eps^2$. 
    Then, one chooses $N$, i.e.\ the number of independent trajectories for the policy gradient estimation, to control $\mathcal K_{2, \eps} / ( N \eps^2 )$. 
    Finally, to further help control the high-order term $\eps^{- 2T - 4}$, one can choose to increase $n$, i.e.\ the number of independent trajectories per time-step in the computation of $\widehat{\nabla_\theta}l_t^\theta$, to be large enough.
    Since the total sample complexity for generating $\widehat{\nabla V}(\theta)$ is $n N T^2$,  \eqref{eq:mse-bound} suggests that one should take $N$ as large as possible and $n=1$ to minimize the MSE for a fixed computational budget. In practice, if time complexity is the main constraint, one can parallelize the simulation of $n$ independent trajectories for the gradient-of-logits estimation $\widehat{\nabla_\theta}l_t^\theta$, which allows to increase $n$ to further reduce the variance without increasing the total computation time.
\end{remark}

\section{Numerical Experiments}
\label{section:numerical-experiments}

\subsection{Two-State Two-Action Toy Problem}

The first experiment is a simple two-state two-action mean-field control problem. 
It was previously considered by \cite{gu2023dynamic} in the context of mean-field $Q$-learning.
The action space is $\cA = \{ \st, \mv \}$ and the state space is $\cX = \{0,1\}$.
The transition kernel is defined as follows: for $x \in \cX, a \in \cA$, $P(x^\prime \mid x, a) = \lambda_x \mathbf 1_{a = \mv}$ if $x^\prime \neq x$ and 
$P(x^\prime \mid x, a) = 1 - \lambda_x \mathbf 1_{a = \mv}$ if $x^\prime = x$, where $\lambda_0, \lambda_1 \in (0,1)$ are fixed parameters.
The running reward function is defined as
\[
r(x,a,\mu) = r(x, \mu) = \mathbf 1_{x = 1} - \mu(1)^2 - \lambda W_1(\mu, B),
\]
where $W_1$ is the 1-Wasserstein distance, $\lambda > 0$ is a fixed scalar parameter and $B$ is a Bernoulli distribution with parameter $p$ such that $1- \lambda_0  \leq p \leq \lambda_1$.
The terminal reward function is $g(x, \mu) = r(x, \mu)$.
Although the original problem in \cite{gu2023dynamic} is an infinite-horizon discounted problem, we consider here a finite-horizon version of with time horizon $T=2$.
In this setting, there exists an optimal stationary policy given by
\begin{equation}
\label{eq:optimal-policy-example31}
\begin{aligned}
    \pi^\star(a \mid 0, \mu) &= \left(1-\frac{1-p}{\lambda_0}\right) \mathbf{1}_{\{a=\st\}}+\frac{1-p}{\lambda_0} \mathbf{1}_{\{a=\mv\}}, \\
   \pi^\star(a \mid 1, \mu) &= \left(1-\frac{p}{\lambda_1}\right) \mathbf{1}_{\{a=\mathrm{ST}\}}+\frac{p}{\lambda_1} \mathbf{1}_{\{a=\mathrm{MV}\}}.
\end{aligned}
\end{equation}
In our experiments, we take the following parameters:
$\lambda_0 = 0.5, \lambda_1 = 0.8, \lambda= 10, p = 0.6$.
This policy guarantees that, no matter the initial distribution $\mu_0$, the distribution at all time steps $t \geq 1$ is $\mu_t^\star = B$.
The training and evaluation are done as follows.
For each training episode we simulate the population starting from a random initial distribution $\mu\in \mathcal P (\cX)^*$ by drawing $\mu(L) \sim \mathcal U ([0.1,0.9])$. 
Every 10 training episodes, we freeze the policy and sample a validation episode for which we compute the population reward $V(\mu_0)$ starting from a fixed initial distribution $\mu_0 = (\mu_0(0), \mu_0(1)) = (0.2,0.8)$.
During training, we compute $N = 200$ trajectories for MF-REINFORCE, with $n = 10$ trajectories for the gradient of logits estimation. 
For this first example, the policy considered is a simple static policy that outputs a $| \cX| \times | \cA |$ matrix corresponding to the probability of each action given each state.
We make this particular choice of policy to check whether MF-REINFORCE is capable of recovering the optimal static policy given in \eqref{eq:optimal-policy-example31}. 
We use the Adam optimizer \cite{kingma2014adam} and train for 5,000 episodes using a learning rate of $10^{-3}$. 
The experiments are run for $\eps$ ranging in the set $\{0.2, 0.5, 1.0, 2.0\}$. 
The validation rewards for the different values of $\eps$ are shown in Figure \ref{fig:example31-validation}. One can see that a larger value of $\eps$ leads to faster increase in the value function at first, but yields worse final policies. 
In particular, we see that choosing $\eps$ too large can lead to catastophic failure.
To evaluate how well the learned policies match the optimal static policy $\pi^\star$ \eqref{eq:optimal-policy-example31}, we compute the average absolute errors in the resulting estimates of $\pi(\mathrm{ST} \mid 0)$ and $\pi(\mathrm{ST} \mid 1)$ over 5 independent training runs for each value of $\eps$ considered. The results are reported in Table \ref{tab:policy-error-example31}. These results match adequately with the curves of Figure \ref{fig:example31-validation}: the smaller $\eps$, the better and closer to optimal the learned policy is.

\begin{figure}[!ht]
    \centering
    \includegraphics[width=0.8\linewidth]{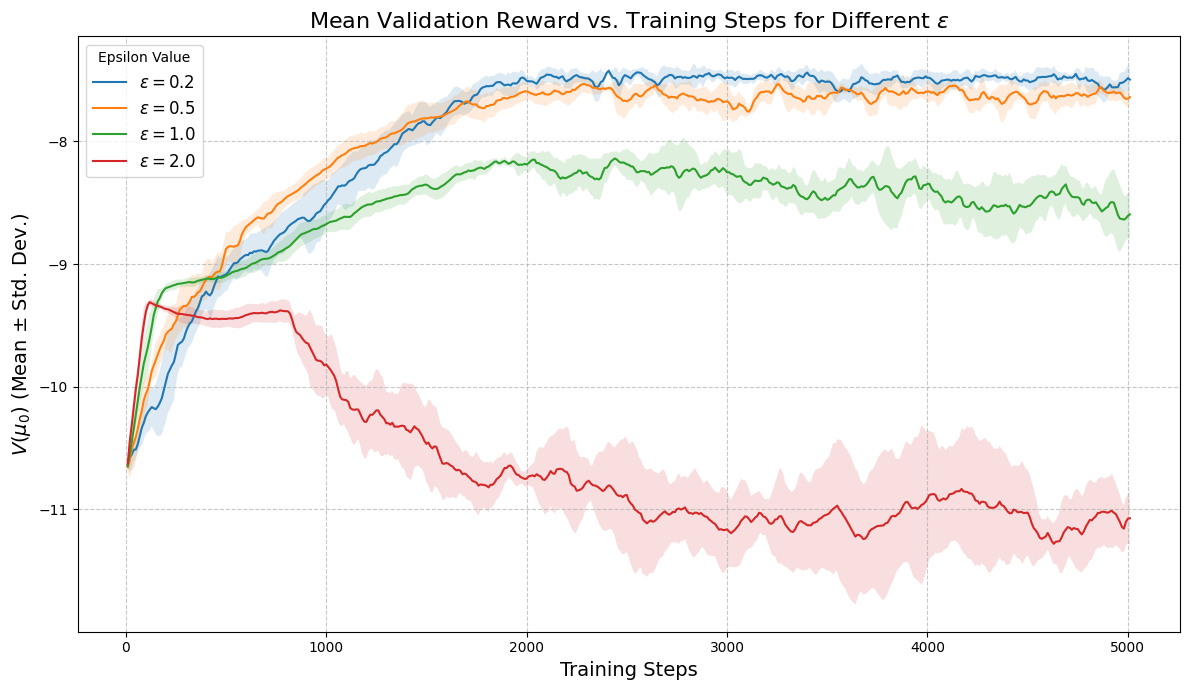}
    \caption{Evolution of validation rewards on two-state two-action problem with MF-REINFORCE for different values of $\eps$. The means and standard deviations are computed over 5 independent training runs.}
    \label{fig:example31-validation}
\end{figure}

\begin{table}[h!]
    \centering
    \caption{Average absolute error of learned policy for two-state two-action problem over 5 independent runs. 
    For reference, we have $\pi^\star( \mathrm{ST} \mid 0) = 0.2$, $\pi^\star( \mathrm{ST} \mid 1) = 0.25$.}
    \label{tab:policy-error-example31}
    \begin{tabular}{|l|c|c|c|c|}
        \hline
        & $\eps = 0.2$ & $\eps = 0.5$ & $\eps = 1.0$ & $\eps = 2.0$\\
        \hline
        Error for $\pi^\star(\mathrm{ST} \mid 0)$ & $0.0103$ & $0.0182$ & $0.0953$ & $0.3109$ \\
        \hline
        Error for $\pi^\star(\mathrm{ST} \mid 1)$ & $0.0051$ & $0.0104$ & $0.0176$ & $0.0538$ \\
        \hline
    \end{tabular}
\end{table}

\subsection{Cybersecurity Example}

We consider now an example inspired by the cybersecurity model first studied in \cite{kolokoltsov2016mean}. This problem has been adapted from a mean-field game to mean-field control (e.g.\ from the perspective of a company or state trying to decide on the best policy to protect a population of computers) in \cite{carmona2023model}, Section 6.1.
In this model, we consider a large population of computers, each of which can be in one of the following four states: defended and infected (DI), defended and susceptible (DS), undefended and infected (UI), undefended and susceptible (US). 
The action space is $\cA = \{0,1\}$, where $0$ means keeping the same level of protection (D or U) for the computer, and $1$ means updating the level of protection (from D to U or vice versa). 
When infected, the computer recovers at rate $q_{\mathrm{rec}}^D$ or $q_{\mathrm{rec}}^U$ depending on the level of protection.
When susceptible, the computer might be infected either directly by a hacker or by the other infected computers, at rates depending on the level of protection of the susceptible computer and the other infected computers.
Originally, this problem was formulated in continuous time, and the infinitesimal generator matrix can be written as follows, given the current distribution of the population $\mu$ and action chosen by the computer $a$,
\[
Q^{\mu, a} = 
\begin{pmatrix}
    \ldots & q_{\mathrm{rec}^D} & \lambda a & 0 \\
    Q_{DS \to DI}^{\mu, a} & \ldots & 0 & \lambda a \\
    \lambda a & 0 & \ldots & q_{\mathrm{rec}}^U \\
    0 & \lambda a & Q_{US \to UI}^{\mu, a} & \ldots
\end{pmatrix},
\]
where
\[
\begin{aligned}
Q_{DS \to DI}^{\mu, a} &= v_H q_{\mathrm{inf}}^D + \beta_{DD} \mu (DI) + \beta_{UD} \mu (UI), \\
Q_{US \to UI}^{\mu, a} &= v_H q_{\mathrm{inf}}^U + \beta_{UU} \mu (UI) + \beta_{DU} \mu (DI), \\
\end{aligned}
\]
and all the instances of $\ldots$ should be replaced by values to make each row sum to $0$.
Each computer incurs a cost whenever it is being defended or infected.
The individual cost per unit of time is given by $f(x) = k_D \mathbf 1_{x \in \{DI,DS\}} + k_I \mathbf 1_{ x \in \{DI, UI\}}$.
We adapt this model to a discrete-time setting by considering a time grid $0, \Delta t, 2 \Delta t, \ldots, N_{\mathrm{epi}} \Delta t$. Between two points of the time grid, the transition matrix is given by $P^{\mu, a}_{\Delta t} = \exp(\Delta t Q^{\mu, a})$, and the running reward is $r_{\Delta t}(x,a,\mu) = r_{\Delta t}(x) = - \Delta t f(x)$.
In our numerical experiments we choose the following values for the parameters:
\[
\begin{aligned}
\beta_{U U}&=0.3, \quad  \beta_{U D}=0.4, \quad \beta_{D U}=0.3, \quad \beta_{D D}=0.4, \\
q_{\mathrm{rec}}^D&=0.5, \quad  q_{\mathrm{rec}}^U=0.4, \quad q_{\mathrm{inf}}^D=0.4, \quad q_{\mathrm{inf}}^U=0.3, \\
v_H&=0.6, \quad \lambda=0.8, \quad k_D=0.3, \quad k_I=0.5, \quad \Delta t = 0.2.
\end{aligned}
\]
Note that these are the same parameter values as in the MFC version considered in \cite{carmona2023model}, Section 6.1, except for the time discretization step. Finally, the rewards are discounted by $\gamma = 0.5$ and the terminal reward is taken to be the same as the running reward. The training and evaluation are done as follows.
As the transition probability is stationary and the terminal reward is the same as the step reward, the length of a training episode is taken to be $T_{\mathrm{train}} = 3$ to mitigate the `curse of time' (see Remark \ref{remark:bias-logit-estimation}), and for each training episode we simulate the population starting from a random initial distribution $\mu\in \mathcal P (\cX)^*$. 
Every 10 training episodes, we freeze the policy and sample a validation episode of length $T_{\mathrm{val}} = 50$, for which we compute the population reward $V(\mu_0)$ starting from a fixed initial distribution $\mu_0 = (1/4,1/4,1/4,1/4)$.
During training, we compute $N = 200$ trajectories for MF-REINFORCE, with $n = 1$ trajectory for the gradient of logits estimation. 
The policy is given by a 2-layer MLP, with 32 hidden units and $\tanh$ activation; it takes as input $t, \mu$ and outputs a $\mathcal |\cX| \times |\cA|$ matrix corresponding to the probability of each action given each state.
We use the Adam optimizer and train for 20,000 episodes using a learning rate of $10^{-3}$. 
The experiments are run for $\eps$ ranging in the set $\{0.2, 0.5, 1.0, 2.0\}$. 
The validation rewards for the different values of $\eps$ are shown in Figure \ref{fig:cybersecurity-validation}.
We clearly see the effect of $\eps$ on the variance of the overall training algorithm: the smaller the perturbation, the more erratic the gradient estimation becomes, which leads to slower convergence.
Surprisingly, taking a smaller value of epsilon does not lead to a better policy. To confirm that the policy obtained is not suboptimal, we compare the state distribution evolution under the learned policy to the results obtained in \cite{carmona2023model} with mean-field $Q$-learning. 
The resulting flow of distributions is shown in Figure \ref{fig:cybersecurity-state}. Clearly, we see that the qualitative behaviour of the population under the policy learned using MF-REINFORCE eventually matches the one learned using mean-field $Q$-learning in \cite{carmona2023model}. 

\begin{figure}[!ht]
    \centering
    \includegraphics[width=0.8\linewidth]{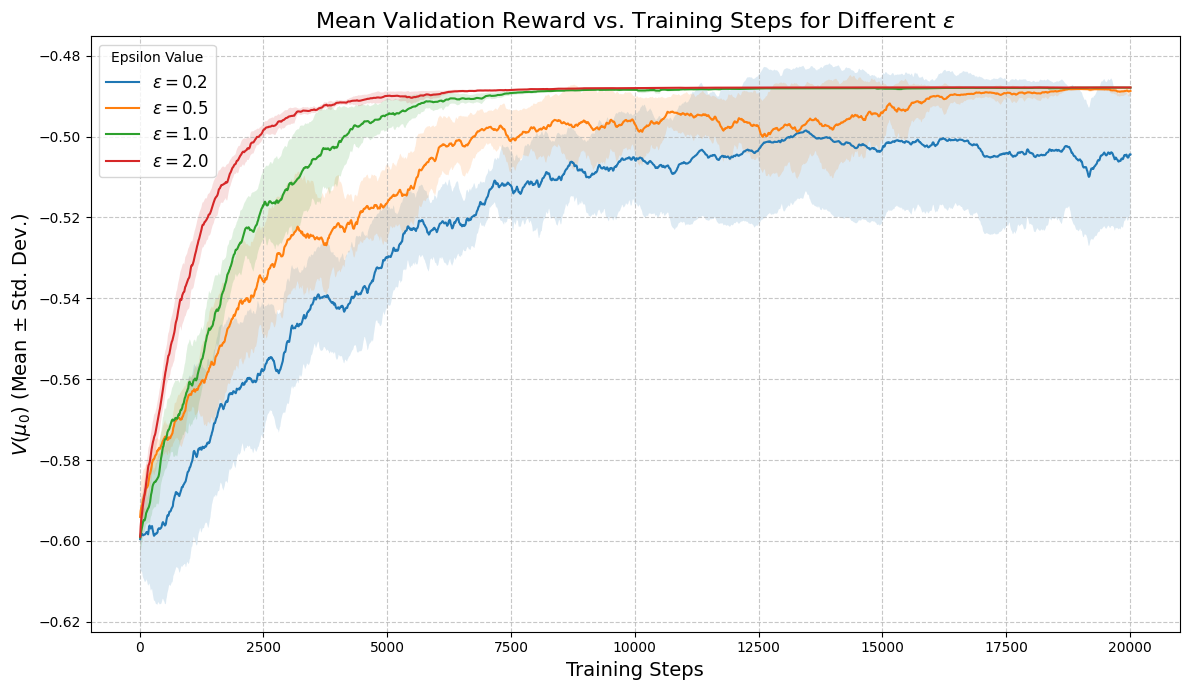}
    \caption{Evolution of validation rewards on cybersecurity problem with MF-REINFORCE for different values of $\eps$. The means and standard deviations are computed over 5 independent training runs.}
    \label{fig:cybersecurity-validation}
\end{figure}

\begin{figure}[!ht]
    \centering
    \includegraphics[width=0.8\linewidth]{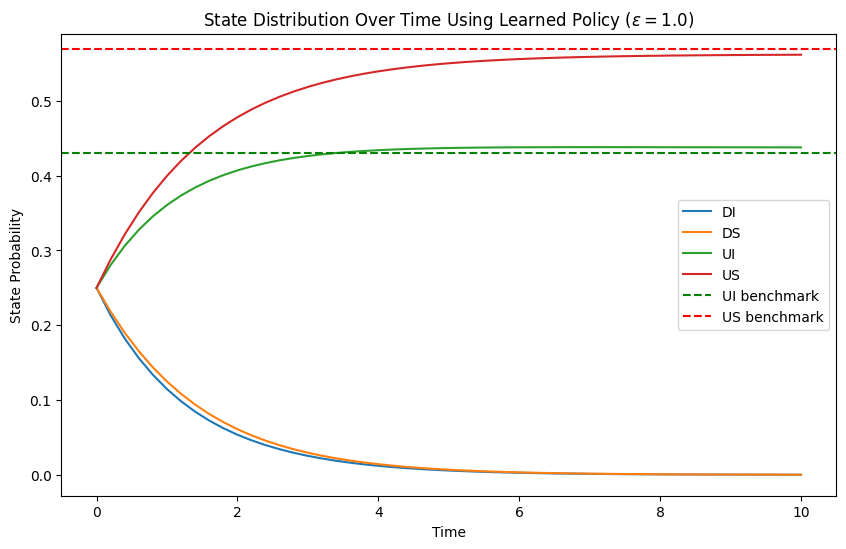}
    \caption{Flow of state distribution on cybersecurity problem using the learned policy trained with MF-REINFORCE ($\eps = 1.0$). The dashed lines correspond to the reported state probabilities at the final time step learned via mean-field $Q$-learning \citep{carmona2023model}.}
    \label{fig:cybersecurity-state}
\end{figure}

\subsection{Distribution Planning Example}

Finally, we consider a problem similar to the distribution planning example of \cite{carmona2023model}, Section 6.2.
The goal is to match a fixed target distribution $\mu_{\mathrm{target}} \in \mathcal P (\cX)$, where $\cX = \mathbb Z / 10 \mathbb Z$. The action space is $\cA = \{-1,0,+1\}$, and the transition probability is deterministic $P(\cdot \mid x, a, \mu) = \delta_{x + a}$. 
At each step, we penalize the deviation of the population from the target distribution and penalize when an agent chooses to move. Specifically, the step reward is $r(x,a,\mu) =- \lambda|a| - \sum_{x \in \cX} |\mu(x) - \mu_{\mathrm{target}}(x)|^2$, where $\lambda > 0$, and the terminal reward is $g(x,\mu) = - \sum_{x \in \cX} |\mu(x) - \mu_{\mathrm{target}}(x)|^2$.
In this example, we take the length of episodes to be $T = 5$ (for both training and validation) and $\lambda=0.01$.
For each training episode, we simulate the population starting from a random initial distribution $\mu\in \mathcal P (\cX)^*$.
Every 10 training episodes, we freeze the policy and sample a validation episode, for which we compute the population reward $V(\mu_0)$ starting from a fixed initial distribution $\mu_0 = \mathcal U(\cX)$.
During training, we compute $N = 500$ trajectories for MF-REINFORCE, with $n = 10$ trajectories for the gradient of logits estimation. 
The policy is given by a 2-layer MLP, with 256 hidden units and $\tanh$ activation.
We use the Adam optimizer and train for 100,000 episodes using a learning rate of $10^{-4}$. 
The experiments are run for $\eps$ ranging in the set $\{0.5, 0.75, 1.0, 2.0\}$. In this example, we see that taking a large value of $\eps$, i.e., $\eps \in \{1,2\}$, leads to more stable training and faster convergence, as shown in Figure \ref{fig:distplanning-validation}. However, the resulting trajectories still differ qualitatively between $\eps=1.0$ and $\eps=2.0$, as seen in Figure \ref{fig:distplanning-final-probs}.

\begin{figure}[!ht]
    \centering
    \includegraphics[width=0.8\linewidth]{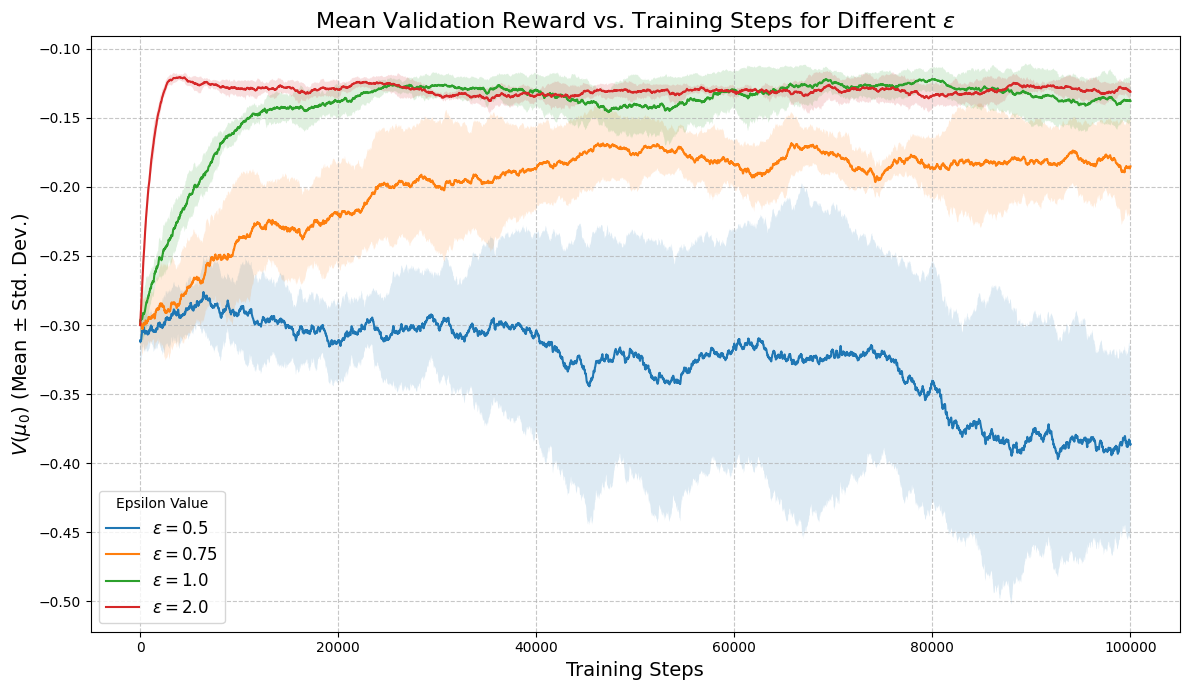}
    \caption{Evolution of validation rewards on distribution planning problem with MF-REINFORCE for different values of $\eps$. The means and standard deviations are computed over 5 independent training runs.}
    \label{fig:distplanning-validation}
\end{figure}

\begin{figure}[htbp]
     \centering
     \begin{subfigure}[b]{0.45\textwidth}
         \centering
         \includegraphics[width=\textwidth]{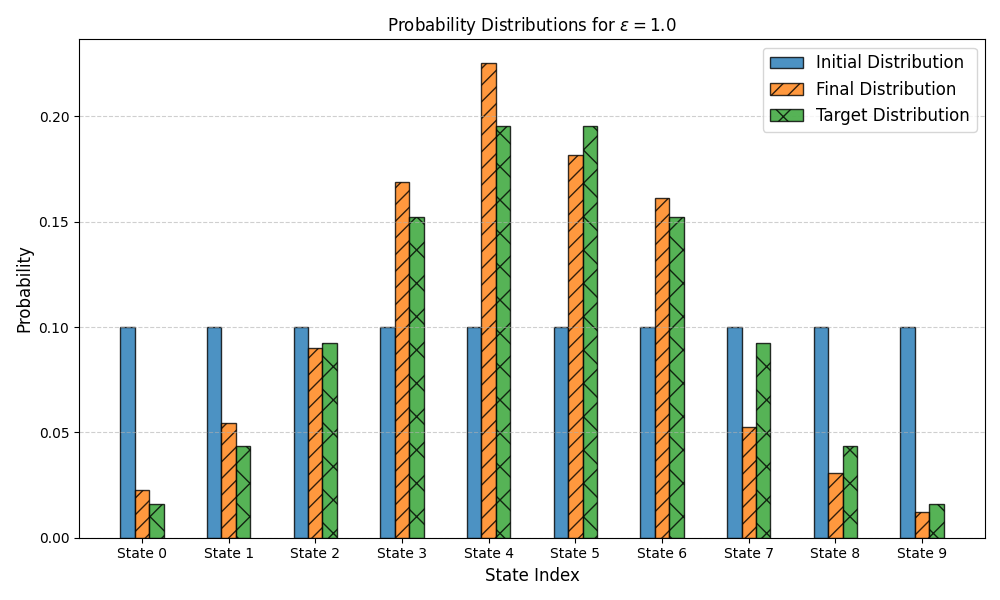}
         \caption{$\eps=1$}
         \label{fig:final-probs_1}
     \end{subfigure}
     \hfill 
     \begin{subfigure}[b]{0.45\textwidth}
         \centering
         \includegraphics[width=\textwidth]{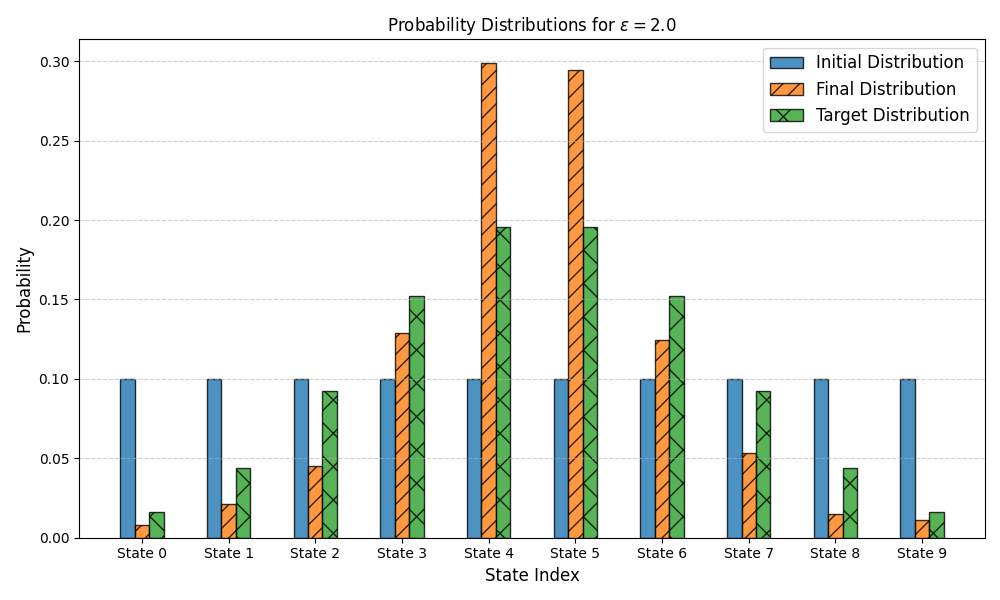}
         \caption{$\eps=2$}
         \label{fig:final-probs_2}
     \end{subfigure}
     
     \caption{Initial, target, and final distributions using MF-REINFORCE. The final distribution is obtained by generating a trajectory of length $T=5$ after training MF-REINFORCE with different values of $\eps$.}
     \label{fig:distplanning-final-probs}
\end{figure}

\section{Proofs of Main Results}
\label{section:proofs}

\subsection{Proofs of Results of Section \ref{section:pg-mfrl}}

\begin{proof}[Proof of Proposition \ref{proposition:exact-policy-gradient-formula}]
By the chain rule and Assumption \ref{assumption:finite-state-space}, one can differentiate under the integral sign to get
\begin{equation}
\label{eq:grad-vf-start}
\begin{aligned}
    \nabla_{\theta}V(\mu_0, \theta) &= \nabla_{\theta} \int \left(\sum_{t = 0}^{T - 1} r(x_t, a_t, \mu_t^\theta)  + g(x_T, \mu_T^\theta)\right) P_{\theta} (\d x_0, da_0, \ldots, \d a_{T - 1}, \d x_T) \\
    &= \int \left(\sum_{t = 0}^{T - 1} \nabla_{l} \tilde r(x_t, a_t, l_t^\theta) \nabla_\theta l_t^\theta  + \nabla_{l} \tilde g(x_T, l_T^\theta)\nabla_{\theta}l_T^\theta \right) P_{\theta} (\d x_0, da_0, \ldots, \d a_{T - 1}, \d x_T) \\
    &\quad + \int \left(\sum_{t = 0}^{T - 1} r(x_t, a_t, \mu_t^\theta)  + g(x_T, \mu_T^\theta)\right) \nabla_{\theta} P_{\theta} (\d x_0, da_0, \ldots, \d a_{T - 1}, \d x_T).
\end{aligned}
\end{equation}
For the second term of the last equation in \eqref{eq:grad-vf-start}, the likelihood ratio trick yields
\[
\begin{aligned}
    &\int \left(\sum_{t = 0}^{T - 1} r(x_t, a_t, \mu_t^\theta)  + g(x_T, \mu_T^\theta)\right) \mu_0(\d x_0) \\
    &\quad \cdot \nabla_{\theta} \left(\prod_{\tau = 0}^{T - 1} p (\theta, a_\tau, \tau,x_{\tau}, \mu_\tau^{\theta}) P(x_{\tau + 1} \mid a_{\tau}, x_{\tau},\mu_{\tau}^{\theta}) 
    \nu_{\cA}(\d a_{\tau}) \nu_{\cX} (\d x_{\tau + 1})  \right) \\
    &\qquad= \int  \left( \sum_{t^\prime = 0}^{T - 1} \nabla_{\theta} \log p(\theta, a_{t^\prime},t^\prime,x_{t^\prime},\mu_{t^\prime}^\theta) + \left(\nabla_{l} \log \tilde p(\theta, a_{t^\prime},t^\prime,x_{t^\prime},l_{t^\prime}^\theta) \tilde P(x_{t^\prime + 1} \mid x_{t^\prime},a_{t^\prime}, l_{t^\prime}^\theta) \right) \nabla_{\theta} l_{t^\prime}^\theta \right) \\ 
    &\qquad \quad  \cdot \left(\sum_{t = 0}^{T - 1} r(x_t, a_t, \mu_t^\theta)  + g(x_T, \mu_T^\theta)\right) P_{\theta}(\d x_0, \d a_0, \ldots, \d a_{T - 1}, \d x_T),
\end{aligned}
\]
which corresponds to \eqref{eq:exact-policy-gradient-formula}.
The term
\begin{equation}
    \label{eq:classical-reinforce-term}
    \mathrm{RF}(\theta) \coloneq  \int  \left( \sum_{t^\prime = 0}^{T - 1} \nabla_{\theta} \log p(\theta, a_{t^\prime},t^\prime,x_{t^\prime},\mu_{t^\prime}^\theta) \right)
    \left(\sum_{t = 0}^{T - 1} r(x_t, a_t, \mu_t^\theta)  + g(x_T, \mu_T^\theta)\right) \d P_{\theta},
\end{equation}
corresponds to the policy gradient formula in the classical REINFORCE algorithm applied to the MDP, assuming the flow of distributions $(\mu_t^\theta)_{t = 0}^T$ is fixed.
\end{proof}

\begin{proof}[Proof of Lemma \ref{lemma:tv-bound}]
    By Pinsker's inequality, we have
    \[
    d_\tv (\mu, \mu_\eps) \leq \sqrt{\frac{1}{2} \mathrm{KL}(\mu \| \mu_\eps)}.
    \]
    We now compute the Kullback-Leibler divergence
    \[
    \begin{aligned}
       \mathrm{KL}(\mu \| \mu_\eps) &= \sum_{i = 1}^d \mu(x^{(i)}) \log \left( \frac{\mu (x^{(i)})}{\mu_{\eps}(x^{(i)})} \right) \\
       &= \sum_{i = 1}^d \mu(x^{(i)}) \log \mu (x^{(i)}) - \sum_{i = 1}^d \mu(x^{(i)}) \log \mu_{\eps}(x^{(i)}) \\
         &= \sum_{i = 1}^d \mu(x^{(i)}) \log \mu (x^{(i)}) - 
         \sum_{i = 1}^d \mu(x^{(i)}) \left( \log \mu(x^{(i)}) + \eps \lambda_i - \log \left( \sum_{j = 1}^d e^{\log \mu(x^{(j)}) + \eps \lambda_j} \right) \right) \\ 
         &= - \eps \sum_{i = 1}^d \mu(x^{(i)}) \lambda_i + \log \left( \sum_{j = 1}^d \mu(x^{(j)}) e^{\eps \lambda_j} \right).
    \end{aligned}
    \]
    By Jensen's inequality, we have
    \[
    \begin{aligned}
    \mathbb E \left[ \sqrt{ \frac{1}{2}\mathrm{KL}(\mu \| \mu_\eps)} \right] &\leq \sqrt{ \frac{1}{2} \mathbb E \left[ \mathrm{KL}(\mu \| \mu_\eps) \right]} \\
    & = \sqrt{ \frac{1}{2} \mathbb E \left[ - \eps \sum_{i = 1}^d \mu(x^{(i)}) \lambda_i + \log \left( \sum_{j = 1}^d \mu(x^{(j)}) e^{\eps \lambda_j} \right) \right]} \\
    & = \sqrt{ \frac{1}{2} \mathbb E \left[ \log \left( \sum_{j = 1}^d \mu(x^{(j)}) e^{\eps \lambda_j} \right) \right] }.
    \end{aligned}
    \]
    Using Jensen's inequality again, we have
    \[
    \mathbb E \left[ \log \left( \sum_{j = 1}^d \mu(x^{(j)}) e^{\eps \lambda_j} \right) \right] 
    \leq \log \left( \sum_{j = 1}^d \mu(x^{(j)}) \mathbb E \left[ e^{\eps \lambda_j} \right] \right)
    = \log \left( \sum_{j = 1}^d \mu(x^{(j)}) e^{\frac{\eps^2}{2}} \right) = \frac{\eps^2}{2}.
    \]
    Hence $d_\tv (\mu, \mu_\eps) \leq \sqrt{\frac{\eps^2}{4}} = \frac{\eps}{2}$, which concludes the proof.
\end{proof}

\begin{proof}[Proof of Theorem \ref{theorem:perturbed-gradient}]
By performing the change of variables $\tilde{\lambda}_t = l^\pi_t + \eps \lambda_t$, the perturbed value function \eqref{eq:def-perturbed-value-function} can be written as 
\[
\begin{aligned}
    V_{\eps}^{\pi}(\mu_0) &= \int \left(\sum_{t = 0}^{T - 1} \tilde r(y_t, a_t, \tilde \lambda_t) + \tilde g(y_T, \tilde \lambda_T) \right) \prod_{\tau= 0}^{T }\frac{1}{(2 \pi)^{d/2}\eps^{d}}\exp \left( - \frac{ \| \tilde \lambda_\tau - l_\tau^\pi \|^2}{2 \eps^{2}} \right) \\
    & \quad \cdot \mu_0(\d y_0) \prod_{\tau = 0}^{T - 1} \mathrm{Leb}(\d \tilde \lambda_\tau) \tilde \pi (\d a_\tau \mid \tau,y_\tau, \tilde \lambda_\tau) \tilde P (\d y_{\tau + 1} \mid y_\tau, a_\tau, \tilde{\lambda}_\tau).
\end{aligned}
\]
Now, let's consider a policy $\pi_\theta$ for $\theta \in \R^D$. We write $V_\eps(\mu,\theta) = V_\eps^{\pi_\theta}(\mu)$, $\mu_t^\theta = \mu_t^{\pi_\theta}$, $l_t^\theta = l_t^{\pi_\theta}$, $\alpha_t^{\theta, \eps} = \alpha_t^{\pi_{\theta}, \Lambda, \eps}$ $Y_t^{\theta,\eps} =Y_t^{\pi_{\theta},\Lambda, \eps} $.
The differentiability of $V_\eps(\mu, \theta)$ follows from the differentiability of $\tilde p$ in $\theta$ and $l$ and the differentiability of $\tilde P$ in $l$.
Then, given the assumptions on $r,g,\pi_\theta$, one can differentiate under the integral sign to get
\[
\begin{aligned}
    \nabla_\theta V_{\eps}(\mu_0, \theta) &= 
    \int \left(\sum_{t = 0}^{T - 1} \tilde r(y_t, a_t, \tilde \lambda_t) + \tilde g(y_T, \tilde \lambda_t) \right)  \\
    &\quad  \cdot \sum_{t = 0}^{T } \eps^{-2} (l_t^\theta - \tilde \lambda_t) \nabla_{\theta} l_t^\theta
    \prod_{\tau = 0}^{T} \frac{1}{(2 \pi)^{(d-1)/2}\eps^{d-1}}\exp \left( - \frac{\| \tilde \lambda_\tau -  l_\tau^\theta \|^2}{2 \eps^{2}} \right) \\
    & \quad  \cdot  \mu_0(\d y_0) \prod_{\tau = 0}^{T - 1} \mathrm{Leb}(\d \tilde \lambda_\tau) \tilde \pi_{\theta} (\d a_\tau \mid \tau,y_\tau, \tilde \lambda_\tau) \tilde P (\d y_{\tau + 1} \mid y_\tau, a_\tau, \tilde{\lambda}_\tau) \\
    &\quad + \int \left(\sum_{t = 0}^{T - 1} \tilde r(y_t, a_t, \tilde \lambda_t) + \tilde g(y_T, \tilde \lambda_T) \right)\prod_{\tau = 0}^{T} \frac{1}{(2 \pi)^{d/2}\eps^{d}}\exp \left( - \frac{ \| \tilde \lambda_\tau - \mu_\tau^\theta \|^2}{2 \eps^{2}} \right) \\
    & \quad \cdot \nabla_{\theta} \left(\mu_0(\d y_0) \prod_{\tau = 0}^{T - 1} \mathrm{Leb}(\d \tilde \lambda_\tau) \tilde \pi_{\theta} (\d a_\tau \mid \tau,y_\tau, \tilde \lambda_\tau) \tilde P (\d y_{\tau + 1} \mid y_\tau, a_\tau, \tilde{\lambda}_\tau) \right).
\end{aligned}
\]
We have
\[
\begin{aligned}
&\nabla_{\theta} \left(\mu_0(\d y_0) \prod_{\tau = 0}^{T - 1} \mathrm{Leb}(\d \tilde \lambda_\tau) \tilde \pi_{\theta} (\d a_\tau \mid \tau,y_\tau, \tilde \lambda_\tau) \tilde P (\d y_{\tau + 1} \mid y_\tau, a_\tau, \tilde{\lambda}_\tau) \right) \\
&\qquad= \mu(\d y_0) \sum_{t = 0}^{T - 1} \nabla_{\theta} \log \tilde p (\theta, a_t, t,y_t, \tilde \lambda_t) \prod_{\tau =0}^{T - 1} \mathrm{Leb}(\d \tilde \lambda_\tau) \tilde \pi_{\theta} (\d a_\tau \mid \tau,y_\tau, \tilde \lambda_\tau) \tilde P (\d y_{\tau + 1} \mid y_\tau, a_\tau, \tilde{\lambda}_\tau).
\end{aligned}
\]
Hence

\begin{equation*}
\begin{aligned}
\nabla_{\theta} V_\eps(\mu_0, \theta) &= \mathbb E \Bigg[\left( \sum_{t = 0}^{T - 1} \tilde r(Y^{\theta,\eps}_t,  \alpha_t^{\theta,\eps},  l_t^\theta + \eps \Lambda_t) + \tilde g(Y_T^{\theta, \eps}, l_T^\theta+ \eps \Lambda_T) \right) \\
& \quad \cdot \left( \sum_{t = 0}^{T} \eps^{-1} \Lambda_t \left(\nabla_{\theta} l_t^\theta\right)\right)\Bigg] \\
&\quad + \mathbb E \Bigg[\left( \sum_{t = 0}^{T - 1} \tilde r(Y^{\theta,\eps}_t, \alpha_t^{\theta,\eps}, l_t^\theta + \eps \Lambda_t) + \tilde g(Y_T^{\theta, \eps},  l_T^\theta + \eps \Lambda_T) \right)  \\
& \quad \cdot \left( \sum_{t = 0}^{T - 1} \nabla_\theta \log \tilde p(\theta, \alpha_t^{\theta,\eps}, t, Y_t^{\theta,\eps}, l_t^\theta + \eps \Lambda_t \right)  \Bigg].
\end{aligned}
\end{equation*}
Finally, one can apply the causality trick to only consider the rewards from time $t$ to $T$ in the expectations, leading to \eqref{eq:policy-gradient-formula}.
\end{proof}

\begin{proof}[Proof of Theorem \ref{theorem:convergence-perturbed-gradient}]
Let's decompose the expression of the gradient of the perturbed value function \eqref{eq:policy-gradient-formula},
\begin{equation}
    \label{eq:policy-gradient-formula-decomposition}
    \nabla_{\theta} V_\eps(\mu_0, \theta) = \mathbb E \left[ \sum_{t = 0}^{T} \eps^{-1} \Lambda_t \nabla_{\theta} l_t^\theta G_t^\eps \right] + \mathbb E \left[ \sum_{t=0}^{T-1} \nabla_\theta \log \tilde p(\theta, \alpha_t^{\theta,\eps}, t,Y_t^{\theta,\eps}, l_t^\theta + \eps \Lambda_t) G_t^\eps \right].
\end{equation}

We first look at the second expectation in \eqref{eq:policy-gradient-formula-decomposition}. 
Clearly, since $Y_t^{\theta,\eps} \to X_t^\theta$ and $\alpha_t^{\theta, \eps} \to \alpha_t^\theta$ in distribution as $\eps \to 0$, we have, by expanding the expression of $G_t^\eps$, 
\[
\begin{aligned}
 &\mathbb E \Bigg[\left( \sum_{t = 0}^{T - 1} \tilde r(Y^{\theta,\eps}_t, \alpha_t^{\theta,\eps}, l_t^\theta + \eps \Lambda_t) + \tilde g(Y_T^{\theta, \eps},  l_T^\theta + \eps \Lambda_T) \right) \left( \sum_{t = 0}^{T - 1} \nabla_\theta \log \tilde p(\theta, \alpha_t^{\theta,\eps}, t,Y_t^{\theta,\eps}, l_t^\theta + \eps \Lambda_t \right)  \Bigg] \\
 &\qquad \to \mathbb E \Bigg[\left( \sum_{t = 0}^{T - 1} r(X^{\theta}_t, \alpha_t^{\theta}, \mathbb P_{X_t^\theta}) + g(X_T^{\theta, \eps},  \mathbb P_{X^{\theta}_T} ) \right) \left( \sum_{t = 0}^{T - 1} \nabla_\theta \log p(\theta, \alpha_t^{\theta}, t,X_t^{\theta}, \mathbb P_{X^{\theta}_t}) \right)  \Bigg] = \mathrm{RF}(\theta).
\end{aligned}
\]
We now look at the first expectation in \eqref{eq:policy-gradient-formula-decomposition} as $\eps \to 0$, which we rewrite using the following lemma.

\begin{lemma}
    \label{lemma:density-ratio-trick}
    Let $X,Y$ be random variables valued in a measurable space $E$, and let $\varphi: E \to \R$ be a bounded measurable function. Assume $X,Y$ have positive densities $p_X,p_Y$ with respect to a reference measure $q \in \mathcal P (E)$, and that $p_X$ is bounded away from $0$. Then $\mathbb E [\varphi(Y)] = \mathbb E \left[\varphi(X) \frac{p_Y(X)}{p_X(X)}\right]$.
\end{lemma}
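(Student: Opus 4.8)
The plan is to transport both sides of the claimed identity to integrals against the common reference measure $q$, where the equality reduces to a pointwise cancellation of $p_X$. The only ingredients are the defining property of a density (the transfer theorem, a.k.a.\ the law of the unconscious statistician) and a one-line integrability check, both supplied by the hypotheses that $\varphi$ is bounded and $p_X$ is bounded away from $0$.

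First I would rewrite the left-hand side. Since $Y$ has density $p_Y$ with respect to $q$, the transfer theorem gives $\mathbb E[\varphi(Y)] = \int_E \varphi(y)\, p_Y(y)\, \d q(y)$, which is finite because $|\varphi| \le \|\varphi\|_\infty$ and $\int_E p_Y\, \d q = 1$. Next I would treat the right-hand side: the map $x \mapsto \varphi(x)\, p_Y(x)/p_X(x)$ is a well-defined measurable function of $x$ (the ratio is finite wherever $p_X > 0$, which holds everywhere by hypothesis), so applying the transfer theorem to $X$, whose density is $p_X$, yields
\[
\mathbb E\!\left[\varphi(X)\frac{p_Y(X)}{p_X(X)}\right] = \int_E \varphi(x)\,\frac{p_Y(x)}{p_X(x)}\, p_X(x)\, \d q(x) = \int_E \varphi(x)\, p_Y(x)\, \d q(x),
\]
where in the last step the factor $p_X(x)$ cancels pointwise, using positivity of $p_X$. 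Comparing this with the expression obtained for the left-hand side closes the argument.

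The only point requiring any care — and the closest thing to an obstacle — is justifying that the right-hand expectation is well defined, i.e.\ that the integrand $\varphi(x)\, p_Y(x)/p_X(x)$ is integrable against $p_X\, \d q$ so that the transfer theorem applies. This follows from the estimate $\int_E |\varphi|\, (p_Y/p_X)\, p_X\, \d q = \int_E |\varphi|\, p_Y\, \d q \le \|\varphi\|_\infty < \infty$; the assumption that $p_X$ is bounded away from $0$ guarantees in addition that the ratio $p_Y/p_X$ is finite, so the random variable inside the expectation is almost surely finite. No hypotheses beyond boundedness of $\varphi$ and positivity (bounded away from $0$) of $p_X$ are needed, and everything else is a routine change-of-measure manipulation.
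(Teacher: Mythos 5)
Your proof is correct. The paper states this lemma without proof (it is invoked as a standard change-of-measure identity inside the proof of Theorem \ref{theorem:convergence-perturbed-gradient}), and your argument --- rewriting both expectations as integrals against the common reference measure $q$, cancelling $p_X$ pointwise using its positivity, and checking integrability via the boundedness of $\varphi$ and $\int_E p_Y\,\d q = 1$ --- is exactly the standard justification one would supply.
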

We apply Lemma \ref{lemma:density-ratio-trick} to $X = \left(X_0^\theta, \alpha_0^\theta, \ldots, \alpha_{T-1}^\theta, X_{T}^\theta\right)$ and $Y = \left(Y_0^{\theta, \eps}, \alpha_0^{\theta, \eps}, \ldots, \alpha_{T-1}^{\theta, \eps}, Y_T^{\theta, \eps} \right)$ to get
\[
\begin{aligned}
    M_{\eps}(\theta) & \coloneq \mathbb E \left[\left( \sum_{t = 0}^{T - 1} \tilde r(Y^{\theta,\eps}_t,  \alpha_t^{\theta,\eps},  l_t^\theta + \eps \Lambda_t) + \tilde g(Y_T^{\theta, \eps}, l_T^\theta + \eps \Lambda_T) \right) 
    \cdot \left( \sum_{t = 0}^{T} \eps^{-1} \Lambda_t \nabla_{\theta} l_t^\theta \right)\right] \\
    &= \mathbb E \left[\left( \sum_{t = 0}^{T - 1} \tilde r(X_t^\theta,  \alpha_t^{\theta},  l_t^\theta + \eps \Lambda_t) + \tilde g(X_T^{\theta}, l^{\theta}_T + \eps \Lambda_T) \right) 
    \cdot \left( \sum_{t = 0}^{T} \eps^{-1} \Lambda_t \nabla_{\theta} l_t^\theta \right) R_{T}^{\theta, \eps}\right],
\end{aligned}
\]
where the density ratio is
\begin{equation}
    \label{eq:likelihood-ratio-T}
R_{T}^{\theta, \eps} = 
\prod_{\tau = 0}^{T-1} 
\frac{\tilde p(\theta, \alpha_\tau^\theta,\tau,X_\tau^\theta, l_\tau^\theta + \eps \Lambda_{\tau})
\tilde P(X_{\tau + 1}^\theta \mid X_{\tau}^\theta,\alpha_{\tau}^\theta, l_\tau^\theta + \eps \Lambda_\tau)}
{p(\theta, \alpha_\tau^\theta, \tau,X_{\tau}^{\theta}, \mathbb P_{X_{\tau}^\theta}) P (X_{\tau + 1}^\theta \mid X_{\tau}^\theta, \alpha_\tau^\theta,\mathbb P_{X_\tau^{\theta}})}.
\end{equation}
Since $l \mapsto \tilde P (x^\prime \mid x, a, l) \tilde p(\theta, a , t,x, l)$ is twice differentiable with uniformly bounded second derivative,
using Taylor's formula with Lagrange remainder as $\eps \to 0$ we have, almost-surely
\[
\begin{aligned}
R_T^{\theta,\eps} &= \prod_{\tau = 0}^{T-1} \Big[\left(1 + \eps \nabla_{l} \log \tilde p(\theta, \alpha_\tau^\theta,\tau,X_\tau^{\theta}, l_\tau^\theta ) \cdot \Lambda_\tau + o(\eps)\right) \\
&\quad \cdot \left(1 + \eps \nabla_{l} \log \tilde P(X_{\tau + 1}^\theta \mid  X_\tau^{\theta},\alpha_\tau^\theta, l_\tau^\theta) \cdot \Lambda_\tau + o(\eps)\right) \Big] \\
&= 1 + \eps \sum_{\tau = 0}^{T - 1} \left(\nabla_{l} \log \tilde P(X_{\tau + 1}^\theta \mid  X_\tau^{\theta},\alpha_\tau^\theta, l_\tau^\theta) + \nabla_{l} \log \tilde p(\theta, \alpha_\tau^\theta,\tau,X_\tau^{\theta}, l_\tau^\theta) \right) \cdot \Lambda_\tau    + o (\eps),
\end{aligned}
\]
where $o(\eps)$ corresponds to the Lagrange remainder uniformly bounded in $\eps$ and converging to $0$ almost surely as $\eps \to 0$.
This means that as $\eps \to 0$, by the dominated convergence theorem, $M_\eps(\theta)$ can be written as
\[
\begin{aligned}
M_{\eps}(\theta) &= \mathrm{MD}_{\eps}(\theta) + \mathrm{MFD}_{\eps}(\theta) + o(1), \quad \text{where} \\
\mathrm{MD}_{\eps}(\theta) & \coloneq \mathbb E \left[\left( \sum_{t = 0}^{T - 1} \tilde r(X_t^\theta,  \alpha_t^{\theta},  l_t^\theta + \eps \Lambda_t) + 
\tilde g(X_T^{\theta}, l^{\theta}_T + \eps \Lambda_T) \right) \cdot \left( \sum_{s = 0}^{T} \eps^{-1} \Lambda_{s} \nabla_{\theta} l_s^{\theta}  \right)\right],\\
\mathrm{MFD}_{\eps}(\theta) &\coloneq \mathbb E \Bigg[\left( \sum_{t = 0}^{T - 1} \tilde r(X_t^\theta,  \alpha_t^{\theta},  l_t^\theta + \eps \Lambda_t) + 
\tilde g(X_T^{\theta}, l^{\theta}_T + \eps \Lambda_T) \right) \cdot \left( \sum_{s = 0}^{T} \Lambda_s \nabla_{\theta} l_s^\theta   \right) \\
& \quad \cdot \left( \sum_{\tau = 0}^{T - 1} \left(\nabla_{l} \log \tilde P(X_{\tau + 1}^\theta \mid  X_\tau^{\theta},\alpha_\tau^\theta, l_\tau^\theta) + \nabla_l \log \tilde p(\theta, \alpha_\tau^\theta,\tau,X_\tau^{\theta}, l_\tau^\theta) \right) \cdot \Lambda_{\tau} \right) \Bigg].
\end{aligned}
\]
The off-diagonal terms in $\mathrm{MD}_{\eps}(\theta)$ cancel by independence of $(\Lambda_t)_{t = 0}^{T}$, hence
\[
    \mathrm{MD}_{\eps}(\theta) = 
    \eps^{-1}\mathbb E \left[ \sum_{t = 0}^{T - 1} r(X_t^\theta, \alpha_t^\theta,l_t^\theta + \eps \Lambda_t) \Lambda_t \nabla_{\theta}l_t^\theta  + g(X_{T}^\theta, l_t^\theta + \eps \Lambda_T) \Lambda_T \nabla_{\theta}l_t^\theta  \right].
\]
We now examine the term $t$ in the above sum.
By independence of $\Lambda_s$ and $(X_s^\theta, \alpha_s^\theta)$ we have
\[
\mathbb E [r(X_t^\theta, \alpha_t^\theta,\mathbb P_{X_t}^\theta) \Lambda_t] = \mathbb E[r(X_t^\theta, \alpha_t^\theta,\mathbb P_{X_t}^\theta)] \mathbb E[\Lambda_t]=0.
\]
Hence,
\[
\begin{aligned}
    &\mathbb E \left[\eps^{-1} \tilde r(X_t^\theta, \alpha_t^\theta,l_t^\theta + \eps \Lambda_t) \Lambda_t \nabla_{\theta}l_t^\theta  \right] \\
    &\qquad= \mathbb E \left[\eps^{-1} (\tilde r(X_t^\theta, \alpha_t^\theta,l_t^\theta + \eps \Lambda_t) - \tilde r(X_t^\theta, \alpha_t^\theta, l_t^\theta)) \Lambda_t \nabla_{\theta}l_t^\theta   \right] \\
    &\qquad= \mathbb E\left[ \nabla_l \tilde r(X_t^\theta, \alpha_t^\theta, l_t^\theta) \cdot \Lambda_t \left(\Lambda_t \nabla_{\theta}l_t^\theta \right)\right] + o(1) \\
    &\qquad= \mathbb E[\nabla_l \tilde r(X_t^\theta, \alpha_t^\theta, l_t^\theta)] \mathbb E [\Lambda_t^T \Lambda_t] \nabla_{\theta} l_t^\theta + o (1) \\
    &\qquad= \mathbb E[\nabla_l \tilde r(X_t^\theta, \alpha_t^\theta, l_t^\theta)] \nabla_{\theta} l_t^\theta + o(1).
\end{aligned}
\]
The same argument can be applied to the other terms in $\mathrm{MD}_{\eps}(\theta)$, which proves that $\mathrm{MD}_{\eps}(\theta) \to \mathrm{MD}(\theta)$ as $\eps \to 0$.

We now examine $\mathrm{MFD}_{\eps}(\theta)$, let
\[
 \tilde G^{\eps,\theta}_0 = \sum_{t = 0}^{T - 1} \tilde r(X_t^\theta,  \alpha_t^{\theta},  l_t^\theta + \eps \Lambda_t) + \tilde g(X_T^{\theta}, l^{\theta}_T + \eps \Lambda_T).
\]
Note that $\mathbb E[\tilde G^{\eps, \theta} (\theta) \Lambda_s^T \Lambda_\tau ] = 0$ whenever $s \neq \tau$ and that $\tilde G^{\eps, \theta}(\theta)$ is a random variable converging almost surely to $G_0^\theta$, hence
\[
\begin{aligned}
    \mathrm{MFD}_{\eps}(\theta) &= \mathbb E\left[\tilde G^{\eps, \theta}_0 \sum_{s = 0}^{T - 1} \left( \nabla_l \log \tilde P(X_{\tau + 1}^\theta \mid  X_\tau^{\theta},\alpha_\tau^\theta, l_\tau^\theta) + \nabla_l \log \tilde p(\theta, \alpha_\tau^\theta,\tau,X_\tau^{\theta}, l_\tau^\theta)\right) \cdot \Lambda_s \left(\Lambda_s \nabla_\theta l_t^\theta  \right)  \right] \\
    &= \mathbb E\left[G_0^\theta \sum_{s = 0}^{T - 1} \left( \nabla_l \log \tilde P(X_{\tau + 1}^\theta \mid  X_\tau^{\theta},\alpha_\tau^\theta,  l_\tau^\theta) \tilde p(\theta, \alpha_\tau^\theta,\tau,X_\tau^{\theta}, l_\tau^\theta)\right) \cdot \Lambda_s \left( \Lambda_s \nabla_\theta l_t^\theta \right) \right] + o (1) \\
    &=\mathbb E\left[G_0^\theta\sum_{s = 0}^{T - 1} \left( \nabla_l \log \tilde P(X_{\tau + 1}^\theta \mid  X_\tau^{\theta},\alpha_\tau^\theta, l_\tau^\theta ) \tilde p(\theta, \alpha_\tau^\theta,\tau,X_\tau^{\theta}, l_\tau^\theta)\right) \nabla_\theta l_t^\theta \right] + o (1),
\end{aligned}
\]
where we have used independence of $\Lambda_s$ and $(X_s^\theta, \alpha_s^\theta)$, and $\mathbb E[ \Lambda_s^T \Lambda_s ] = I_d$. 
In particular, this means that $\mathrm{MFD}_\eps(\theta) \to \mathrm{MFD(\theta)}$ as $\eps \to 0$.
Hence we have shown that
\[
\nabla_\theta V_\eps(\mu, \theta) \rightarrow \mathrm{RF}(\theta) + \mathrm{MD}(\theta) + \mathrm{MFD}(\theta) = \nabla _\theta V(\mu, \theta) \quad \text{as } \eps \to 0.
\]
\end{proof}

\subsection{Proofs of Results of Section \ref{section:pg-algorithm}}

The following discrete Gr\"onwall lemma will be useful. To be the best of our knowledge, it was first proven as Corollary 2.3 in \cite{hutzenthaler2022multilevel}.
\begin{lemma}
    \label{lemma:discrete-gronwall}
    Let $(x_n)_{n \geq 0}$ be a sequence of non-negative real numbers such that $x_0=0$ and satisfying, for some constants $c_1, c_2, c_3, c_4, c_5, c_6 \geq 0$ such that $(c_1,c_2) \neq (0,0)$ for all $n \geq 1$,
    \[
    x_n \leq  \sum_{k=0}^{n-1} \left(c_1 x_{k} +  \mathbf 1_{k \geq 1} c_2 x_{k-1} \right) + n c_3 + c_4 \sum_{k=0}^{n-1}c_5^k + c_6. 
    \]
Then, for all $n \geq 1$,
    \[
    x_n \leq \begin{cases}
        \frac{3}{2}c_6  \beta^n +\frac{3 c_3(\beta^n - 1)}{2(\beta - 1)} + \frac{3}{2} c_4 n  \beta^n & \text{if } c_5 = \beta, \\
        \frac{3}{2}c_6  \beta^n +\frac{3 c_3(\beta^n - 1)}{2(\beta - 1)} + \frac{3 c_4 (c_5^{n+1} - c_5 \beta^n)}{2 (c_5 - \beta)} & \text{otherwise},
    \end{cases}
    \]
where $\beta = \frac{1}{2}\left(1 + c_1 + \sqrt{(1 + c_1)^2 + 4c_2} \right) > 1$.   
\end{lemma}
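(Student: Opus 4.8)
The plan is to turn the summed inequality into an exact linear recurrence whose solution can be written in closed form, and then bound that solution. First I would pass to an equality majorant: define $(y_n)$ by $y_0=0$ and, for $n\ge 1$, $y_n = \sum_{k=0}^{n-1}\left(c_1 y_k + \mathbf 1_{k\ge 1}c_2 y_{k-1}\right) + nc_3 + c_4\sum_{k=0}^{n-1}c_5^k + c_6$. Because every constant is nonnegative, the right-hand side is nondecreasing in each argument $y_k$, so a straightforward strong induction (base case $x_0=y_0=0$) gives $x_n\le y_n$ for all $n$. It therefore suffices to bound $y_n$.

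Next I would take first differences to eliminate the sums. For $n\ge 1$ one computes $y_{n+1}-y_n = c_1 y_n + c_2 y_{n-1} + c_3 + c_4 c_5^{\,n}$, that is, $y_{n+1}=(1+c_1)y_n + c_2 y_{n-1} + c_3 + c_4 c_5^{\,n}$, with $y_0=0$ and $y_1=c_3+c_4+c_6$. This is a second-order linear recurrence with constant coefficients; its homogeneous characteristic equation $\lambda^2-(1+c_1)\lambda - c_2=0$ has exactly the two roots $\beta$ (the quantity in \eqref{eq:definition-beta}, which one checks satisfies $\beta>1$ using $(c_1,c_2)\neq(0,0)$) and $\beta'=-c_2/\beta\le 0$. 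Writing $z_k=(\beta^k-(\beta')^k)/(\beta-\beta')$ for the fundamental solution (so $z_0=0$, $z_1=1$), variation of parameters gives $y_n = y_1 z_n + \sum_{m=1}^{n-1} z_{n-m}\left(c_3 + c_4 c_5^{\,m}\right)$, which splits the bound into the homogeneous contribution of the initial datum $y_1$ plus two discrete convolutions of $z$ against the forcings $c_3$ and $c_4 c_5^{\,n}$.

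The heart of the argument is the pointwise estimate $z_k\le \tfrac{3}{2}\beta^{\,k-1}$. Since the product of roots gives $\beta\beta'=-c_2$, and $\beta^2=(1+c_1)\beta+c_2\ge c_2$, one has $|\beta'|=c_2/\beta\le\beta$. The estimate then follows from a short parity computation: for even $k$, $\beta^k-(\beta')^k\le\beta^k$, whereas for odd $k$, $\beta^k-(\beta')^k=\beta^k+|\beta'|^k\le\beta^k+|\beta'|\,\beta^{\,k-1}$, and in both cases one divides by $\beta-\beta'\ge\beta$ to conclude. This is where the universal factor $\tfrac{3}{2}$ originates.

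Finally I would substitute $z_k\le\tfrac{3}{2}\beta^{\,k-1}$ into the variation-of-parameters formula and sum the resulting geometric series termwise: the $c_6$-part of $y_1$ produces the $\tfrac{3}{2}c_6\beta^n$ contribution, the constant forcing $c_3$ produces $\tfrac{3c_3}{2}\sum_{k=1}^{n}\beta^{\,k-1}=\tfrac{3c_3(\beta^n-1)}{2(\beta-1)}$, and the geometric forcing $c_4 c_5^{\,n}$ produces a geometric convolution whose closed form splits according to whether $c_5=\beta$ (resonance, giving the $\tfrac{3}{2}c_4 n\beta^n$ term) or $c_5\neq\beta$ (giving the $\tfrac{3c_4(c_5^{\,n+1}-c_5\beta^n)}{2(c_5-\beta)}$ term). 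I expect the main obstacle to be exactly this last stage: establishing the fundamental-solution bound cleanly and then keeping careful track of the convolution indices and of how the pieces of $y_1$ are distributed, so that the geometric sums collapse into precisely the stated closed forms and the resonant case $c_5=\beta$ is handled correctly. Once the estimate on $z_k$ is in hand, the remaining work is routine summation of geometric series.
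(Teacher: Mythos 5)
The paper does not actually prove this lemma; it only quotes it as Corollary 2.3 of \cite{hutzenthaler2022multilevel}, so your self-contained argument is necessarily a different route from anything in the paper, and it is essentially sound. The majorization $x_n\le y_n$, the differencing to the recurrence $y_{n+1}=(1+c_1)y_n+c_2y_{n-1}+c_3+c_4c_5^{\,n}$ with $y_1=c_3+c_4+c_6$, and the variation-of-parameters representation $y_n=y_1z_n+\sum_{m=1}^{n-1}z_{n-m}(c_3+c_4c_5^{\,m})$ all check out; the characteristic roots $\beta$ and $\beta'=-c_2/\beta\le 0$ are always distinct because $(1+c_1)^2+4c_2\ge 1$, so no confluent case arises. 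Your parity argument in fact yields the cleaner bound $z_k\le\beta^{\,k-1}$ (the factor $\tfrac32$ is not needed at that stage), and the $c_6$- and $c_3$-contributions, as well as the resonant $c_4$-term, then collapse to expressions dominated by the stated $\tfrac32 c_6\beta^n$, $\tfrac{3c_3(\beta^n-1)}{2(\beta-1)}$ and $\tfrac32 c_4 n\beta^n$.

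The one place where ``routine summation'' does not land on the printed formula is the non-resonant $c_4$-term: your convolution gives $\tfrac32 c_4\sum_{m=0}^{n-1}c_5^{\,m}\beta^{\,n-1-m}=\tfrac{3c_4(\beta^n-c_5^{\,n})}{2(\beta-c_5)}$, whereas the lemma asserts $\tfrac{3c_4(c_5^{\,n+1}-c_5\beta^n)}{2(c_5-\beta)}$, which is exactly your expression multiplied by $c_5$. For $c_5\ge 1$ your bound therefore implies the stated one, but for $c_5<1$ it does not --- and the statement as printed actually fails there (take $c_1=1$, $c_2=c_3=c_6=0$, $c_4=1$, $c_5=0$: then $x_1$ may equal $1$ while the claimed bound evaluates to $0$). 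This is a defect of the lemma as transcribed rather than of your derivation: in every application in the paper either $c_4=0$ or $c_5\ge\beta>1$, in which case the two closed forms agree up to a factor that only strengthens your conclusion. So your proof is correct where the lemma is correct, and where it deviates from the stated closed form it exposes an implicit restriction ($c_5\ge 1$) that the statement omits; you should state that restriction explicitly rather than claim the sums collapse to precisely the printed expressions.
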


\begin{proof}[Proof of Lemma \ref{lemma:bound-gradient-logits}]
    Let $t \in \{0, \ldots, T\}$ and $i \in \{1, \ldots, d\}$.
    We set the following notation, for all $x^\prime,x \in \cX, a \in \cA, l \in \R^d, \theta \in \Theta, s \in \{0, \ldots, T-1\}$,
    \[
    \varphi^{(i)}(\theta, x^\prime, a, t,x, l) \coloneq \mathbf 1_{x^\prime = x^{(i)}} \nabla_\theta \log \tilde p (\theta, a, t,x, l), \quad  \psi(\theta, x^\prime, t,x, a, l) \coloneq \tilde P (x^\prime \mid x, a, l) \tilde p (\theta, a, t,x, l). 
    \]
    Let $\nabla_\theta \mu_t^{(i)} = \nabla_\theta \P(X_t^\theta = x^{(i)})$. By Proposition \ref{proposition:exact-policy-gradient-formula} with reward function $x \mapsto 1_{x = x^{(i)}}$ at time $t$ and $0$ elsewhere, 
    \[
    \begin{aligned}
    \nabla_\theta \mu_t^{\theta, i} &= \mathbb E \left[
        \mathbf 1_{X_t^\theta = x^{(i)}} \sum_{s = 0}^{t-1} \nabla_\theta \log \tilde p (\theta, \alpha_s^\theta, s,X_s^\theta, l_s^\theta) \right] \\
      & \quad  + \mathbb E \left[\mathbf 1_{X_t^\theta = x^{(i)}} \sum_{s=0}^{t-1} \left(\nabla_l \log \tilde p (\theta, \alpha_s^\theta, s,X_s^\theta, l_s^\theta) + \nabla_l \log \tilde P (X_{s + 1}^\theta \mid X_s^\theta, \alpha_s^\theta, l_s^\theta)\right) \nabla l_s^\theta  \right]\\
     &= \mathbb E \left[ \sum_{s=0}^{t-1} \varphi^{(i)}(\theta, X_t^\theta, \alpha_s^\theta, s,X_s^\theta, l_s^\theta) + \mathbf 1_{X_t^\theta = x^{(i)}}\psi(\theta, X_{s+1}^\theta, s,X_s^\theta, \alpha_s^\theta, l_s^\theta) \nabla l_s^\theta \right].
    \end{aligned}
    \]
    Recall that under Assumption \ref{assumption:finite-state-space}, $ \|\nabla_\theta \log \tilde p \|_\infty \leq C $. The first term can simply be bounded by 
    \[
    \left\| \mathbb E \left[ \mathbf 1_{X_t^\theta = x^{(i)}} \sum_{s=0}^{t-1} \nabla_\theta \log \tilde p (\theta, \alpha_s^\theta, s,X_s^\theta, l_s^\theta) \right]  \right\|_\infty \leq t C.
    \]
    The second term can be bounded by
    \[
\begin{aligned}
    &\left\| \mathbb E \left[ \mathbf 1_{X_t^\theta = x^{(i)}} \sum_{s=0}^{t-1}\nabla_l \log \psi(\theta, X_{s + 1}^\theta, s,X_s^\theta, \alpha_s^\theta, l_s^\theta) \nabla_\theta l_s^\theta \right] \right\|_\infty \\
    &\qquad = \Bigg\| \int \mathbf 1_{x_t = x^{(i)}} \sum_{s=0}^{t-1}\frac{\nabla_l \psi(\theta, x_{s + 1}, s,x_s, a_s, l_s^\theta)}{\psi(\theta, x_{s+1}, s,x_s, a_s, l_s^\theta)} \nabla_\theta l_s^\theta \\
    &\qquad \quad \cdot \left(\prod_{\tau = 0}^{t - 1} \psi(\theta, x_{\tau+1}, \tau,x_\tau, a_\tau, l_\tau^\theta) \nu_\cX (\d x_{\tau + 1}) \nu_\cA (\d a_\tau) \right) \mu_0(\d x_0) \Bigg\|_\infty\\
    &\qquad  \leq \int \mathbf 1_{x_t = x^{(i)}} \sum_{s=0}^{t-1} B_1 \left\| \nabla_\theta l_s^\theta \right\| \\ 
    & \qquad \quad \cdot \prod_{\substack{\tau = 0 \\ \tau \neq s}}^{t - 1} \psi(\theta, x_{\tau+1}, \tau,x_\tau, a_\tau, l_\tau^\theta) \nu_\cX (\d x_{\tau + 1}) \nu_\cA (\d a_\tau)  
    \nu_\cX (\d x_{s + 1}) \nu_\cA (\d a_s) \mu_0(\d x_0) \\
    &\qquad \leq  \left\| \nabla_l \psi \right\|_\infty | \cA | \sum_{s=0}^{t-1} d^{\mathbf 1_{s < t - 1}} \left\| \nabla l_s^\theta \right\|.
\end{aligned}
\]
Hence we have the recursive bound, for $t \geq 1$,
\[
\begin{aligned}
    &\left\| \nabla_\theta l_t^\theta \right\| \leq  \sum_{s = 0}^{t-1} \left(c_1 \left\| \nabla_\theta l_s^\theta \right\| + \mathbf 1_{s \geq 1} c_2 \left\| \nabla_\theta l_{s-1}^\theta   \right\| \right)   + t c_3, \\
    \text{with } &c_1 = \mathbf s^\theta B_1 | \cA | , \quad c_2 = \mathbf s^\theta B_1| \cA | (d - 1), \quad c_3 = \mathbf s^\theta C, \\
    & \mathbf s^\theta \coloneq \sup_{0 \leq t \leq T} \sum_{i=1}^d \mathbb P (X_t^\theta = x^{(i)})^{-1}.
\end{aligned}
\]
If $ B_1  > 0$, we can apply Lemma \ref{lemma:discrete-gronwall} to get
\[
\begin{aligned}
&\left\| \nabla_\theta l_t^\theta \right\|_\infty \leq \frac{3 C(\beta^t - 1)}{2(\beta - 1)}, \\
\text{with } &\beta = \frac{1}{2} \left( 1 + \mathbf s^\theta B_1 | \cA | + \sqrt{(1 + \mathbf s^\theta B_1| \cA |)^2 + \mathbf s^\theta B_1 | \cA | (d - 1)} \right).
\end{aligned}
\]
If $ B_1 = 0$, then $\left\| \nabla_\theta l_t^\theta \right\| \leq t C$.
\end{proof}

\subsubsection{Proofs on Estimation Biases}

\begin{proof}[Proof of Proposition \ref{proposition:bias-state-distribution-gradient}]
    Let $\theta \in \Theta$ and fix it for the rest of the proof.
    For the whole proof, we denote by $X_t^{\theta}, Y_t^{\theta, \eps}, \alpha_t^{\theta}, \alpha_t^{\theta, \eps}$ processes distributed according to \eqref{eq:perturbed-process} using perturbations $\Lambda_t, t = 0, \ldots, T$,
    independent of the processes and perturbations used in the estimators $\widehat{\nabla_\theta l_t^\theta}$.
    The bound is trivial for $t = 0$.
    We first assume $B_1 > 0$.
    Let $t \geq 1$.
    For $i = 1, \ldots, d$, let $\mu^{(i)}_t = \mathbb P(X_t^\theta = x^{(i)})$.
    We set the following notations for the rest of the proof:
    \begin{equation}
        \label{eq:bias-notation}
    E_t(\theta, \eps) \coloneq \left\| \mathbb E \left[ \widehat{\nabla_{\theta} l_t^\theta} \right] - \nabla_\theta l_t^\theta \right\|, \: b_t^{(i)}(\theta) \coloneq  \mathbb E \left[ \widehat{\nabla_{\theta} \mu_t^{(i)}} \right] - \nabla_\theta \mu^{(i)}_t.
    \end{equation}
    Recall from the proof of Lemma \ref{lemma:bound-gradient-logits} that, by Proposition \ref{proposition:exact-policy-gradient-formula}, we have
    \[
    \begin{aligned}
    \nabla_\theta \mu^{(i)}_t &= \mathbb E \Bigg[
        \mathbf 1_{X_t^\theta = x^{(i)}} \sum_{s = 0}^{t-1} \Big(
            \nabla_\theta \log \tilde p (\theta, \alpha_s^\theta, s,X_s^\theta, l_s^\theta) \\
            &\quad + \left(\nabla_l \log \tilde p(\theta, \alpha_s^\theta, s,X_s^\theta, l_s^\theta)
            + \nabla_l \log \tilde P(X_{s + 1}^\theta \mid X_s^\theta, \alpha_s^\theta, l_s^\theta) \right) \nabla_\theta l_s^\theta
            \Big)
            \Bigg].
    \end{aligned}
\]
On the other hand, by taking expectation in \eqref{eq:logit-gradient-estimator} we have
\[
    \mathbb E \left[ \widehat{\nabla_{\theta} \mu_t^{(i)}} \right] = \mathbb E \left[ 
         \mathbf 1_{Y_t^{\theta, \eps}=x^{(i)}} 
         \sum_{s = 0}^{t-1} \left(
            \nabla_\theta \log \tilde p (\theta, \alpha_s^{\theta, \eps}, s,Y_s^{\theta, \eps}, l_s^{\theta} + \eps \Lambda_s) 
            + \eps^{-1} \left(\widehat{\nabla l_s^\theta}\right)^T \Lambda_s 
            \right)
         \right].
\]
We split $b_t^{(i)}$ into two terms:
\begin{equation}
\label{eq:bt-decomposition}
\begin{aligned}
    b_t^{(i)}(\theta) &= b_{t,1}^{(i)}(\theta) + b_{t,2}^{(i)}(\theta),\\
    b_{t,1}^{(i)}(\theta) & \coloneq \mathbb E \left[ 
         \mathbf 1_{Y_t^{\theta, \eps} = x^{(i)}} \sum_{s = 0}^{t-1}
            \nabla_\theta \log \tilde p (\theta, \alpha_s^{\theta, \eps}, s,Y_s^{\theta, \eps}, l_s^{\theta} + \eps \Lambda_s) \right] \\
            & \quad \:\:- \mathbb E \left[ \mathbf 1_{X_t^{\theta} = x^{(i)}} \sum_{s = 0}^{t-1} 
            \nabla_\theta \log \tilde p (\theta, \alpha_s^{\theta}, s,X_s^{\theta}, l_s^{\theta}) 
    \right], \\
    b_{t,2}^{(i)}(\theta) & \coloneq \mathbb E \left[ 
        \mathbf 1_{Y_t^{\theta, \eps} = x^{(i)}} \sum_{s = 0}^{t-1} 
            \eps^{-1} \Lambda_s \widehat{\nabla_\theta l_s^\theta}  \right] \\
       & \quad \:\: - \mathbb E \left[ \mathbf 1_{X_t^{\theta} = x^{(i)}} \sum_{s = 0}^{t-1} \left(
            \nabla_l \log \tilde p(\theta, \alpha_s^\theta, s,X_s^\theta, l_s^\theta) + \nabla_l \log \tilde P(X_{s + 1}^\theta \mid X_s^\theta, \alpha_s^\theta, l_s^\theta)
            \right) \nabla_\theta l_s^\theta
    \right].
\end{aligned}
\end{equation}
We first examine $b_{t,1}^{(i)}(\theta)$,
\[
\begin{aligned}
    b_{t,1}^{(i)}(\theta) & = \Delta_1^{(i)}(\theta) + \Delta_2^{(i)}(\theta), \\
     \Delta_1^{(i)}(\theta) & \coloneq \mathbb E \left[
        \mathbf 1_{Y_t^{\theta, \eps} = x^{(i)}} \sum_{s = 0}^{t-1}  \left(
        \nabla_\theta \log \tilde p (\theta, \alpha_s^{\theta, \eps},s, Y_s^{\theta, \eps}, l_s^{\theta} + \eps \Lambda_s)
        - \nabla_\theta \log \tilde p (\theta, \alpha_s^{\theta, \eps},s, Y_s^{\theta, \eps}, l_s^{\theta}) \right) \right], \\
        \Delta_2^{(i)}(\theta) & \coloneq \mathbb E \left[
        \mathbf 1_{Y_t^{\theta, \eps} = x^{(i)}} \sum_{s = 0}^{t-1}
            \nabla_\theta \log \tilde p (\theta, \alpha_s^{\theta, \eps}, s,Y_s^{\theta, \eps}, l_s^{\theta}) 
            - \mathbf 1_{X_t^{\theta} = x^{(i)}} \sum_{s = 0}^{t-1}
            \nabla_\theta \log \tilde p (\theta, \alpha_s^{\theta}, s,X_s^{\theta}, l_s^{\theta}).
        \right]
\end{aligned}
\]
Since $\nabla_\theta \log p$ is Lipschitz in the measure argument, we have 
\[
\left\| \Delta_1^{(i)}(\theta) \right\|_\infty \leq  \sum_{s=0}^{t-1} L^\prime_p \mathbb E \left[d_\tv \left(\mu_t^\theta, \mu_t^{\theta, \eps}\right)\right] \leq t L^\prime_p \frac{\eps}{2},
\]
where we have used Lemma \ref{lemma:tv-bound}.
For $\Delta_2^{(i)}$, we define
\begin{equation}
    \label{eq:likelihood-ratio-t}
\begin{aligned}
    R_{t}^{\theta, \eps} &\coloneq \prod_{s = 0}^{t-1} \frac{\tilde P (X^\theta_{s+1} \mid X^\theta_s, \alpha_s^\theta, l_s^\theta + \eps \Lambda_s)\tilde p (\theta, \alpha_s^\theta, s,X_s^\theta,l_s^\theta + \eps \Lambda_s)}{\tilde P (X^\theta_{s+1} \mid X^\theta_s, \alpha_s^\theta, l_s^\theta )\tilde p (\theta, \alpha_s^\theta,s, X_s^\theta,l_s^\theta )} \\
    &= \prod_{s=0}^{t-1} \frac{\psi(\theta, X^\theta_{s+1}, s,X^\theta_s, \alpha_s^\theta, l_s^\theta + \eps \Lambda_s)}{\psi(\theta, X^\theta_{s+1}, s,X^\theta_s, \alpha_s^\theta, l_s^\theta)}.    
\end{aligned}
\end{equation}
By Assumption \ref{assumption:grad-log-policy} and performing a Taylor expansion with Lagrange remainder of $\psi$ in its last argument,
there exist random variables $\xi^\theta_{s,j,k}, s = 0, \ldots, t-1, j,k = 1, \ldots, d$ bounded by $B_2$ such that $\xi^\theta_{s,j,k}$ is $\sigma(X^\theta_{s+1}, X^\theta_s, \alpha_s^\theta, \Lambda_s)$-measurable and
\begin{equation}
    \label{eq:likelihood-ratio-expansion}
\begin{aligned}
R_{t}^{\theta, \eps} &= \prod_{s=0}^{t-1} \left(1 + \eps \frac{\nabla_l \psi(\theta, X^\theta_{s + 1}, s,X_s^\theta, \alpha_s^\theta, l_s^\theta)}{\psi(\theta, X^\theta_{s + 1}, s,X_s^\theta, \alpha_s^\theta, l_s^\theta)} \cdot \Lambda_s 
+ \frac{\eps^2}{2} \frac{\sum_{j,k=1}^d \xi^\theta_{s,j,k} \Lambda_s^{(j)} \Lambda_s^{(k)}}{\psi(\theta, X_{s + 1}^\theta, s,X_s^\theta, \alpha_s^\theta, l_s^\theta)}\right) \\
&= 1 + \sum_{s=0}^{t-1} \eps \nabla_l \log \psi(\theta, X^\theta_{s + 1},s, X_s^\theta, \alpha_s^\theta, l_s^\theta) \cdot \Lambda_s + \zeta_t^{\theta,\eps},
\end{aligned}
\end{equation}
where $\zeta_t^{\theta,\eps}$ is a remainder term given by
\begin{equation}
    \label{eq:remainder-term-likelihood-ratio}
\begin{aligned}
\zeta_t^{\theta, \eps} &= \frac{\eps^2}{2}\sum_{s=0}^{t-1} \frac{\sum_{j,k=1}^d \xi^\theta_{s,j,k} \Lambda_s^{(j)} \Lambda_s^{(k)}}{\psi(\theta, X_{s + 1}^\theta, s,X_s^\theta, \alpha_s^\theta, l_s^\theta)}&\\
& \quad + \sum_{k = 2}^{t-1} \sum_{0 \leq s_1 < s_2 < \ldots < s_k \leq t-1} \prod_{r = 1}^k \Bigg( \eps \nabla_l \log \psi(\theta, X^\theta_{s_r + 1}, s_r,X_{s_r}^\theta, \alpha_{s_r}^\theta, l_{s_r}^\theta) \cdot \Lambda_{s_r} \\
& \quad+ \frac{\eps^2}{2} \frac{\sum_{j,k=1}^d \xi^\theta_{s_r,j,k} \Lambda_{s_r}^{(j)} \Lambda_{s_r}^{(k)}}{\psi(\theta, X_{s_r + 1}^\theta,s_r,  X_{s_r}^\theta, \alpha_{s_r}^\theta, l_{s_r}^\theta)} \Bigg).
\end{aligned}
\end{equation}

In the following lemma, we prove that $\zeta_t^{\theta, \eps}$ is of order $\eps^2$ in expectation.
\begin{lemma}
\label{lemma:zeta-remainder}
     There exist constants $A_{1,\eps},A_{2,\eps} > 0$ defined by \eqref{eq:def-A1} and \eqref{eq:def-A2} respectively, depending only on $ \eps,B_1,B_2, C, d, | \cA |, T$, bounded as $\eps \to 0$, such that, for all $i \in \{1, \ldots, d\}$, $0 \leq s < t \leq T$, 
\[
\left\| \mathbb E \left[ \mathbf 1_{X_t^\theta = x^{(i)}} \zeta_t^{\theta, \eps} \Lambda_s  \right] \right\|_{\infty} \leq A_{1,\eps} \eps^2, \quad \text{and} \quad \left\| \mathbb E \left[ \mathbf 1_{X_t^\theta = x^{(i)}} \zeta_t^{\theta, \eps} \nabla_\theta \log \tilde p (\theta, \alpha_s^{\theta}, s,X_s^\theta, l_s^\theta) \right] \right\|_{\infty} \leq A_{2,\eps} \eps^2.
\]
\end{lemma}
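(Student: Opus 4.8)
The plan is to write each of the two expectations as an explicit integral against the joint law of the (unperturbed) trajectory $(X^\theta_\tau,\alpha^\theta_\tau)_{\tau}$ and the independent Gaussians $(\Lambda_\tau)_\tau$, and to exploit the fact that the denominators $\psi(\theta,X^\theta_{s+1},s,X^\theta_s,\alpha^\theta_s,l_s^\theta)$ appearing in $\zeta_t^{\theta,\eps}$ cancel exactly against the transition weights of the path measure. This cancellation is the whole point: the first-order coefficient $\nabla_l\log\psi=\nabla_l\psi/\psi$ is not bounded on its own, since there is no a priori lower bound on $\psi$, so a naive pointwise estimate of $\zeta_t^{\theta,\eps}$ as $O(\eps^2)$ is unavailable. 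Only after pairing each factor with a weight of the path measure do the bounds $|\nabla_l\psi|\le B_1$ and $|\xi^\theta_{s,j,k}|\le B_2$ from Assumption \ref{assumption:grad-log-policy} become usable.

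Concretely, I would re-expand $R_t^{\theta,\eps}=\prod_{s=0}^{t-1}(1+a_s+b_s)$ with $a_s=\eps\,\nabla_l\log\psi(\cdots)\cdot\Lambda_s$ and $b_s=\tfrac{\eps^2}{2}\,\psi(\cdots)^{-1}\sum_{j,k}\xi^\theta_{s,j,k}\Lambda_s^{(j)}\Lambda_s^{(k)}$, so that $\zeta_t^{\theta,\eps}=\sum_{s}b_s+\sum_{|S|\ge 2}\prod_{s'\in S}(a_{s'}+b_{s'})$, the outer sum running over subsets $S\subseteq\{0,\dots,t-1\}$. For a fixed $S$, multiplying $\prod_{s'\in S}(a_{s'}+b_{s'})$ by the path weight $\prod_{\tau=0}^{t-1}\psi(\cdots,\tau)$ cancels the factor $\psi(\cdots,s')^{-1}$ inside every $a_{s'},b_{s'}$, leaving $\eps\,\nabla_l\psi(\cdots,s')\cdot\Lambda_{s'}$ or $\tfrac{\eps^2}{2}\sum_{j,k}\xi^\theta_{s',j,k}\Lambda_{s'}^{(j)}\Lambda_{s'}^{(k)}$ at the indices $s'\in S$, and the untouched weights $\psi(\cdots,\tau)$ for $\tau\notin S$. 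Each surviving $S$-factor is then bounded in modulus by $\eps B_1\sum_j|\Lambda_{s'}^{(j)}|+\tfrac{\eps^2}{2}B_2\big(\sum_j|\Lambda_{s'}^{(j)}|\big)^2$, which is of order $\eps$ per index, hence of order $\eps^{|S|}$ over the product; since $|S|\ge 1$ for the $b_s$ terms and $|S|\ge 2$ otherwise, and $\eps\in(0,1)$, every surviving contribution is at least of order $\eps^2$.

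Next I would integrate out the state and action variables. For $\tau\notin S$, integrating $a_\tau$ against $\nu_\cA$ turns $\psi(\cdots,\tau)$ into a genuine Markov kernel in $(x_\tau,x_{\tau+1})$ (using $\int_\cA\tilde p\,\d\nu_\cA=1$ and $\sum_{x'}\tilde P=1$), which sums to one and contributes a bounded factor; for $\tau\in S$ the missing weight leaves a free integration producing at most a factor $d\,|\cA|$. Bounding the remaining state integral crudely by $(d\,|\cA|)^{|S|}$, replacing the external factor $\Lambda_s$ (resp. $\nabla_\theta\log\tilde p(\theta,\alpha_s^\theta,s,X_s^\theta,l_s^\theta)$, which is bounded by $C$ by Assumption \ref{assumption:finite-state-space}) by its modulus, and taking the Gaussian expectation, all the moments $\mathbb E\big[\prod_{s'\in S}(\sum_j|\Lambda_{s'}^{(j)}|)^{m_{s'}}\,|\Lambda_s|\big]$ are finite and independent of $\eps$. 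Summing over the at most $2^T$ subsets $S$ and collecting the factors $B_1,B_2,C,d,|\cA|$ and the combinatorial counts $\binom{t}{k}\le\binom{T}{k}$ yields bounds of the form $A_{1,\eps}\eps^2$ and $A_{2,\eps}\eps^2$, where $A_{1,\eps},A_{2,\eps}$ are finite polynomials in $\eps$ with coefficients depending only on $B_1,B_2,C,d,|\cA|,T$; since $\eps\in(0,1)$ these stay bounded as $\eps\to 0$, giving the constants of \eqref{eq:def-A1} and \eqref{eq:def-A2}.

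The main obstacle is precisely the first step: $\zeta_t^{\theta,\eps}$ is genuinely not $O(\eps^2)$ pointwise, so the argument must be carried out at the level of the integral, matching each unbounded $\psi^{-1}$ with a transition weight \emph{before} any estimate is applied. The bookkeeping of which $\psi$ cancels and which survives, together with the combinatorial sum over $S$ and the resulting power counting in $\eps$, is where the care lies; the Gaussian-moment and Markov-normalisation steps afterwards are routine.
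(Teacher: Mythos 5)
Your proposal is correct and follows essentially the same route as the paper: expand $R_t^{\theta,\eps}=\prod_s(1+a_s+b_s)$ so that $\zeta_t^{\theta,\eps}$ collects the diagonal second-order terms and all products over index sets of size at least two, cancel each $\psi^{-1}$ against the corresponding transition weight of the path measure (which is indeed the step the paper leaves implicit but relies on to obtain the $d\,|\cA|$ factors), bound the surviving numerators by $B_1,B_2,C$, and control the Gaussian moments and the binomial sums over index sets. The only minor difference is that you bound all cross terms by moduli, whereas the paper additionally kills the terms containing an isolated first-order factor $a_{s_r}$ with $s_r\neq s$ by the zero-mean/independence of the $\Lambda_\tau$'s; this only sharpens the constants and does not affect the $O(\eps^2)$ rate.
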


\begin{proof}[Proof of Lemma \ref{lemma:zeta-remainder}]
    We examine the first bound. We have
    \[
        \mathbb E \left[ \mathbf 1_{X_t^\theta = x^{(i)}} \zeta_t^{\theta, \eps} \Lambda_s \right]
        = \frac{\eps^2}{2} \mathbb E \left[\mathbf 1_{X_t^\theta = x^{(i)}} \Lambda_s  \sum_{\tau=0}^{t-1} U_\tau^\theta \right]
        + \sum_{k = 2}^{t-1} \sum_{0 \leq s_1 < s_2 < \ldots < s_k \leq t-1} \mathbb E \left[\mathbf 1_{X_t^\theta = x^{(i)}} \Lambda_s\prod_{r = 1}^k V^\theta_{s_r}\right],
    \]
    where 
    \[ 
    \begin{aligned}
    U^\theta_\tau &= \frac{\sum_{j,k=1}^d \xi^\theta_{\tau,j,k} \Lambda_\tau^{(j)} \Lambda_\tau^{(k)}}{\psi(\theta, X_{\tau + 1}^\theta, \tau, X_\tau^\theta, \alpha_\tau^\theta, l_\tau^\theta)}, \\
    V^\theta_{\tau} &=  \eps \nabla_l \log \psi(\theta, X^\theta_{\tau + 1}, \tau,X_{\tau}^\theta, \alpha_{\tau}^\theta, l_{\tau}^\theta) \cdot \Lambda_{\tau} 
 + \frac{\eps^2}{2}U^\theta_{\tau}.
    \end{aligned}
    \]
    For the first term, we have by independence and zero-mean of $(\Lambda_\tau)_\tau$ that 
    \[
    \left\|\mathbb E \left[\mathbf 1_{X_t^\theta = x^{(i)}} \Lambda_s \sum_{\tau = 0}^{t-1}U_\tau^\theta \right] \right\|_\infty 
    = \left\| \mathbb E \left[\mathbf 1_{X_t^\theta = x^{(i)}} \Lambda_s U_s^\theta \right] \right\|_\infty 
    \leq B_2 d^3 | \cA | \mathbb E[|Z|^3],
\]
    where $Z$ is a standard normal random variable.
    We now examine the second term.
    Fix $k \in \{2, \ldots,t-1\}$ and let $s$ and $0 \leq s_1 < \ldots < s_k < t$ such that $s \in \{s_1, \ldots, s_k\}$. Then, we have
    \[
    \begin{aligned}
    &\mathbb E \left[ \mathbf 1_{X_t^\theta = x^{(i)}} \eps \Lambda_s\prod_{r = 1}^k V^\theta_{s_r}\right] \\
    & \qquad =\mathbb E \left[ \mathbf 1_{X_t^\theta = x^{(i)}} \Lambda_s \left( \eps \nabla_l \log \psi(\theta, X^\theta_{s + 1}, s,X_{s}^\theta, \alpha_{s}^\theta, l_{s}^\theta) \cdot \Lambda_{s} \prod_{r = 1, s_r \neq s}^{k} \frac{\eps^2}{2} U_{s_r}^\theta
    +\prod_{r=1}^{k} \frac{\eps^2}{2} U^\theta_{s_r} \right) \right],
    \end{aligned}
    \]
    since all the other terms cancel by independence and zero-mean of $(\Lambda_\tau)_\tau$.
    We have 
    \[
    \begin{aligned}
    &\left\|\mathbb E \left[ 
        \mathbf 1_{X_t^\theta = x^{(i)}} \Lambda_s \eps \nabla_l \log \psi(\theta, X^\theta_{s + 1}, s,X_{s}^\theta, \alpha_{s}^\theta, l_{s}^\theta) \cdot \Lambda_{s} 
        \prod_{r=1, s_r \neq s}^{k} \frac{\eps^2}{2} U^\theta_{s_r} \right] \right\|_\infty \\
        & \qquad \leq \frac{\eps^{2k - 1}}{2^{k-1}}B_1 B_2^{k-1} d^{2(k-1)} d^{k} | \cA |^k \mathbb E [|Z|^{2k}],
    \end{aligned}
    \] 
    and 
    \[
    \left\|\mathbb E \left[ \mathbf 1_{X_t^\theta = x^{(i)}} \Lambda_s \prod_{r=1}^{k} \frac{\eps^2}{2} U^\theta_{s_r} \right] \right\|_\infty 
        \leq \frac{\eps^{2k}}{2^{k}} B_2^{k} d^{3k} | \cA |^k \mathbb E [|Z|^{2k+1}].
    \]
    Denote by $\sigma_{k} \coloneq \mathbb E |Z|^k$. Summing over $k$ and all possible choices of $s_1, \ldots, s_k$ including $s$, we get the following bound
    \[
    \label{eq:bound-A1}
    \begin{aligned}
    \mathbb E \left[ \mathbf 1_{X_t^\theta = x^{(i)}} \zeta_t^{\theta, \eps} \Lambda_s \right] &\leq \frac{\eps^2}{2} B_2 d^3 | \cA | \sigma_3 \\
    & \quad + \sum_{k=2}^{t-1} \binom{t-1}{k-1} \left( \frac{\eps^{2k - 1}}{2^{k-1}}B_1 B_2^{k-1} d^{2(k-1)} d^{k} | \cA |^k \sigma_{2k} + \frac{\eps^{2k}}{2^{k}} B_2^{k} d^{3k} | \cA |^k \sigma_{2k + 1} \right),
    \end{aligned}
    \]
    which proves the first bound of the lemma for a constant $A_{1,\eps}$ depending only on $\eps, B_1,B_2, d, | \cA |, T$ defined by
    \begin{equation}
        \label{eq:def-A1}
         A_{1,\eps} \coloneq \frac{B_2 d^3 | \cA | \sigma_3}{2} + \sum_{k=2}^{t-1} \binom{t-1}{k-1} \left( \frac{\eps^{2k - 3}}{2^{k-1}}B_1 B_2^{k-1} d^{2(k-1)} d^{k} | \cA |^k \sigma_{2k} + \frac{\eps^{2k-2}}{2^{k}} B_2^{k} d^{3k} | \cA |^k \sigma_{2k + 1} \right).
    \end{equation}
    We now examine the second bound. We have
    \[
    \begin{aligned}
        &\mathbb E \left[ \mathbf 1_{X_t^\theta = x^{(i)}} \zeta_t^{\theta, \eps} \nabla_\theta \log \tilde p (\theta, \alpha_s^{\theta}, s,X_s^\theta, l_s^\theta) \right] \\ 
        &\qquad = \frac{\eps^2}{2} \mathbb E \left[\mathbf 1_{X_t^\theta = x^{(i)}} \nabla_\theta \log \tilde p (\theta, \alpha_s^{\theta}, s,X_s^\theta, l_s^\theta)  \sum_{\tau=0}^{t-1} U_\tau^\theta \right] \\
         &\qquad \quad + \sum_{k=2}^{t-1} \sum_{0 \leq s_1 < \ldots < s_k < t} \mathbb E \left[ \mathbf 1_{X_t^\theta = x^{(i)}} \nabla_\theta \log \tilde p (\theta, \alpha_s^{\theta},s, X_s^\theta, l_s^\theta) \prod_{r=1}^{k} V_{s_r}^\theta \right].
    \end{aligned}
    \]
    For the first term, we have
    \[
    \left\|\mathbb E \left[\mathbf 1_{X_t^\theta = x^{(i)}} \nabla_\theta \log \tilde p (\theta, \alpha_s^{\theta},s, X_s^\theta, l_s^\theta)  \sum_{\tau=0}^{t-1} U_\tau^\theta \right] \right\|_\infty 
    \leq t C B_2 d^3 | \cA | \sigma_2.
    \]
    For the second term, fix $k \in \{2, \ldots, t-1\}$ and let $s$ and $0 \leq s_1 < \ldots < s_k < t$. Then, we have
   \[
   \begin{aligned}
    \mathbb E \left[ \mathbf 1_{X_t^\theta = x^{(i)}} \nabla_\theta \log \tilde p (\theta, \alpha_s^{\theta}, s,X_s^\theta, l_s^\theta) \prod_{r=1}^{k} V_{s_r}^\theta \right]
    = \mathbb E \left[ \mathbf 1_{X_t^\theta = x^{(i)}} \nabla_\theta \log \tilde p (\theta, \alpha_s^{\theta},s, X_s^\theta, l_s^\theta) \prod_{r=1}^{k} \frac{\eps^2}{2} U_{s_r}^\theta \right].
   \end{aligned}
   \] 
   Hence 
   \[
   \begin{aligned}
    \left\| \mathbb E \left[ \mathbf 1_{X_t^\theta = x^{(i)}} \nabla_\theta \log \tilde p (\theta, \alpha_s^{\theta},s, X_s^\theta, l_s^\theta) \prod_{r=1}^{k} V_{s_r}^\theta \right] \right\|_\infty 
    \leq \frac{\eps^{2k}}{2^k} C B_2^{k} d^{3k} | \cA |^k \sigma_{2k}.
   \end{aligned}
   \]
    Summing over $k$ and all possible choices of $s_1, \ldots, s_k$, we get the following bound
    \[
    \left\| \mathbb E \left[
    \mathbf 1_{X_t^\theta = x^{(i)}} \zeta_t^{\theta, \eps} \nabla_\theta \log \tilde p (\theta, \alpha_s^{\theta}, s,X_s^\theta, l_s^\theta)
    \right] \right\|_\infty 
    \leq  \sum_{k=1}^{t-1} \binom{t}{k} \frac{\eps^{2k}}{2^k} C B_2^{k} d^{3k} | \cA |^k \sigma_{2k},
    \]
    which proves the second bound of the lemma for a constant $A_{2,\eps}$ depending only on $\eps$, $B_2$, $C$, $d$, $| \cA |$,$T$ defined by
    \begin{equation}
        \label{eq:def-A2}
        A_{2, \eps} \coloneq \sum_{k=1}^{t-1} \binom{t}{k} \frac{\eps^{2k - 2}}{2^k} C B_2^{k} d^{3k} | \cA |^k \sigma_{2k},
    \end{equation}
    concluding the proof of Lemma \ref{lemma:zeta-remainder}.
\end{proof}
Notice that
\[
\begin{aligned}
\Delta_2^{(i)}(\theta) &= \mathbb E \left[ (1 - R^{\theta, \eps}_t)\mathbf 1_{X_t^\theta = x^{(i)}} \sum_{s=0}^{t-1} \nabla_\theta \log \tilde p (\theta, \alpha_s^{\theta}, s,X_s^\theta, l_s^\theta)  \right] \\
&= \mathbb E \left[ \zeta_t^{\theta, \eps} \mathbf 1_{X_t^\theta = x^{(i)}} \sum_{s=0}^{t-1} \nabla_\theta \log \tilde p (\theta, \alpha_s^{\theta},s, X_s^\theta, l_s^\theta) \right],
\end{aligned}
\]
since the term in $\eps$ in the expression of $R_t^{\theta, \eps}$ \eqref{eq:likelihood-ratio-expansion} vanishes in expectation by independence and zero-mean of $(\Lambda_s)_s$.
Hence $\left\| \Delta_2^{(i)}(\theta) \right\|_\infty \leq t  A_{2, \eps} \eps^2$ by Lemma \ref{lemma:zeta-remainder}.
We now examine $b_{t,2}^{(i)}$. 
Using the likelihood ratio $R_{t}^{\theta, \eps}$, we can rewrite the first expectation in the expression of $b_{t,2}^{(i)}$ \eqref{eq:bt-decomposition} as
\[
    \begin{aligned}
        &\mathbb E \left[ \mathbf 1_{Y_{t}^{\theta, \eps} = x^{(i)}} \sum_{s=0}^{t-1} \eps^{-1}  \Lambda_s \widehat{\nabla_\theta l_s^\theta}  \right] \\
        &\qquad = \mathbb E \left[ \mathbf 1_{X_t^\theta = x^{(i)}}  \sum_{s=0}^{t-1} \eps^{-1} \Lambda_s \widehat{\nabla_\theta l_s^\theta}   R_{t}^{\theta, \eps}\right] \\
        &\qquad = \mathbb E \left[ \mathbf 1_{X_t^\theta = x^{(i)}} \sum_{s=0}^{t-1} \eps^{-1} \Lambda_s \widehat{\nabla_\theta l_s^\theta}   \left(1 + \sum_{s^\prime = 0}^{t-1} \eps \nabla_l \log \psi(\theta, X^\theta_{s^\prime + 1}, s^\prime,X_{s^\prime}^\theta, \alpha_{s^\prime}^\theta, l_{s^\prime}^\theta) \cdot \Lambda_{s^\prime} + \zeta_t^{\theta, \eps} \right) \right]. \\
    \end{aligned}
\]
Now, observe that since $\Lambda_s$ is independent of $X_t^\theta$ and has zero mean, and that $\mathbb E \left[ \Lambda_s^{(i)} \Lambda_{s^\prime}^{(j)} \right] = \delta_{s,s^\prime} \delta_{i,j}$, we have
\[
\begin{aligned}
 \mathbb E \left[ \mathbf 1_{Y_{t}^{\theta, \eps} = x^{(i)}} \sum_{s=0}^{t-1} \eps^{-1} \Lambda_s \widehat{\nabla_\theta l_s^\theta} \right] =& \sum_{s=0}^{t-1} \mathbb E \left[ \mathbf 1_{X_t^\theta = x^{(i)}} \nabla_l \log \psi(\theta, X^\theta_{s + 1}, s,X_s^\theta, \alpha_s^\theta, l_s^\theta) \right] \mathbb E \left[ \widehat{\nabla_\theta l_s^\theta}  \right]  \\
&+ \mathbb E \left[ \mathbf 1_{X_t^\theta = x^{(i)}} \sum_{s=0}^{t-1} \eps^{-1} \zeta_\eps^{\theta, t} \Lambda_s \widehat{\nabla_\theta l_s^\theta}  \right].
\end{aligned}
\]
Hence, using Lemma \ref{lemma:zeta-remainder}, we can upper bound $b_{t,2}^{(i)}$ as
\[
\begin{aligned}
    \left\|b_{t,2}^{(i)}(\theta) \right\|_{\infty} &\leq \left\| \sum_{s = 0}^{t-1} \mathbb E \left[ \mathbf 1_{X_t^\theta = x^{(i)}} \nabla_l \log \psi(\theta, X_{s + 1}^\theta,s, X_s^\theta, \alpha_s^\theta, l_s^\theta) \right] \left(\mathbb E \left[ \widehat{\nabla_\theta l_s^\theta} \right] - \nabla_\theta l_s^\theta \right) \right\|_{\infty} \\
    & \quad + \eps \sum_{s=0}^{t-1} A_{1, \eps} \left\|\mathbb E \left[\widehat{\nabla_\theta l_s^\theta}\right] \right\|  \\
    & \leq \sum_{s = 0}^{t-1} \left\| \mathbb E \left[ \mathbf 1_{X_t^\theta = x^{(i)}} \nabla_l \log \psi(\theta, X_{s + 1}^\theta, s,X_s^\theta, \alpha_s^\theta, l_s^\theta) \right] \right\|_\infty  E_s(\theta, \eps) \\
    &\quad + \eps \sum_{s=0}^{t-1} A_{1, \eps} \left(E_s(\theta, \eps) + \left\| \nabla_\theta l_s^\theta \right\|\right). \\ 
\end{aligned}
\]
Notice that 
\[
    \left\| \mathbb E \left[ \mathbf 1_{X_t^\theta = x^{(i)}} \nabla_l \log \psi(\theta, X_{s + 1}^\theta, s,X_s^\theta, \alpha_s^\theta, l_s^\theta) \right] \right\|_\infty 
    \leq B_1| \cA | d^{\mathbf 1_{s < t - 1}},
\]
by a similar argument as in the proof of Lemma \ref{lemma:bound-gradient-logits}. 
Hence, we have the following recursive bound for $E_t(\theta, \eps)$,
\begin{equation}
    \label{eq:recursive-bound-Et-raw}
\begin{aligned}
 E_t (\theta, \eps) & \leq \sum_{i = 1}^d \frac{1}{\mathbb P (X_t^\theta = x^{(i)})} \left(\left\| b_{t,1}^{(i)} (\theta) \right\|_{\infty} + \left\| b_{t,2}^{(i)} (\theta) \right\|_{\infty} \right) \\
 & \leq \sum_{i=1}^d \frac{1}{\mathbb P (X_t^\theta = x^{(i)})} \Bigg( t L^\prime_p \frac{\eps}{2} + t A_{2, \eps} \eps^2 \\
 &  \quad + \sum_{s = 0}^{t-1} B_1 | \cA | d^{\mathbf 1_{s < t - 1}} E_s (\theta, \eps) + \eps \sum_{s=0}^{t-1} A_{1, \eps} \left( E_s (\theta, \eps) + \left\| \nabla_\theta l_s^\theta \right\| \right)\Bigg).\\
\end{aligned}
\end{equation}
We focus on the last term of this inequality, which we upper bound using Lemma \ref{lemma:bound-gradient-logits},
\[
\sum_{s=0}^{t-1} \left\| \nabla_\theta l_s^\theta \right\| \leq \sum_{s=0}^{t-1} \frac{3 C (\beta^s - 1)}{2(\beta - 1)} \leq \frac{3 C}{2(\beta - 1)} \sum_{s=0}^{t-1}\beta^s,
\]
where $\beta$ is defined in \eqref{eq:definition-beta}.
Hence, the recursive upper bound on $E_t(\theta, \eps)$ \eqref{eq:recursive-bound-Et-raw} can be written such that Lemma \ref{lemma:discrete-gronwall} can be applied with the following constants
\[
\begin{aligned}
    \tilde c_1 &=  \mathfrak s^\theta \left( A_{1, \eps} \eps + B_1 | \cA | \right), \quad 
    \tilde c_2 =  \mathfrak s^\theta B_1 | \cA | (d - 1), \\
    \tilde c_3 &= \mathfrak s^\theta \left(L^\prime_p \frac{\eps}{2} + A_{2, \eps} \eps^2 \right), \quad \tilde c_4 = \mathfrak s^\theta \left( A_{1, \eps} \eps \frac{3C}{2 (\beta - 1)} \right), \quad 
    \tilde c_5 = \beta, \quad, \tilde c_6 = 0, \\
    \text{where }& \mathfrak s^\theta \coloneq \sup_{t \geq 0} \sum_{i=1}^d \mathbb P (X_t^\theta = x^{(i)})^{-1} < \infty.
\end{aligned}
\]
Now, with these notations let $\beta_\eps \coloneq \frac{1}{2} ( 1 + \tilde c_1 + \sqrt{(1 + \tilde c_1)^2 + 4 \tilde c_2})$.
Since $ A_{1, \eps} \eps > 0$, we have $\beta_\eps > \beta = \tilde c_5$,
therefore
\[
E_t(\theta, \eps) \leq \frac{3 \tilde c_3 (\beta_\eps ^t - 1)}{2 (\beta_\eps - 1)} + \frac{3 \tilde c_4 (\tilde c_5 \beta_\eps^t - \tilde c_5^{t+1})}{2 (\beta_\eps - \tilde c_5)}.
\]
We can rewrite this upper bound in the following way to highlight the dependence on $\eps$ and $t$
\[
E_t (\theta, \eps) \leq \mathfrak s^\theta \left[ \frac{3 L_p^\prime}{4 (\beta - 1)}\eps + \frac{3 A_{2, \eps}}{2(\beta_\eps - 1)}\eps^2 \right] \left(\beta_\eps^t - 1\right) + \mathfrak s^\theta \frac{9 C A_{1, \eps} \beta}{4(\beta - 1)(\beta_\eps - \beta)}\eps \left(\beta_\eps^t - \beta^t \right).
\]
This gives the desired inequality \eqref{eq:bias-logit-gradient-estimator} with the following constants:
\begin{equation}
    \label{eq:constants-bias-logits}
    \begin{aligned}
    K_{1, \eps} &= \mathfrak s^\theta \frac{9 C A_{1, \eps} \beta}{4(\beta - 1)(\beta_\eps - \beta)}, \quad
    K_{2, \eps} = \mathfrak s^\theta \left[ \frac{3 L_p^\prime}{4 (\beta - 1)} + \frac{3 A_{2, \eps}}{2(\beta_\eps - 1)}\eps \right], \\
    \beta_\eps &= \frac{1}{2} \left(
            1 + \mathfrak s^\theta \left(  A_{1, \eps} \eps + B_1 |\mathcal A| \right) + \sqrt{
            \left[ 1 + \mathfrak s^\theta \left( A_{1, \eps} \eps + B_1 |\mathcal A| \right) \right]^2
            + 4 \mathfrak s^\theta B_1 |\mathcal A| (d - 1)}\right). \\
    \end{aligned}
\end{equation}
Now, if we assume that $ B_1 = 0$, then we simply have  
$b_t^{(i)} = \Delta_1^{(i)}$, hence 
\[
E_t(\theta, \eps) \leq \mathfrak s^\theta \frac{L^\prime_p}{2} t \eps, 
\]
which concludes the proof of Proposition \ref{proposition:bias-state-distribution-gradient}.
\end{proof}

\begin{proof}[Proof of Theorem \ref{theorem:main-bias-result}]
Assume that $B_1 > 0$. We decompose the bias as follows:
\[
\begin{aligned}
    &\left\| \mathbb E \left[\widehat{\nabla V}(\theta) \right] - \nabla_\theta V (\mu, \theta)\right\|_\infty \leq \Delta_{\mathrm{RF}}(\theta) + \Delta_{\mathrm M}(\theta), \\
    &\Delta_{\mathrm{RF}}(\theta) \coloneq \left\|\mathbb E \left[\sum_{t=0}^{T-1} \nabla_\theta \log \tilde p(\theta, \alpha_t^{\theta, \eps},t, Y_t^\theta, l_t^\theta + \eps \Lambda_t)G_t^{\theta, \eps} \right]- \mathrm{RF}(\theta) \right\|_\infty, \\
    &\Delta_{\mathrm M}(\theta) \coloneq \left\| \mathbb E \left[\sum_{t=0}^{T} \eps^{-1} \Lambda_t \widehat{\nabla_\theta l^\theta_t} G_t^{\theta, \eps} \right]  - (\mathrm{MD}(\theta) + \mathrm{MFD}(\theta)) \right\|_\infty.
\end{aligned}
\]
We first examine $\Delta_{\mathrm{RF}}(\theta)$.
We proceed in a similar way to the study of $b_{t,1}^{(i)}$ in the proof of Theorem \ref{proposition:bias-state-distribution-gradient}. 
We define $H_t^{\theta, \eps} \coloneq \sum_{s=t}^{T-1} \tilde r(Y_s^{\theta, \eps}, \alpha_s^{\theta, \eps}, l_s^{\theta}) + \tilde g(Y_T^{\theta, \eps}, l_T^{\theta})$.
Then
\[
\begin{aligned}
    \Delta_{\mathrm{RF}}(\theta) &\leq \Delta_{\mathrm{RF}}^{(1)}(\theta) + \Delta_{\mathrm{RF}}^{(2)}(\theta), \\
    \Delta_{\mathrm{RF}}^{(1)}(\theta) &\coloneq \left\| \mathbb E \left[\sum_{t=0}^{T-1} \left(\nabla_\theta \log \tilde p(\theta, \alpha_t^{\theta, \eps}, t,Y_t^\theta, l_t^\theta + \eps \Lambda_t)G_t^{\theta, \eps} - \nabla_\theta \log \tilde p (\theta, \alpha_t^{\theta, \eps}, t,Y_t^{\theta, \eps}, l_t^\theta)H_t^{\theta, \eps} \right)\right] \right\|_\infty, \\
    \Delta_{\mathrm{RF}}^{(2)}(\theta) &\coloneq \left\| \mathbb E \left[\sum_{t=0}^{T-1} \nabla_\theta \log \tilde p (\theta, \alpha_t^{\theta, \eps},t, Y_t^{\theta, \eps}, l_t^\theta)H_t^{\theta, \eps} \right] - \mathrm{RF}(\theta) \right\|_\infty.
\end{aligned}
\]
Since the reward functions and the log-policy are assumed to be Lipschitz in the measure argument, we have
\[
\begin{aligned}
 \Delta_{\mathrm{RF}}^{(1)}(\theta) &\leq \sum_{t=0}^{T-1} \left(L_{p}^\prime (T - t + 1) M_0 + \left((T - t)L_r + L_g \right)C\right) \mathbb E \left[ d_\tv(\mu_t^\theta, \mu_t^{\theta, \eps}) \right] \\
 & \leq \sum_{t=0}^{T-1} \left(L_{p}^\prime (T - t + 1) M_0 + \left((T - t)L_r + L_g \right)C\right) \frac{\eps}{2} \quad \quad \text{(by Lemma \ref{lemma:tv-bound})}\\
 &=  A_3\eps,
\end{aligned}
\]
where $A_3 = \left(M_0 L^\prime_p  \frac{T^2 + 3T}{2} + C L_r \frac{T^2 + T}{2} + C L_g T \right)/2$.
For $\Delta_{\mathrm{RF}}^{(2)}(\theta)$, we use yet again the likelihood ratio trick. Recall $R_T^{\theta, \eps}$ defined in \eqref{eq:likelihood-ratio-t} and its expansion in \eqref{eq:likelihood-ratio-expansion}.
We have
\[
\begin{aligned}
\Delta_{\mathrm{RF}}^{(2)}(\theta) &= \left\| \mathbb E \left[ (1 - R_T^{\theta, \eps})\sum_{t=0}^{T-1} \nabla_\theta \log \tilde p (\theta, \alpha_t^{\theta}, t,X_t^{\theta}, l_t^\theta) G_t^{\theta}  \right] \right\|_\infty \\
&= \left\| \mathbb E \left[\zeta_T^{\theta, \eps}\sum_{t=0}^{T-1} \nabla_\theta \log \tilde p (\theta, \alpha_t^{\theta}, t,X_t^{\theta}, l_t^\theta) G_t^{\theta} \right] \right\|_\infty, \\ 
\end{aligned}
\]
where $G_t^{\theta} = \sum_{s=t}^{T-1} \tilde r(X_s^{\theta}, \alpha_s^{\theta}, l_s^{\theta}) + \tilde g(X_T^{\theta}, l_T^{\theta})$. By similar arguments as in the proof of Lemma \ref{lemma:zeta-remainder},
we can get hold of a constant $A_{4,\eps} > 0$ depending only on $\eps, B_2, C, M_0, d, | \cA |, T$, bounded as $\eps \to 0$, such that
\[
 \Delta_{\mathrm{RF}}^{(2)}(\theta) = \left\| \mathbb E \left[\zeta_T^{\theta, \eps}\sum_{t=0}^{T-1} \nabla_\theta \log \tilde p (\theta, \alpha_t^{\theta},t, X_t^{\theta}, l_t^\theta) G_t^{\theta} \right] \right\|_\infty \leq A_{4, \eps} \eps^2.
\]
We now examine $\Delta_{\mathrm M}(\theta)$, which we decompose as
\[
\begin{aligned}
 \Delta_{\mathrm{M}}(\theta) &\leq \Delta_{\mathrm{MD}}(\theta) + \Delta_{\mathrm{MFD}}(\theta), \\
    \Delta_{\mathrm{MD}}(\theta) &\coloneq \left\| \mathbb E \left[\sum_{t=0}^{T} \eps^{-1} \Lambda_t \widehat{\nabla_\theta l^\theta_t} (G_t^{\theta, \eps} - H_t^{\theta, \eps}) \right] - \mathrm{MD}(\theta) \right\|_\infty, \\
    \Delta_{\mathrm{MFD}}(\theta) &\coloneq \left\| \mathbb E \left[\sum_{t=0}^{T} \eps^{-1} \Lambda_t \widehat{\nabla_\theta l^\theta_t} H_t^{\theta, \eps} \right] - \mathrm{MFD}(\theta) \right\|_\infty.
\end{aligned}
\]
Using the likelihood ratio $R_T^{\theta, \eps}$ we have
\[
\mathbb E \left[\sum_{t=0}^{T} \eps^{-1} \Lambda_t \widehat{\nabla_\theta l^\theta_t} (G_t^{\theta, \eps} - H_t^{\theta, \eps}) \right] = \mathbb E \left[\sum_{t=0}^{T} \eps^{-1} \Lambda_t \widehat{\nabla_\theta l^\theta_t} (\tilde G_t^{\theta, \eps} - G_t^{\theta}) R_T^{\theta, \eps} \right],
\]
where $\tilde G_t^{\theta, \eps} = \sum_{s=t}^{T-1} \tilde r(X_s^{\theta}, \alpha_s^{\theta}, l_s^{\theta} + \eps \Lambda_s) + \tilde g(X_T^{\theta}, l_T^{\theta} + \eps \Lambda_T)$.
Since $\tilde r, \tilde g$ are assumed to be twice differentiable by Assumption \ref{assumption:lipschitz-reward}, we can use a Taylor expansion to get hold of random variables 
$\chi^\theta_{s,j,k}$ for $s=0, \ldots, T$ and $j,k = 1, \ldots, d$ bounded by $M_2$ such that $\chi^\theta_{t,j,k}$ is $\sigma(X^\theta_t, \alpha_t^\theta, \Lambda_t)$-measurable when $t \leq T-1$ 
and $\sigma (X_T^\theta, \Lambda_T)$-measurable when $t = T$, and
\[
\tilde G_t^{\theta, \eps} = G_t^\theta + \eps \sum_{s = t}^{T-1} \nabla_l \tilde r(X_s^\theta, \alpha_s^\theta, l_s^\theta) \cdot \Lambda_s + \eps \nabla_l \tilde g(X_T^\theta, \alpha_T^\theta) \cdot \Lambda_T
 + \frac{\eps^2}{2} \sum_{s=t}^{T}\sum_{j,k=1}^d \chi^\theta_{s,j,k} \Lambda_s^{(j)} \Lambda_s^{(k)}.
\]
Hence, 
\begin{equation}
    \label{eq:decomposition-md-approx}
\begin{aligned}
    &\mathbb E \left[\sum_{t=0}^{T} \eps^{-1} \Lambda_t \widehat{\nabla_\theta l^\theta_t} (G_t^{\theta, \eps} - H_t^{\theta}) \right] \\
    &\qquad = \mathbb E \left[ \left\{\sum_{t=0}^{T} \Lambda_t \widehat{\nabla_\theta l^\theta_t} \left(\sum_{s=t}^{T-1}\nabla_l \tilde r(X_s^\theta, \alpha_s^\theta, l_s^\theta) \cdot \Lambda_s 
    + \nabla_l \tilde g(X_T^\theta, l_T^\theta) \cdot \Lambda_T\right)  \right\} R_T^{\theta, \eps}\right] \\
    & \qquad \quad + \frac{\eps}{2}\mathbb E \left[ \left\{\sum_{t=0}^{T} \Lambda_t \widehat{\nabla_\theta l^\theta_t} \sum_{s=t}^{T}\sum_{j,k=1}^d \chi^\theta_{s,j,k} \Lambda_s^{(j)} \Lambda_s^{(k)} \right\} R_T^{\theta, \eps}\right].
\end{aligned}
\end{equation}
The second term in the right-hand side is of order $\eps$ since all the random variables inside the expectation are bounded.
 Using similar arguments as in the proof of Lemma \ref{lemma:zeta-remainder}, one can get hold of a constant $A_{5,\eps}$ depending only on $\eps, M_2, B_1,B_2, C, d, | \cA |, T$, bounded as $\eps \to 0$ such that 
\[
    \left\| \frac{\eps}{2}\mathbb E \left[ \left\{\sum_{t=0}^{T} \Lambda_t \widehat{\nabla_\theta l^\theta_t} \sum_{s=t}^{T}\sum_{j,k=1}^d \chi^\theta_{s,j,k} \Lambda_s^{(j)} \Lambda_s^{(k)} \right\} R_T^{\theta, \eps}\right] \right\|_\infty 
    \leq A_{5,\eps} \eps.
\]
For the first term, we plug in the expansion of $R_T^{\theta, \eps}$ from \eqref{eq:likelihood-ratio-expansion} to get
\[
\begin{aligned}
&\mathbb E \left[ \left\{\sum_{t=0}^{T} \Lambda_t \widehat{\nabla_\theta l^\theta_t} \left(\sum_{s=t}^{T-1}\nabla_l \tilde r(X_s^\theta, \alpha_s^\theta, l_s^\theta) \cdot \Lambda_s 
    + \nabla_l \tilde g(X_T^\theta, l_T^\theta) \cdot \Lambda_T\right)  \right\} R_T^{\theta, \eps}\right] \\
    &\qquad = \mathbb E \left[ \sum_{t=0}^{T} \Lambda_t \widehat{\nabla_\theta l^\theta_t} \left(\sum_{s=t}^{T-1}\nabla_l \tilde r(X_s^\theta, \alpha_s^\theta, l_s^\theta) \cdot \Lambda_s 
    + \nabla_l \tilde g(X_T^\theta, l_T^\theta) \cdot \Lambda_T\right)   \right] + \mathcal O (\eps).
\end{aligned}
\]
Using the independence of $(\Lambda_t)_{t=0}^T$ and $(\widehat{\nabla_\theta l_t^\theta}, X_t^\theta, \alpha_t^\theta)_{t=0}^T$, as well as the fact that $\mathbb E[\Lambda_t^{(i)} \Lambda_s^{(j)}] = \delta_{t,s} \delta_{i,j}$, we have
\[
\begin{aligned}
 &\mathbb E \left[ \sum_{t=0}^{T} \Lambda_t \widehat{\nabla_\theta l^\theta_t} \left(\sum_{s=t}^{T-1}\nabla_l \tilde r(X_s^\theta, \alpha_s^\theta, l_s^\theta) \cdot \Lambda_s 
    + \nabla_l \tilde g(X_T^\theta, l_T^\theta) \cdot \Lambda_T\right)\right] \\
&\qquad = \sum_{t=0}^{T-1} \mathbb E \left[ \nabla_l \tilde r(X_t^\theta, \alpha_t^\theta, l_t^\theta) \right] \mathbb E \left[ \widehat{\nabla_\theta l^\theta_t} \right] 
+ \mathbb E \left[ \nabla_l \tilde g(X_T^\theta, l_T^\theta) \right] \mathbb E \left[ \widehat{\nabla_\theta l^\theta_T} \right].
\end{aligned}
\]
Once again, one can get hold of a constant $A_{6,\eps}$ depending only on $\eps, M_1, B_1, B_2, C, d, | \cA |, T$, bounded as $\eps \to 0$ such that
\[
\begin{aligned}
    \Delta_{\mathrm{MD}}(\theta) &\leq \sum_{t=0}^{T-1}  \left\|\mathbb E \left[ \nabla_l \tilde r(X_t^\theta, \alpha_t^\theta, l_t^\theta) \right] \left(\mathbb E \left[ \widehat{\nabla_\theta l^\theta_t} \right] - \nabla_\theta l_t^\theta\right)\right\|_\infty \\
    & \quad +\left\|\mathbb E \left[ \nabla_l \tilde g(X_T^\theta, l_T^\theta) \right] \left(\mathbb E \left[ \widehat{\nabla_\theta l^\theta_T} \right] - \nabla_\theta l_T^\theta\right)\right\|_\infty + (A_{5,\eps} + A_{6,\eps}) \eps.  
\end{aligned}
\]
Now, we examine $\Delta_{\mathrm{MFD}}(\theta)$.
Using the likelihood ratio $R_T^{\theta, \eps}$ again, we have
\[
\begin{aligned}
&\mathbb E \left[ \sum_{t=0}^T \eps^{-1} \Lambda_t \widehat{\nabla_\theta l_t^\theta} H_t^{\theta, \eps} \right] = \mathbb E \left[ \sum_{t=0}^T \eps^{-1} \Lambda_t \widehat{\nabla_\theta l_t^\theta} G_t^\theta R_t^{\theta, \eps} \right] \\
&\qquad = \mathbb E \left[ \sum_{t=0}^T \eps^{-1}\Lambda_t \widehat{\nabla_\theta l_t^\theta} G_t^\theta \right] + \mathbb E \left[ \sum_{t=0}^T \Lambda_t \widehat{\nabla_\theta l_t^\theta} G_t^\theta \sum_{s=0}^{T-1} \nabla_l \log \psi(\theta, X_{s+1}^\theta, s,X_s^\theta, \alpha_s^\theta, l_s^\theta) \cdot \Lambda_s \right] \\
& \qquad \quad + \mathbb E \left[ \sum_{t=0}^T \eps^{-1} \Lambda_t \widehat{\nabla_\theta l_t^\theta} G_t^\theta \zeta_T^{\theta, \eps} \right] \\
&\qquad = \sum_{t=0}^{T - 1}  \mathbb E \left[ \nabla_l \log \psi(\theta, X_{t+1}^\theta, t,X_t^\theta, \alpha_t^\theta, l_t^\theta) G_t^\theta \right] \mathbb E \left[\widehat{\nabla_\theta l_t^\theta} \right] + \mathcal O (\eps).  
\end{aligned}
\]
Once again, one can get hold of a constant $A_{7, \eps}$ depending only on $M_0, B_1, B_2, C, d, | \cA |, T$, bounded as $\eps \to 0$, such that
\[
\Delta_{\mathrm{MFD}}(\theta) \leq \sum_{t=0}^{T - 1}  \left\|\mathbb E \left[ \nabla_l \log \psi(\theta, X_{t+1}^\theta,t, X_t^\theta, \alpha_t^\theta, l_t^\theta) G_t^\theta \right] \left(\mathbb E \left[\widehat{\nabla_\theta l_t^\theta} \right] - \nabla_\theta l_t^\theta\right) \right\|_\infty + A_{7,\eps} \eps.
\]
Hence, the total bias can be upper bounded by
\[
\begin{aligned}
& \left\| \mathbb E \left[\widehat{\nabla V}(\theta) \right] - \nabla_\theta V (\mu, \theta)\right\|_\infty \\
& \qquad \leq \sum_{t=0}^{T-1} \left(\left\| \mathbb E \left[ \nabla_l \log \psi(\theta, X_{t+1}^\theta, t,X_t^\theta, \alpha_t^\theta, l_t^\theta) G_t^\theta \right] \right\|_\infty + \left\|\mathbb E \left[\nabla_l \tilde r(X_t^\theta, \alpha_t^\theta, l_t^\theta) \right] \right\|_\infty \right) E_t(\theta, \eps) \\
& \qquad \quad + \left\|\mathbb E \left[\nabla_l \tilde g(X_T^\theta, l_T^\theta) \right] \right\|_\infty E_T(\theta, \eps) + (A_3 + A_{5, \eps} + A_{6, \eps} + A_{7, \eps}) \eps + A_{4, \eps} \eps^2 \\
& \qquad \leq \sum_{t=0}^{T-1} \left(B_1 d | \cA | (T - t + 1) M_0 + M_1 \right)E_t(\theta, \eps) + M_1 E_T(\theta, \eps) + \tilde A_{1, \eps} \eps + \tilde A_{2, \eps} \eps^2,
\end{aligned}
\]
where $\tilde A_{1, \eps} = A_3 + A_{5, \eps} + A_{6, \eps} + A_{7, \eps}$ and $\tilde A_{2, \eps} = A_{4, \eps}$.
Using Proposition \ref{proposition:bias-state-distribution-gradient}, we can rewrite the upper bound in the following way

\[
\begin{aligned}
    \left\| \mathbb E \left[\widehat{\nabla V}(\theta) \right] - \nabla_\theta V (\mu, \theta)\right\|_\infty
    &\leq \eps \kappa_1 \sum_{t=0}^{T-1} (T - t + 1)\left( \left[ K_{1, \eps} + K_{2} \right]\beta_\eps^t - K_{1, \eps} \beta^t - K_{2} \right) \\
    & \quad + \eps \kappa_2\sum_{t=0}^{T} \left( \left[ K_{1, \eps} + K_{2} \right]\beta_\eps^t - K_{1, \eps} \beta^t - K_{2} \right) + \tilde A_{1, \eps} \eps + \tilde A_{2, \eps} \eps^2 \\
    &\leq \eps \kappa_1 \left[(K_{1, \eps} + K_{2}) \beta_\eps^T \frac{2 \beta_\eps - 1}{(\beta_\eps - 1)^2}
    - 2  K_{1, \eps} \beta^{T-1} - K_{2} \frac{T^2 + 3T}{2}\right] \\
    & \quad + \eps \kappa_2 \left[\left(K_{1, \eps} + K_{2}\right) \frac{\beta_\eps^{T + 1} - 1}{ \beta_\eps - 1} - K_{1, \eps} \beta^{T} - (T + 1) K_{2}\right] \\
    &\quad + \tilde A_{1, \eps} \eps + \tilde A_{2, \eps} \eps^2 \\
    & \leq \eps \beta_\eps^T \left[\kappa_1 (K_{1, \eps} + K_{2}) \frac{2 \beta_\eps - 1}{(\beta_\eps - 1)^2} + \kappa_2 \beta_\eps\frac{K_{1, \eps} + K_{2}}{\beta_\eps - 1} \right] \\
    & \quad - \eps \beta^T \left(2 \kappa_1 K_{1, \eps} + \kappa_2 K_{1, \eps} \right) - \eps T^2 \frac{\kappa_1 K_{2}}{2} - \eps T \frac{3 \kappa_1 + 2 \kappa_2}{2} K_{2} \\
    & \quad  - \eps \kappa_2 K_{2} + \tilde A_{1, \eps} \eps + \tilde A_{2, \eps} \eps^2,
\end{aligned}
\]
where $K_{1, \eps}, K_{2, \eps}$ are defined in \eqref{eq:constants-bias-logits} and 
$
    \kappa_1 = B_1 d | \cA | M_0, \kappa_2 = M_1.
$
This gives the upper bound \eqref{eq:theorem-bound-bias-pg-estimator} with the following constants
\begin{equation}
    \label{eq:constant-bias-pg-expression}
\begin{aligned}
    k_{1, \eps} & \coloneq \kappa_1 (K_{1, \eps} + K_{2, \eps}) \frac{2 \beta_\eps - 1}{(\beta_\eps - 1)^2} + \kappa_2 \beta_\eps\frac{K_{1, \eps} + K_{2, \eps}}{\beta_\eps - 1}, \\
    k_{2, \eps} & \coloneq 2 \kappa_1 K_{1, \eps} + \kappa_2 K_{1, \eps}, \\
    k_{3, \eps} & \coloneq - \frac{\kappa_1 }{2} K_{2,\eps} T^2 - \frac{(3 \kappa_1 + 2 \kappa_2)}{2} K_{2, \eps} T - \kappa_2 K_{2, \eps} + \tilde A_{1, \eps}, \\
    k_{4, \eps} & \coloneq \tilde A_{2, \eps},
\end{aligned}
\end{equation}
which concludes the proof.
\end{proof}

\subsubsection{Proofs on Estimation Errors}

\begin{proof}[Proof of Proposition \ref{proposition:l2-logit-gradient-estimator}]
As all the terms in the definition of $\widehat{\nabla _\theta l_t^\theta}$ \eqref{eq:logit-gradient-estimator} involve the same version of $\widehat{\nabla_\theta l_s^\theta}$ for $s < t$, we proceed by using the law of total variance. For any square-integrable random variable $Y$, we consider
\[
\var ( Y ) = \mathbb E \left[ \var_{t}(Y) \right] + \var \left( \mathbb E_{t} \left[ Y \right] \right),
\]
where $\var_{t}, \mathbb E_t$ are the conditional variance and conditional expectation respectively given $\widehat{\nabla_\theta l_s^\theta}$ for $0 \leq s < t$.
Fix $t \in \{0, \ldots, T\}$, $i \in \{1, \ldots, d\}$ and $j \in \{1, \ldots, D\}$.
We have 
\[
\begin{aligned}
    \var_t \left(\left(\widehat{\nabla_\theta l_t^\theta}\right)_{i,j}\right) &= 
    \frac{1}{n \mathbb P \left( X^\theta_t = x^{(i)} \right)^2} \var_t \left(  \sum_{s=0}^{t-1}  Q_{s,i,j}^{\theta, \eps, t} + S_{s,i,j}^{\theta, \eps, t} \right) \\
    & \leq \frac{1}{n \mathbb P \left( X^\theta_t = x^{(i)} \right)^2} \left( \sum_{s=0}^{t-1} \left( \var_t \left( Q_{s,i,j}^{\theta, \eps, t} + S_{s,i,j}^{\theta, \eps, t} \right) \right)^{1/2} \right)^2 \\
    & \leq \frac{1}{n \mathbb P \left( X^\theta_t = x^{(i)} \right)^2} \left( \sum_{s=0}^{t-1} \left(2 \var_t \left( Q_{s,i,j}^{\theta, \eps, t} \right) + 2 \var_t \left( S_{s,i,j}^{\theta, \eps, t} \right) \right)^{1/2} \right)^2 \\
    & \leq \frac{1}{n \mathbb P \left( X^\theta_t = x^{(i)} \right)^2} \left( \sum_{s=0}^{t-1}  \var_t \left( Q_{s,i,j}^{\theta, \eps, t} \right)^{1/2} + \var_t \left( S_{s,i,j}^{\theta, \eps, t} \right)^{1/2}  \right)^2,
\end{aligned}
\]
where
\[
    Q_{s,i,j}^{\theta, \eps, t} \coloneq \eps^{-1} \mathbf 1_{Y_{t}^{\theta, \eps, t} = x^{(i)}} \left(\Lambda_s^{t} \widehat{\nabla_\theta l^\theta_s}\right)_j,
     \quad S_{s,i,j}^{\theta, \eps, t} \coloneq \mathbf 1_{Y_{t}^{\theta, \eps, t} = x^{(i)}} \nabla_\theta \log \tilde p (\theta, \alpha_s^{\theta, \eps,t}, Y_s^{\theta, \eps,t}, l_s^\theta + \eps \Lambda_s^{t})_j,
\]
and we have used $\var_t (\sum_\iota X_\iota) \leq (\sum_\iota \var_t (X_\iota)^{1/2})^2$, and $\var_t (X_1 + X_2) \leq 2 \var_t (X_1) + 2 \var_t (X_2)$ for any square-integrable random variables $X_\iota$.
We have $\var_t ( S_{s,i}^{\theta, \eps, t} ) =  \var ( S_{s,i}^{\theta, \eps, t} ) \leq C^2$
since $\nabla_ \theta \log p$ is bounded by Assumption \ref{assumption:finite-state-space}.
For $\var_t ( Q_{s,i,j}^{\theta, \eps, t} )$, we have 
\[
\begin{aligned}
    \var_t \left( Q_{s,i,j}^{\theta, \eps, t} \right) &= \eps^{-2} \var_t \left( \sum_{i^\prime = 1}^d 
    \mathbf 1_{Y_t^{\theta, \eps, t} = x^{(i)}}\left(\Lambda_s^t\right)_{i^\prime} \left(\widehat{\nabla_\theta l_s^\theta}\right)_{i^\prime,j} \right) \\
    & \leq \eps^{-2}\left( \sum_{i^\prime=1}^d \var_t \left( \mathbf 1_{Y_t^{\theta, \eps, t} = x^{(i)}}\left(\Lambda_s^t\right)_{i^\prime} \left(\widehat{\nabla_\theta l_s^\theta}\right)_{i^\prime,j} \right)^{1/2} \right)^2.
\end{aligned}
\]
We use the independence of $\widehat{\nabla_\theta l_s^\theta}$ and $\mathbf 1_{Y_{t}^{\theta, \eps, t} = x^{(i)}} \Lambda_s^t$ to write, for all $i^\prime = 1, \ldots, d$,
\[
\begin{aligned}
    \var_t \left( \mathbf 1_{Y_t^{\theta, \eps, t} = x^{(i)}}\left(\Lambda_s^t\right)_{i^\prime} \left(\widehat{\nabla_\theta l_s^\theta}\right)_{i^\prime,j} \right) = \var \left(\mathbf 1_{Y_t^{\theta, \eps, t} = x^{(i)}}\left(\Lambda_s^t\right)_{i^\prime}\right) \left(\widehat{\nabla_\theta l_s^\theta}\right)^2_{i^\prime,j},
\end{aligned}
\]
hence
\[
\begin{aligned}
     \var_t \left( Q_{s,i,j}^{\theta, \eps, t} \right) \leq \eps^{-2} \left( \sum_{i^\prime = 1}^d \left|\widehat{\nabla_\theta l_s^\theta}\right|_{i^\prime,j}  \right)^2,
\end{aligned}
\]
where we have used that $\var ( \mathbf 1_{Y_t^{\theta, \eps, t} = x^{(i)}}\left(\Lambda_s^t\right)_{i^\prime} ) \leq \mathbb E [(\Lambda_s^t)_{i^\prime}^2 ] = 1$.
This implies
\begin{equation}
\label{eq:exp-of-cond-var-bound}
\begin{aligned}
    \mathbb E \left[ \var_t \left(\left(\widehat{\nabla_\theta l_t^\theta}\right)_{i,j}\right) \right]
    & \leq \frac{1}{\eps^{2}n \mathbb P \left(X_t^\theta = x^{(i)} \right)^2} \mathbb E \left[ \left(\sum_{s = 0}^{t-1} \sum_{i^\prime = 1}^d  \left|\widehat{\nabla_\theta l_s^\theta}\right|_{i^\prime,j}  + \eps C\right)^2 \right].
\end{aligned}
\end{equation}
Next, we have
\[
\mathbb E_t  \left[ \left(\widehat{\nabla_\theta l_t^\theta}\right)_{i,j} \right] 
= \frac{1}{\mathbb P \left(X_t^\theta = x^{(i)} \right)}\sum_{s=0}^{t-1}  \left(\sum_{i^\prime = 1}^d \E \left[ \eps^{-1} \mathbf 1_{Y_t^{\theta, \eps, k} = x^{(i)}} \left(\Lambda_s^t\right)_{i^\prime}\right] \left( \widehat{\nabla_\theta l_s^\theta} \right)_{i^\prime, j} + \E \left[ S_{s,i,j}^{\theta, \eps, t} \right] \right),
\]
hence
\[
\begin{aligned}
    &\var \left( \mathbb E_t  \left[ \left(\widehat{\nabla_\theta l_t^\theta}\right)_{i,j} \right] \right)
    = \frac{1}{\mathbb P \left(X_t^\theta = x^{(i)} \right)^2} \var \left( 
    \sum_{s=0}^{t-1}  \sum_{i^\prime = 1}^d \E \left[ \eps^{-1} \mathbf 1_{Y_t^{\theta, \eps, k} = x^{(i)}} \left(\Lambda_s^t\right)_{i^\prime}\right] \left( \widehat{\nabla_\theta l_s^\theta} \right)_{i^\prime, j} 
    \right) \\
    & \qquad \leq  \frac{1}{\eps^2\mathbb P \left(X_t^\theta = x^{(i)} \right)^2}\left(\sum_{s = 0}^{t-1} \var \left( \sum_{i^\prime = 1}^d \E \left[ \mathbf 1_{Y_t^{\theta, \eps, k} = x^{(i)}} \left(\Lambda_s^t\right)_{i^\prime}\right] \left( \widehat{\nabla_\theta l_s^\theta} \right)_{i^\prime, j}  \right)^{1/2}\right)^2.
\end{aligned}
\]
By writing $v^{\theta, \eps}_{s, i^\prime,j} = (\Sigma_s^{\theta, \eps})_{i^\prime, j}=\var ( (\widehat{\nabla_\theta l_s^\theta})_{i^\prime,j})$, we have
\[
\begin{aligned}
\var \left( \sum_{i^\prime = 1}^d \E \left[ \mathbf 1_{Y_t^{\theta, \eps, k} = x^{(i)}} \left(\Lambda_s^t\right)_{i^\prime}\right] \left( \widehat{\nabla_\theta l_s^\theta} \right)_{i^\prime, j}  \right) 
&\leq \left( \sum_{i^\prime = 1}^d \E \left[ \mathbf 1_{Y_t^{\theta, \eps, k} = x^{(i)}} \left(\Lambda_s^t\right)_{i^\prime}\right] \left(v_{s,i^\prime,j}^{\theta, \eps}\right)^{1/2} \right)^2 \\
&\leq \left( \sum_{i^\prime = 1}^d  \left(v_{s,i^\prime,j}^{\theta, \eps}\right)^{1/2} \right)^2,
\end{aligned}
\]
where we have used that $\mathbb E [ \mathbf 1_{Y_t^{\theta, \eps, t} = x^{(i)}}\left(\Lambda_s^t\right)_{i^\prime} ] \leq 1$.
Hence
\begin{equation}
\label{eq:var-of-cond-expectation-bound}
\begin{aligned}
    \var \left( \mathbb E_t  \left[ \left(\widehat{\nabla_\theta l_t^\theta}\right)_{i,j} \right] \right) 
    \leq \frac{1}{\eps^2\mathbb P \left(X_t^\theta = x^{(i)} \right)}\left( \sum_{s = 0}^{t-1}\sum_{i^\prime = 1}^d \sqrt{v^{\theta, \eps}_{s, i^\prime, j}} \right)^2.
\end{aligned}
\end{equation}
Combining the bounds \eqref{eq:exp-of-cond-var-bound} and \eqref{eq:var-of-cond-expectation-bound}, we get
\[
\begin{aligned}
    v^{\theta, \eps}_{s, i, j}
    &\leq  \frac{1}{\eps^2\mathbb P \left(X_t^\theta = x^{(i)} \right)} \left\{\left( \sum_{s = 0}^{t-1}\sum_{i^\prime = 1}^d \sqrt{v^{\theta, \eps}_{s, i^\prime, j}} \right)^2
    + \frac{1}{n} \mathbb E \left[ \left(\sum_{s = 0}^{t-1} \sum_{i^\prime = 1}^d  \left|\widehat{\nabla_\theta l_s^\theta}\right|_{i^\prime,j}  + \eps C\right)^2 \right] \right\} \\
    & \leq \frac{1}{\eps^2\mathbb P \left(X_t^\theta = x^{(i)} \right)} \left\{d T \sum_{s = 0}^{t-1}\sum_{i^\prime = 1}^d v^{\theta, \eps}_{s, i^\prime, j}
    + \frac{1}{n} \mathbb E \left[ \left(\sum_{s = 0}^{t-1} \sum_{i^\prime = 1}^d  \left|\widehat{\nabla_\theta l_s^\theta}\right|_{i^\prime,j}  + \eps C\right)^2 \right] \right\} \\
    & \leq \frac{1}{\eps^2\mathbb P \left(X_t^\theta = x^{(i)} \right)} \left\{d T \sum_{s = 0}^{t-1}\left\| \Sigma_s^{\theta, \eps} \right\|
    + \frac{1}{n} \mathbb E \left[ \left(\sum_{s = 0}^{t-1} \sum_{i^\prime = 1}^d  \left|\widehat{\nabla_\theta l_s^\theta}\right|_{i^\prime,j}  + \eps C\right)^2 \right] \right\}.
\end{aligned}
\]
\noindent
Note that, given the definition of $\widehat{\nabla_\theta l_t^\theta}$ \eqref{eq:logit-gradient-estimator} and Assumption \ref{assumption:finite-state-space}, it is clear that $\widehat{\nabla_\theta l_t^\theta}$ has finite second moment for a fixed $\eps > 0$.
By taking a sum over $i = 1, \ldots, d$, we get the recursive bound
$\| \Sigma_t^{\theta, \eps}\| \leq  c_1 \sum_{s=0}^{t-1}  \| \Sigma_s^{\theta, \eps} \| + c_6$, 
where
\begin{equation}
    \begin{aligned}
c_1 &\coloneq \frac{dT}{\eps^2} \sup_{0 \leq t \leq T} \sum_{i=1}^d \mathbb P \left( X_t^\theta = x^{(i)} \right)^{-2} > 0 , \\
c_6 &\coloneq \frac{1}{n \eps^2}\sup_{0 \leq t \leq T} \mathbb E \left[ \left(\sum_{s = 0}^{t-1} \sum_{i^\prime = 1}^d  \left|\widehat{\nabla_\theta l_s^\theta}\right|_{i^\prime,j}  + \eps C\right)^2 \right] \sum_{i=1}^d \mathbb P \left( X_t^\theta = x^{(i)} \right)^{-2}  \in (0,\infty).
\end{aligned}
\end{equation}
We can apply Lemma \ref{lemma:discrete-gronwall} with $c_2 = c_3 = c_4 = c_5  = 0$ to get
\[
\left\| \Sigma^{\theta, \eps}_t \right\| \leq \frac{\gamma_{1, \eps}}{n \eps^2}  \left(1 + \frac{\gamma_{2} T}{\eps^2}\right)^t ,
\]
where 
\begin{equation}
    \label{eq:constants-variance-logit-gradient-estimator}
    \begin{aligned}
    \gamma_{1, \eps} &= \frac{3}{2} \sup_{0 \leq t \leq T} \mathbb E \left[ \left(\sum_{s = 0}^{t-1} \sum_{i^\prime = 1}^d  \left|\widehat{\nabla_\theta l_s^\theta}\right|_{i^\prime,j}  + \eps C\right)^2 \right] \sum_{i=1}^d \mathbb P \left( X_t^\theta = x^{(i)} \right)^{-2} , \\
    \gamma_2 &= d \sup_{0 \leq t \leq T} \sum_{i=1}^d \mathbb P \left( X_t^\theta = x^{(i)} \right)^{-2}.
    \end{aligned}
\end{equation}
which concludes the proof. 
\end{proof}

\begin{proof}[Proof of Theorem \ref{theorem:main-mse-result}]
    Fix $j \in \{1, \ldots, D\}$. We use the bias-variance decomposition to write
    \[
\mathbb E \left[\left| \widehat{\nabla V}(\theta)_j - \nabla_\theta V (\mu, \theta)_j \right|^2 \right]  = \var \left( \widehat{\nabla V}(\theta)_j \right) + \left| \mathbb E \left[ \widehat{\nabla V}(\theta)_j \right] - \nabla_\theta V (\mu, \theta)_j \right|^2.
    \]
    The bias term can be upper bounded using Theorem \ref{theorem:main-bias-result}.
    We now focus on the variance term. We have, following the same reasoning as in the proof of Proposition \ref{proposition:l2-logit-gradient-estimator}
    \[
\begin{aligned}
    \var \left[\widehat{\nabla V}(\theta)_j\right] &= 
    \frac{1}{N} \var \left(  \sum_{t=0}^{T}  \tilde Q_{t,j}^{\theta, \eps} + \tilde S_{t,j}^{\theta, \eps} \right) \\
    & \leq \frac{1}{N} \left( \sum_{t=0}^{T} \left( \var \left( \tilde Q_{t,j}^{\theta, \eps} + \tilde S_{t,j}^{\theta, \eps} \right) \right)^{1/2} \right)^2 \\
    & \leq \frac{1}{N} \left( \sum_{t=0}^{T} \left(2 \var \left( \tilde Q_{t,j}^{\theta, \eps} \right) + 2 \var \left( \tilde S_{t,j}^{\theta, \eps} \right) \right)^{1/2} \right)^2,
\end{aligned}
\]
where 
\[
\tilde Q_{t,j}^{\theta, \eps} \coloneq \eps^{-1} G_t^{\theta, \eps} \left(\Lambda_t \widehat{\nabla_\theta l^\theta_t}\right)_j,
\quad \tilde S_{t,j}^{\theta, \eps}  \coloneq \mathbf 1_{t < T} G_t^{\theta, \eps} \nabla_\theta \log \tilde p (\theta, \alpha_t^{\theta, \eps}, t,Y_t^{\theta, \eps}, l_t^\theta + \eps \Lambda_t)_j,
\]
We have
$\var \left( \tilde S_{t,j}^{\theta, \eps} \right) \leq C^2 (T + 1)^2 M_0^2$
since $\nabla_\theta \log p$ is bounded and $|G_t^{\theta, \eps}| \leq (T + 1)M_0$ by Assumption \ref{assumption:finite-state-space}.
For $\var (\tilde Q_{t,j}^{\theta, \eps})$, we have, for all $i^\prime = 1, \ldots, d$,
\[
\begin{aligned}
    \var \left( G_t^{\theta, \eps}\left(\Lambda_t\right)_{i^\prime} \left(\widehat{\nabla_\theta l_t^\theta}\right)_{i^\prime,j} \right) 
    &= \mathbb E \left[ G_t^{\theta, \eps}\left(\Lambda_t\right)_{i^\prime} \right]^2 \var \left[ \left(\widehat{\nabla_\theta l_t^\theta}\right)_{i^\prime,j} \right] \\
    & \quad + \mathbb E \left[ \left(\widehat{\nabla_\theta l_t^\theta} \right)_{i^\prime, j}\right]^2 \var \left( G_t^{\theta, \eps}\left(\Lambda_t\right)_{i^\prime} \right) \\
    & \quad + \var \left[ \left(\widehat{\nabla_\theta l_t^\theta}\right)_{i^\prime,j} \right] \var \left( G_t^{\theta, \eps}\left(\Lambda_t\right)_{i^\prime} \right). 
\end{aligned}
\]
Notice that
\[
\mathbb E \left[   G_t^{\theta, \eps}\left(\Lambda_t\right)_{i^\prime}\right]^2 \leq  (T + 1)^2 M_0^2,
\quad \var \left( G_t^{\theta, \eps}\left(\Lambda_t\right)_{i^\prime} \right) \leq (T + 1)^2 M_0^2.
\] 
Recall that $v^{\theta, \eps}_{t, i^\prime,j} = \var \left[ \left(\widehat{\nabla_\theta l_t^\theta}\right)_{i^\prime,j} \right]$, we have thus shown that
\[
\begin{aligned}
\var \left( \tilde Q_{t,j}^{\theta, \eps, t} \right) &\leq \eps^{-2} (T + 1)^2 M_0^2 \left(\sum_{i^\prime=1}^d \left(2 v^{\theta, \eps}_{t, i^\prime,j} + \mathbb E \left[ \left(\widehat{\nabla_\theta l_t^\theta} \right)_{i^\prime, j}\right]^2 \right)^{1/2} \right)^2 \\
& \leq \eps^{-2}  (T + 1)^2 M_0^2 \left( \sum_{i^\prime = 1}^{d} \sqrt{v^{\theta, \eps}_{t,i^\prime, j}}  + \frac{1}{2}\left(E_t(\theta, \eps) + \left\| \nabla_\theta l_t^\theta \right\| \right)\right)^2,
\end{aligned}
\]
where $E_t(\theta, \eps)$ is defined in \eqref{eq:bias-notation}.
Therefore, we get
\[
\begin{aligned}
   \var \left[\widehat{\nabla V}(\theta)_j\right] & \leq \frac{2 M_0^2}{N} \left( 
        \sum_{t=0}^{T} (T + 1) \left(\eps^{-1} \left( \sum_{i^\prime = 1}^{d} \sqrt{v^{\theta, \eps}_{t,i^\prime, j}} 
        + E_t(\theta, \eps) / 2 + \left\| \nabla_\theta l_t^\theta \right\| / 2 \right) + C \right)
        \right)^2 \\
        & \leq \frac{2 (T+1)^3 M_0^2}{N}  \left( 
        \sum_{t=0}^{T}  2 \eps^{-2} \left(\sum_{i^\prime = 1}^{d} \sqrt{v^{\theta, \eps}_{t,i^\prime, j}}\right)^2
        + \left(\eps^{-1}\left(E_t(\theta, \eps) + \left\| \nabla_\theta l_t^\theta \right\|\right) + C\right)^2 
        \right) \\ 
        &\leq \frac{2 (T+1)^3 M_0^2}{N}  \left( 
        \sum_{t=0}^{T}  2 \eps^{-2} d \left\| \Sigma_s^{\theta, \eps} \right\|
        +  \left(\eps^{-1}\left(E_t(\theta, \eps) + \left\| \nabla_\theta l_t^\theta \right\|\right) +  C\right)^2 
        \right) \\
        & \leq  \frac{\delta_1}{N \eps^2} \sum_{t=0}^{T}  \left\| \Sigma_t^{\theta, \eps} \right\| + \frac{\delta_{2, \eps}}{N \eps^2},
\end{aligned}
\]
where
\[
\begin{aligned}
\delta_1 = 4 (T + 1)^3 M_0^2 d, \quad \delta_{2, \eps} = 2 (T + 1)^3 M_0^2 \sum_{t = 0}^T \left(E_t(\theta, \eps) + \left\| \nabla_\theta l_t^\theta \right\| +  \eps C\right)^2.
\end{aligned}
\]
 Therefore, we have the following bound on the MSE
\[
\begin{aligned}
    \mathrm{MSE}(\theta, N, n, \eps) &\leq   \frac{\delta_1}{N \eps^2} \sum_{t=0}^{T}  \left\| \Sigma_t^{\theta, \eps} \right\| + \frac{\delta_{2, \eps}}{N \eps^2} + \eps^{2}\left( k_{1, \eps} \beta_\eps^T - k_{2, \eps} \beta^T + k_{3, \eps} + k_{4, \eps} \eps \right)^2 \\
    & \leq \frac{\delta_1 \gamma_{1, \eps}}{nN \eps^4} \sum_{t=0}^T  \left(1 + \frac{\gamma_{2} T}{\eps}\right)^t + \frac{\delta_{2, \eps}}{N \eps^2} + \eps^2 \left( k_{1} \beta_\eps^T - k_{2} \beta^T + k_3 + k_4 \eps \right)^2 \\
    & \leq \frac{\delta_1 \gamma_{1, \eps}}{nN \eps^4} \frac{\left(1 + \gamma_2 T / \eps\right)^{T+1} - 1}{\gamma_2 T / \eps}  + \frac{\delta_{2, \eps}}{N \eps^2} + \eps^2\left( k_{1}  \beta_\eps^T - k_{2} \beta^T + k_3 + k_4 \eps \right)^2. \\ 
\end{aligned}
\]
Notice that, for $\delta > 0$
\[
\frac{(1 + \delta)^{T + 1} - 1}{\delta} = \sum_{t=1}^{T+1} \delta^{t-1} \binom{T+1}{t} = \sum_{t=0}^T \delta^t \binom{T + 1}{t + 1},
\]
Hence we get 
\[
\mathrm{MSE}(\theta, N, n, \eps) \leq \mathcal K_{1, \eps} \eps^2 + \frac{\mathcal K_{2, \eps}}{ N \eps^2} + \frac{\mathcal K_{3, \eps}}{n N \eps^4}  \sum_{t = 0}^T  \mathcal{C}_t\eps^{-t},
\]
with the following constants
\[
\begin{aligned}
  \mathcal K_{1, \eps} &= \left( k_{1,\eps}  \beta_\eps^T - k_{2,\eps} \beta^T + k_{3,\eps} + k_{\eps} \eps \right)^2, \\ 
  \mathcal K_{2, \eps} &= \delta_{2, \eps} =  2 (T + 1)^3 M_0^2 \sum_{t = 0}^T \left(E_t(\theta, \eps) + \left\| \nabla_\theta l_t^\theta \right\| +  \eps C\right),\\
  \mathcal K_{3,\eps} & = \delta_1 \gamma_{1, \eps} = 6 (T + 1)^3 M_0^2 d  \sup_{0 \leq t \leq T} \mathbb E \left[ \left(\sum_{s = 0}^{t-1} \sum_{i^\prime = 1}^d  \left|\widehat{\nabla_\theta l_s^\theta}\right|_{i^\prime,j}  + \eps C\right)^2 \right] \sum_{i=1}^d \mathbb P \left( X_t^\theta = x^{(i)} \right)^{-2}, \\
  \mathcal C_t &= \binom{T + 1}{t + 1}\left( \gamma_2 T \right)^t = \binom{T + 1}{t + 1}\left(   d T \sup_{0 \leq t \leq T} \sum_{i=1}^d \mathbb P \left( X_t^\theta = x^{(i)} \right)^{-2} \right)^t,
\end{aligned}
\]
which concludes the proof.
\end{proof}

\begin{small} 

\bibliographystyle{plain}
\bibliography{references}

\end{small} 

\end{document}